\DeclareMathOperator{\curl}{curl}
\newcommand{\R}{\mathbb{R}}
\newcommand{\Aa}{\mathbf A}
\newcommand{\Ab}{\mathbf A}
\newcommand{\Bb}{\mathbf B}
\newcommand{\Fb}{\mathbf F}
\newtheorem{thm}{Theorem}[section]
\newtheorem{prop}[thm]{Proposition}
\newtheorem{lem}[thm]{Lemma}
\newtheorem{corol}[thm]{Corollary}
\newtheorem{lemma}[thm]{Lemma}
\newtheorem{rem}[thm]{Remark}
\numberwithin{equation}{section}
\title[Transition to normal phase]
{On the transition to the normal phase for superconductors surrounded by normal conductors}
\author[S. Fournais]{S\o ren Fournais}
\address{(S. Fournais and A. Kachmar) Department of Mathematical Sciences, University
  of Aarhus, Ny Munkegade, Building
  1530, DK-8000 \AA rhus C}
\email[S. Fournais]{fournais@imf.au.dk} \email[A.
Kachmar]{ayman.kachmar@math.u-psud.fr}
\author[A. Kachmar]{Ayman Kachmar}
\subjclass[2000]{Primary 35Q55, Secondary 35J25, 35A15, 58E50, 81Q10}
\keywords{Ginzburg-Landau type equations, 
Schr\"o\-dinger operator with magnetic field, semiclassical
analysis}
\begin{document}

\begin{abstract}
For a cylindrical superconductor surrounded by a normal material, we
discuss transition to the normal phase of stable, locally stable and
critical configurations. Associated with those phase transitions, we
define critical magnetic fields and  we provide a sufficient
condition for which those critical fields coincide. In particular,
when the conductivity ratio of the superconducting and the normal
material is large, we show that the aforementioned critical magnetic
fields coincide, thereby proving that the transition to the normal
phase is sharp. One key-ingredient in the paper is the analysis of
an elliptic boundary value problem involving `transmission' boundary
conditions. Another key-ingredient involves a monotonicity result
(with respect to the magnetic field strength)  of the first eigenvalue of a
magnetic Schr\"odinger operator with discontinuous coefficients.
\end{abstract}
\maketitle
\tableofcontents

\section{Introduction}
A type~II superconductor placed in an applied magnetic field
undergoes three phase transitions: When the intensity of the applied
field is below a first critical value $H_{C_1}$, the sample exhibits
the Meissner effect and remains in a superconducting phase. When the
field is increased further past $H_{C_1}$, the sample is in a mixed
state and the magnetic field penetrates the material in quantized
vortices. Increasing the field further past another critical value
$H_{C_2}$, the sample exhibits surface superconductivity, and when
the field is increased above $H_{C_3}$, superconductivity is lost
and the sample is in the normal phase. The above picture is
rigorously established for extreme type~II materials through the
minimization of the Ginzburg-Landau functional, see for instance the
papers \cite{FoHe, FoKa, HePa,  LuPa, Pa, Se} and the books
\cite{FoHe07, SaSe} for  results and additional references
concerning the subject.

In addition to the phase transitions associated with minimizers
(stable states) of the Ginzburg-Landau functional, type~II materials
posses hysteresis associated with local minimizers (locally stable
states) of the energy. For instance, a locally stable state that
does not posses vortices will remain locally stable in increasing
magnetic fields up to a {\it super-heating field}, and a similar
phenomenon is associated with a {\it sub-cooling field} associated
with decreasing applied fields (see \cite{Se99}). It is therefore
natural to address a similar question when dealing with the
transition of normal states: Does local stability persists for the
normal state below $H_{C_3}$, and for the superconducting state
above $H_{C_3}$, or will there be hysteresis? That is, we ask
whether the transition from the superconducting phase to the normal
phase happens at a sharp
critical value of the magnetic field ($=H_{C_3}$).

For type~II superconducting samples with smooth boundaries and
surrounded by the vacuum, Fournais and Helffer \cite{FoHe} showed
that the transition is indeed sharp. Hysteresis is excluded in
\cite{FoHe07}. The case of domains with corners \cite{BoFo} and the
$3$-dimensional case \cite{FoHe, LuPa1} (see also \cite{HeMo2,LuPa1}) have also been studied. The
reason for the sharp transition is essentially the monotonicity of
the first eigenvalue $\mu_1(B)$ of the Neumann Schr\"odinger
operator
$$-(\nabla-iB\mathbf F)^2\quad{\rm in}~\Omega,\quad\Omega\subset\mathbb R^d\quad(d=2,3)\,,$$
for large values of the magnetic field, a property known as strong
diamagnetism (see \cite{FoHe, FoHe06}). Here $\mathbf
F:\overline\Omega\to\mathbb R^d$ is a vector field such that ${\rm
  curl}\,\mathbf F$ is a constant.

In this paper, we address the same question---the
transition from superconducting to normal phase---but for a
superconductor surrounded by normal materials. It is well known from the
superconducting proximity effect (see \cite{deGe, deGe1}) that the
presence of a normal material exterior to a superconductor allows
the superconducting electron Cooper pairs to flow into the normal
material in a narrow boundary layer. The characteristic length scale
of that layer is called the `extrapolation length'. To model this
phenomenon,  one has to consider a generalized Ginzburg-Landau
theory where the order parameter and the magnetic potential are not only
defined in the superconducting material but also in the normal
material surrounding it.

For a cylindrical superconducting sample of cross
section $\Omega\subset\mathbb R^2$, surrounded by a normal material
and placed in a magnetic field parallel to the cylinder axis, the
Gibbs free energy is given by the following Ginzburg-Landau type
functional (see \cite{Chetal}):
\begin{align}\label{GL-func}
\mathcal G(\psi,\Ab)=\mathcal G_{\kappa, H}(\psi,\Ab)&=\int_\Omega\left(|\nabla_{\kappa
H\Ab}\psi|^2-\kappa^2|\psi|^2+\frac{\kappa^2}2|\psi|^4\right)\, d 
x\\
&\quad+\int_{\widetilde\Omega\setminus\Omega}\left(\frac1m|\nabla_{\kappa
H\Ab}\psi|^2+a\kappa^2|\psi|^2\right) d  x\nonumber\\
&\quad+(\kappa H)^2\int_{\widetilde\Omega}|{\rm curl}\, \Ab-1|^2\, d 
x\,.\nonumber
\end{align}
Here we use the notation,
\begin{align}
  \label{eq:21}
  \nabla_{\Ab} = \nabla - i \Ab\,,
\end{align}
for the magnetic gradient.

In the above functional, $\widetilde\Omega\subset\mathbb R^2$ is a
bounded domain such that $\overline\Omega\subset\widetilde\Omega$ and
$\widetilde\Omega\setminus\Omega$ is the cross section of the normal
material\,\footnote{In part of the existing literature on the subject
  (see for example \cite{Kach, Chetal}) $\widetilde\Omega$ is
  taken to be all of $\mathbb R^2$. However, in the original paper of
\cite{deGeHu}, the normal material (i.e.
$\widetilde\Omega\setminus\Omega$) has been taken to be bounded. We
take the latter point of view, in order to avoid certain technical
difficulties which are unimportant for our present purpose. The
functional analysis necessary to overcome those difficulties is
developed in \cite{Gi}.}, the Ginzburg-Landau parameter $\kappa>0$ is a characteristic of the
superconducting material (the ratio of two characteristic lengths), $H>0$ is the intensity of the applied
magnetic field, $a>0$ is a characteristic of the normal material
that depends on the temperature and its sign signifies that the
temperature is above the critical temperature of the normal
material. Finally,  $m>0$ is the conductivity ratio of the
superconducting and normal materials. Minimization of the functional
(\ref{GL-func}) will take place over finite-energy configurations
$(\psi,\Ab)\in H^1(\widetilde\Omega;\mathbb C) \times
H^1(\widetilde\Omega;\mathbb R^2)$. Starting from a minimizing
sequence, it is quite standard to prove the existence of minimizers of
\eqref{GL-func}, see 
\cite{Gi}.

{\bf We will always assume that $\Omega$ and $\widetilde \Omega$ are
  smooth, bounded and simply connected}.

Notice that the normal state $(0,\mathbf F)$, with $\mathbf F$ being
the unique vector field satisfying
\begin{equation}\label{F}
{\rm curl}\,\mathbf F=1\,,\quad{\rm div}\,\mathbf F=0\quad{\rm
  in~}\widetilde\Omega\,,
\quad \nu\cdot\nabla\mathbf F=0\quad{\rm on}~\partial\widetilde\Omega \,,
\end{equation}
is  a critical point of the functional (\ref{GL-func}). It can also
be shown that this state is the unique normal state up to a {\it
gauge transformation} (see \cite{Gi}). Configurations which are
gauge equivalent to the normal state $(0,\mathbf F)$ will be called
{\it trivial} throughout the paper.

Defining the set,
\begin{equation}\label{c-giorgi}
\mathcal N^{\rm sc}(a,m;\kappa)=\{H>0~:~ \mathcal
G_{\kappa, H} {\rm ~has~ non\text{-}trivial ~critical ~points}\}\,,
\end{equation}
then it is  known from \cite[Theorem~4.7]{Gi} (see also
Theorem~\ref{thm-GP-type} in the present paper) that the above set
is bounded.

In connection with stability and local stability of
the normal state $(0,\mathbf F)$, we also
introduce the two sets:
\begin{equation}\label{HC3-loc}
\begin{split}
\mathcal N(a,m;\kappa)&=\{H>0~:~\mathcal G_{\kappa,H}~{\rm has~a~non\text{-}trivial~minimizer}\}\,,\\
\mathcal  N^{\rm
loc}(a,m;\kappa)&=\{H>0~:~\mu^{(1)}(\kappa,H)<0\}\,.
\end{split}
\end{equation}
Here $\mu^{(1)}(\kappa,H)$ is the ground state energy of the quadratic
form
\begin{equation}\label{qf}
\mathcal Q[\kappa,H](\phi)=\int_{\Omega}\left(|\nabla_{\kappa
H\mathbf F}\phi|^2-\kappa^2|\phi|^2\right)dx+
\int_{\widetilde\Omega\setminus\Omega}\left(\frac1{m}|\nabla_{\kappa H\mathbf
F}\phi|^2+a\kappa^2|\phi|^2\right)dx\,,
\end{equation}
i.e.
\begin{equation}\label{EV}
\mu^{(1)}(\kappa,H)=\inf_{\substack{\phi\in
    H^1(\widetilde\Omega)\\\phi\not=0}}\left(\frac{\mathcal
Q[\kappa,H](\phi)}{\|\phi\|_{L^2(\widetilde\Omega)}^2}\right)\,.
\end{equation}
Since $\mathcal Q[\kappa,H]$ defines the Hessian of the functional
$\mathcal G_{\kappa,H}$ at  the normal state $(0,\mathbf F)$, we see
that  if $H\in \mathcal N^{\rm loc}(a,m;\kappa)$, then $(0,\mathbf
F)$ is not a local minimizer $\mathcal G_{\kappa,H}$. Hence we
obtain the following inclusion,
$$\mathcal N^{\rm loc}(a,m;\kappa)\subset\mathcal N(a,m;\kappa)\,. $$
On the other hand,  the following  inclusion is trivial,
\begin{equation}\label{eq-inclusion-hc3}
\mathcal N(a,m;\kappa)\subset\mathcal N^{\rm sc}(a,m;\kappa)\,.
\end{equation}

One of the main results of the present paper is the following.

\begin{thm}\label{thm-main}
Given $a>0$ and $m>1$, there exists $\kappa_0>0$ such that, for all
$\kappa\geq\kappa_0$, the following equalities hold,
$$\mathcal N^{\rm sc}(a,m;\kappa)=
\mathcal N^{\rm loc}(a,m;\kappa)=\mathcal N(a,m;\kappa)\,.
$$
\end{thm}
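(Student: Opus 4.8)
The plan is to establish the single missing inclusion $\mathcal N^{\rm sc}(a,m;\kappa)\subseteq\mathcal N^{\rm loc}(a,m;\kappa)$ for all large $\kappa$; together with the inclusions $\mathcal N^{\rm loc}\subseteq\mathcal N\subseteq\mathcal N^{\rm sc}$ noted above, this gives the three equalities. For a vector field $\mathbf a$ on $\widetilde\Omega$ set
\[
q_{\mathbf a}[\phi]=\int_{\Omega}\big(|\nabla_{\kappa H\mathbf a}\phi|^2-\kappa^2|\phi|^2\big)\,\md x+\int_{\widetilde\Omega\setminus\Omega}\Big(\tfrac1m|\nabla_{\kappa H\mathbf a}\phi|^2+a\kappa^2|\phi|^2\Big)\,\md x,
\]
so that $q_{\Fb}=\mathcal Q[\kappa,H]$ and $\mu^{(1)}(\kappa,H)=\inf_{\phi\ne0}q_{\Fb}[\phi]/\|\phi\|_{L^2(\widetilde\Omega)}^2$. \emph{Step 1 (structure of $\mathcal N^{\rm loc}$).} I would first show that, for $\kappa$ large, $\mathcal N^{\rm loc}(a,m;\kappa)=\big(0,\overline H(\kappa)\big)$ with $(1+\delta)\kappa\le\overline H(\kappa)<\infty$ for some $\delta=\delta(a,m)>0$. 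Inserting into $q_{\Fb}$ a trial function localized in an $O\big((\kappa H)^{-1/2}\big)$-tube about $\partial\Omega$ and modelled on the ground state of the one-dimensional transmission operator attached to $\partial\Omega$ — whose ground-state energy $\Theta=\Theta(a,m)$ is $<1$ — gives $\mu^{(1)}(\kappa,H)\le(\Theta\,\kappa H-\kappa^2)(1+o(1))<0$ for $H\le(1-\epsilon)\kappa/\Theta$, so $\overline H(\kappa)\ge(1+\delta)\kappa$ for $\kappa$ large; finiteness of $\overline H(\kappa)$ is Theorem~\ref{thm-GP-type} applied via $\mathcal N^{\rm loc}\subseteq\mathcal N^{\rm sc}$. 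That $\{\,\mu^{(1)}(\kappa,\cdot)<0\,\}$ is actually an interval is where the monotonicity ingredient of the abstract enters: for $\kappa$ large, $\partial_H\mu^{(1)}(\kappa,H)>0$ whenever $\mu^{(1)}(\kappa,H)=0$, so the continuous function $H\mapsto\mu^{(1)}(\kappa,H)$, negative near $0$, changes sign only once. The hypothesis $m>1$ is used in this step.

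\emph{Step 2 (a priori bounds on a critical point).} Let $(\psi,\Ab)$ be a non-trivial critical point of $\mathcal G_{\kappa,H}$. From the first Ginzburg--Landau equation and the maximum principle in $\Omega$, $\|\psi\|_{L^\infty(\widetilde\Omega)}\le1$, and $\psi|_\Omega\not\equiv0$ (otherwise the transmission conditions and unique continuation in the normal region would force $\psi\equiv0$, i.e.\ triviality). Testing the first equation against $\psi$ and using the transmission conditions gives the energy identity
\[
q_{\Ab}[\psi]=-\kappa^2\int_\Omega|\psi|^4\,\md x,
\]
whence $\|\nabla_{\kappa H\Ab}\psi\|_{L^2(\widetilde\Omega)}^2\le m\kappa^2\|\psi\|_{L^2(\Omega)}^2$. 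The second Ginzburg--Landau equation reads $\kappa H\,\curl^{*}(\curl\Ab-1)=\tilde{\mathbf j}_\Ab$, with $\tilde{\mathbf j}_\Ab$ the $m$-weighted supercurrent; pairing with $\Ab-\Fb$ in the Coulomb gauge (using that $\widetilde\Omega$ is simply connected, so $\|\Ab-\Fb\|_{H^1}\le C\|\curl\Ab-1\|_{L^2}$), together with $|\tilde{\mathbf j}_\Ab|\le|\psi|\,|\nabla_{\kappa H\Ab}\psi|$ and Hölder's inequality, yields
\[
\|\curl\Ab-1\|_{L^2(\widetilde\Omega)}\le\frac{C}{\kappa H}\,\big\|\,|\psi|\,|\nabla_{\kappa H\Ab}\psi|\,\big\|_{L^{4/3}(\widetilde\Omega)}\le\frac{C\sqrt m}{H}\,\|\psi\|_{L^4(\widetilde\Omega)}\|\psi\|_{L^2(\Omega)};
\]
elliptic regularity for the transmission problem (the first key ingredient) upgrades this to $\|\curl\Ab-1\|_{L^\infty}\le C/H$, so $\curl\Ab\to1$ uniformly as $H\to\infty$.

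\emph{Step 3 (concentration) and conclusion.} Take $\kappa$ large and $H\ge\overline H(\kappa)$, i.e.\ $\mu^{(1)}(\kappa,H)\ge0$; then $H\ge(1+\delta)\kappa$, so $\kappa H\ge(1+\delta)\kappa^2>\kappa^2$. If $(\psi,\Ab)$ were a non-trivial critical point, then since $\curl\Ab\approx1$ by Step~2 the operator $-\nabla_{\kappa H\Ab}^2-\kappa^2$ is positive in the bulk of $\Omega$ and $-\tfrac1m\nabla_{\kappa H\Ab}^2+a\kappa^2$ is positive on $\widetilde\Omega\setminus\Omega$; Agmon-type estimates in $\Omega$, together with exponential-decay/trace estimates carrying the decay across $\partial\Omega$ into the normal material, confine $\psi$ (up to an exponentially small error) to an $O\big((\kappa H)^{-1/2}\big)$-neighbourhood of $\partial\Omega$, giving $\|\psi\|_{L^2(\widetilde\Omega)}^2\le C(\kappa H)^{-1/4}\|\psi\|_{L^4(\Omega)}^2$ and $\|\psi\|_{L^4(\widetilde\Omega)}^4\le C\|\psi\|_{L^4(\Omega)}^4$. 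The gauge-covariance identity
\[
q_{\Fb}[\psi]=q_{\Ab}[\psi]+2(\kappa H)^2\|\curl\Ab-1\|_{L^2(\widetilde\Omega)}^2+(\kappa H)^2\!\int_{\widetilde\Omega}w\,|\Ab-\Fb|^2|\psi|^2\,\md x,\qquad w=1\ \text{on}\ \Omega,\ \ w=\tfrac1m\ \text{on}\ \widetilde\Omega\setminus\Omega,
\]
combined with the energy identity of Step~2 and the bounds of Steps~2--3 (which make the last two terms $O\big((\kappa H)^{-1/4}\big)\,\kappa^2\|\psi\|_{L^4(\Omega)}^4$), gives $q_{\Fb}[\psi]\le-\kappa^2\|\psi\|_{L^4(\Omega)}^4\big(1-C(\kappa H)^{-1/4}\big)<0$ for $\kappa$ large. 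Since $\psi|_\Omega\not\equiv0$ this forces $\mu^{(1)}(\kappa,H)\le q_{\Fb}[\psi]/\|\psi\|_{L^2(\widetilde\Omega)}^2<0$, contradicting $\mu^{(1)}(\kappa,H)\ge0$. Hence no non-trivial critical point exists for $H\ge\overline H(\kappa)$, i.e.\ $\mathcal N^{\rm sc}(a,m;\kappa)\subseteq\big(0,\overline H(\kappa)\big)=\mathcal N^{\rm loc}(a,m;\kappa)$; choosing $\kappa_0$ above all thresholds used finishes the proof.

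\emph{Main obstacle.} The crux is the concentration step: one must know that a non-trivial critical point at a field $H\gtrsim\kappa$ is genuinely localized in a magnetic-length tube about the interface $\partial\Omega$, with a \emph{quantitative} gain — the factor $(\kappa H)^{-1/4}$ — strong enough to beat the prefactor $(\kappa H)^2$ multiplying $\|\curl\Ab-1\|^2$ in the gauge-covariance identity. Transporting the Agmon decay from $\Omega$ across $\partial\Omega$ into the normal region, and controlling $\curl\Ab-1$ uniformly, is precisely where the regularity theory for the elliptic transmission problem (first key ingredient) is used; the monotonicity of $\mu^{(1)}(\kappa,H)$ and the model-operator analysis of Step~1 (second key ingredient), which is also where $m>1$ is essential, is the other serious input.
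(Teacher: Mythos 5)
Your overall architecture is the right one (and essentially the paper's): show that a non-trivial critical point at a field where $\mu^{(1)}(\kappa,H)\geq 0$ leads to a contradiction, using the a priori bounds of Step~2, concentration of $\psi$ near $\partial\Omega$, and a comparison between the energy with $\Ab$ and the quadratic form with $\mathbf F$ (your exact gauge-comparison identity via the second Ginzburg--Landau equation is a clean variant of the weighted Cauchy--Schwarz used in the paper). But the two quantitative estimates on which Step~3 hinges are asserted rather than proved, and one of them does not follow from the mechanism you invoke. The estimate $\|\psi\|_{L^4(\widetilde\Omega)}^4\leq C\|\psi\|_{L^4(\Omega)}^4$ (equivalently $a\int_{\widetilde\Omega\setminus\Omega}|\psi|^4\leq\int_\Omega|\psi|^4$) is \emph{not} a consequence of exponential localization near $\partial\Omega$ together with trace estimates: localization tells you where the mass sits, not how the exterior mass compares with the interior mass, and near the transition both $\int_\Omega|\psi|^4$ and $\int_{\widetilde\Omega\setminus\Omega}|\psi|^4$ are small, so a soft smallness argument cannot produce the comparison with a constant independent of $\kappa,H$. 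In the paper this is exactly the non-trivial point (Theorem~\ref{partition-L4}): one first proves $H^2$-regularity of $\psi$ across the interface after a gauge change that removes $\Ab$ from the transmission condition (Lemma~\ref{reg-H2}), then shows that $u=|\psi|^2$ is a weak solution of a transmission problem of the form \eqref{eq-|psi|^2} and tests it with $u$; only the $L^2$ analogue \eqref{partition} comes for free from your energy identity. The concentration bound $\|\psi\|_{L^2(\widetilde\Omega)}^2\leq C(\kappa H)^{-1/4}\|\psi\|_{L^4(\Omega)}^2$ also uses this $L^4$ comparison (to pass from $\|\psi\|_{L^4(\widetilde\Omega)}$, which Cauchy--Schwarz over the boundary tube produces, to $\|\psi\|_{L^4(\Omega)}$), so the gap propagates into both inputs of Step~3. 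Likewise, your claimed $\|\curl\Ab-1\|_{L^\infty}\leq C/H$ does not follow from the $L^2$ bound by one application of elliptic regularity ($H^1\not\hookrightarrow L^\infty$ in 2D), and is in any case not needed if one argues, as the paper does in Lemma~\ref{lem-BoFo}, with the magnetic lower bound involving $\curl\Ab$ and the $L^2$/$L^4$ estimates of Lemma~\ref{standard}.

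A second, more structural problem is Step~1: for general $m>1$ you cannot assert that $\mathcal N^{\rm loc}(a,m;\kappa)$ is an interval via ``$\partial_H\mu^{(1)}>0$ whenever $\mu^{(1)}=0$''. That monotonicity is precisely the hard result the paper only establishes for $m>m_*$ (and, for disc domains, under the extra condition \eqref{condition-m}, hence for $m$ large); it is the reason Theorem~\ref{thm:identification} carries the hypothesis $m>m_0$, while Theorem~\ref{thm-main} is deliberately stated as an equality of \emph{sets} valid for all $m>1$. The good news is that your argument does not actually need the interval structure: all Step~3 uses is that $\mu^{(1)}(\kappa,H)\geq0$ forces $H\geq(1+\delta)\kappa$, which follows from the trial-state upper bound alone (this is where $m>1$, i.e.\ $\alpha_0(a,m)<1$, enters; the paper instead uses a Dirichlet bulk trial state together with the reduction $H\leq C\kappa$ from Theorem~\ref{thm-GP-type}). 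So you should delete the monotonicity claim and rephrase Step~3 as: assume $\mu^{(1)}(\kappa,H)\geq0$ and a non-trivial critical point exists, and derive $\mu^{(1)}(\kappa,H)<0$. Even with that repair, the proof is incomplete until you supply an actual argument for the interior/exterior $L^4$ comparison and the concentration estimate, which is where the real work of the paper lies.
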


In the literature, it is typical  to introduce the following critical
fields (cf. e.g. \cite{FoHe,LuPa}),
\begin{align}
&\overline H^{\rm sc}_{C_3}(a,m;\kappa)=\sup \mathcal N^{\rm
  sc}(a,m;\kappa)\,,\quad
\underline H^{\rm sc}_{C_3}(a,m;\kappa)=\inf \mathbb R_+\setminus \mathcal N^{\rm
  sc}(a,m;\kappa)\,,\label{hc3-sc}\\
&\overline H_{C_3}(a,m;\kappa)=\sup \mathcal N(a,m;\kappa)\,,\quad
\underline H_{C_3}(a,m;\kappa)=\inf\mathbb R_+\setminus \mathcal
N(a,m;\kappa)\,,
\label{hc3-gl}\\
&\overline H_{C_3}^{\rm loc}(a,m;\kappa)=\sup \mathcal N^{\rm loc}(a,m;\kappa)\,,\quad
\underline H_{C_3}^{\rm loc}(a,m;\kappa)=\inf\mathbb R_+\setminus \mathcal
N^{\rm loc}(a,m;\kappa)\,,
\label{hc3-loc}
\end{align}

As a corollary of Theorem~\ref{thm-main}, we get a result concerning
equality of the above critical fields.

\begin{corol}\label{thm-main-corol}
Given $a>0$ and $m>1$, there exists $\kappa_0>0$ such that, for all
$\kappa\geq\kappa_0$, the following equalities hold,
$$
\overline H^{\rm sc}_{C_3}(a,m;\kappa)=\overline H
_{C_3}(a,m;\kappa)=\overline H^{\rm loc}_{C_3}(a,m;\kappa)\,,$$
and
$$\underline H^{\rm sc}_{C_3}(a,m;\kappa)
=\underline H_{C_3}(a,m;\kappa)=\underline H^{\rm loc}_{C_3}(a,m;\kappa)\,.$$
\end{corol}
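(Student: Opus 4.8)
The plan is to derive Corollary~\ref{thm-main-corol} as a direct consequence of Theorem~\ref{thm-main}, using only the definitions \eqref{hc3-sc}--\eqref{hc3-loc} and elementary properties of suprema and infima. Since Theorem~\ref{thm-main} asserts that the three sets $\mathcal N^{\rm sc}(a,m;\kappa)$, $\mathcal N^{\rm loc}(a,m;\kappa)$ and $\mathcal N(a,m;\kappa)$ coincide for all $\kappa\geq\kappa_0$ (with $\kappa_0$ the constant furnished by that theorem), equal sets have equal suprema and equal infima of their complements in $\mathbb R_+$. Hence the first chain of equalities follows by applying $\sup(\cdot)$ to the common set, and the second chain follows by applying $\inf(\mathbb R_+\setminus(\cdot))$ to the common set. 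So the proof is essentially one line once Theorem~\ref{thm-main} is in hand.

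More explicitly, I would first fix $a>0$ and $m>1$ and take the $\kappa_0$ provided by Theorem~\ref{thm-main}. For $\kappa\geq\kappa_0$, set $S:=\mathcal N^{\rm sc}(a,m;\kappa)=\mathcal N^{\rm loc}(a,m;\kappa)=\mathcal N(a,m;\kappa)$. Then by \eqref{hc3-sc}, \eqref{hc3-gl} and \eqref{hc3-loc},
\begin{equation*}
\overline H^{\rm sc}_{C_3}=\sup S=\overline H_{C_3},\qquad \overline H^{\rm sc}_{C_3}=\sup S=\overline H^{\rm loc}_{C_3},
\end{equation*}
which gives the first displayed chain, and similarly
\begin{equation*}
\underline H^{\rm sc}_{C_3}=\inf(\mathbb R_+\setminus S)=\underline H_{C_3}=\underline H^{\rm loc}_{C_3},
\end{equation*}
which gives the second. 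No monotonicity, boundedness, or spectral input is needed at this stage beyond what is already encoded in Theorem~\ref{thm-main}; in particular the inclusions $\mathcal N^{\rm loc}\subset\mathcal N\subset\mathcal N^{\rm sc}$ noted before \eqref{eq-inclusion-hc3} are subsumed by the equality of sets.

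There is essentially no obstacle: the only point worth a remark is that all six critical fields are well defined real numbers, i.e. the relevant sets and their complements in $\mathbb R_+$ are nonempty and bounded appropriately, so that the suprema and infima are finite. The boundedness of $\mathcal N^{\rm sc}(a,m;\kappa)$ is exactly the content of \cite[Theorem~4.7]{Gi} (restated as Theorem~\ref{thm-GP-type}), which together with the inclusion \eqref{eq-inclusion-hc3} makes $\overline H_{C_3}$ and $\overline H^{\rm loc}_{C_3}$ finite as well; the complements in $\mathbb R_+$ are nonempty for the same reason, so the lower critical fields are finite. Granting this, the corollary is immediate from Theorem~\ref{thm-main}.
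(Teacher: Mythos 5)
Your proposal is correct and coincides with the paper's own (essentially immediate) derivation: the corollary is obtained by applying $\sup(\cdot)$ and $\inf(\mathbb R_+\setminus\cdot)$ to the three sets, which Theorem~\ref{thm-main} shows are identical for $\kappa\geq\kappa_0$. Your additional remark that finiteness of the critical fields follows from Theorem~\ref{thm-GP-type} together with the inclusions is a sensible, if optional, precaution.
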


\begin{rem}\label{rem-int-1}
In \cite{Kach}, the second author has established the following
asymptotic expansion of $\underline H_{C_3}^{\rm loc}(a,m;\kappa)$
(it can also be shown to hold for $\overline H_{C_3}^{\rm
loc}(a,m;\kappa)$):
\begin{equation}\label{kach}
\underline H_{C_3}^{\rm
loc}(a,m;\kappa)=\frac{\kappa}{\alpha_0(a,m)}\left(1+o(1)\right)\quad{\rm
as~}\kappa\to\infty\,.
\end{equation}
The constant $\alpha_0(a,m)$ in \eqref{kach} satisfies
$\frac12<\alpha_0(a,m)<1$ for $m>1$ and $\alpha_0(a,m)=1$ otherwise,
and is defined via an auxiliary spectral problem (see
Theorem~\ref{kach-AsymptAnal} below).
\end{rem}

\begin{rem}\label{rem-m<1}
Theorem~\ref{thm-main} does not cover the regime $m\leq1$. In this
specific regime, we have by Remark~\ref{rem-int-1} that the
nucleation field $H_{C_3}$ coincides with the second critical field
$H_{C_2}$. This reflects one feature of superconductors surrounded
by normal materials, that surface superconductivity can be absent
(see \cite{deGeHu}).
However, we need specific tools to treat this interesting case,
which are beyond the scope of the present paper.
\end{rem}

We say that the transition from the superconducting to the normal
phase is sharp if we have equality between upper and lower fields. By
Corollary~\ref{thm-main-corol} it suffices to verify this for the
`local fields', i.e. look whether the following equality holds: 
$$\underline H_{C_3}^{\rm loc}(a,m;\kappa)=\overline H_{C_3}^{\rm
loc}(a,m;\kappa)\,?$$ One part of the present paper is devoted to
this question, which links to a monotonicity problem of magnetic
Schr\"odinger operators.

It is shown in \cite{Kach} that there exists $m_*\geq 1$ (see
\eqref{m*} below for a precise definition of $m_*$) such that for
$m> m_*$, the second correction term in (\ref{kach}) is of order $1$
and determined by the maximal value of the scalar curvature of
$\Omega$. The precise result in this case is the following:
\begin{equation}\label{HC3-curv}
\underline H_{C_3}(a, m;\kappa)=\frac{\kappa}{\alpha_0\left(a,m\right)}
+\frac{\mathcal
C_1\left(a,m\right)}{\alpha_0\left(a,m\right)^{3/2}}(\kappa_{\rm
r})_{\max}+\mathcal O\left(\kappa^{-1/3}\right),\quad \text{\rm as
}\kappa\to+\infty,\end{equation}
where the function
$\mathcal C_1(a,\cdot):\,[ m_*,+\infty[\,\mapsto\mathbb R_+$ is
    defined via an auxiliary self-adjoint operator (see (\ref{C1})) and
$\kappa_{\rm r}$ denotes the scalar curvature of $\partial\Omega$.

Our final result is that all the critical fields coincide for large
$\kappa$ under the extra condition that $m$ is sufficiently large.

\begin{thm}\label{thm:identification}
Given $\Omega, \widetilde \Omega$ and $a>0$, there exists $
m_0>0$ and if $m>m_0$, there exists $\kappa_0>0$ such that if 
$\kappa>\kappa_0$ then all the six
critical fields defined in \eqref{hc3-sc}, \eqref{hc3-gl} and
\eqref{hc3-loc} coincide. Furthermore, their common value is the
unique solution $H=H_{C_3}(a,m;\kappa)$ of the equation
\begin{align*}
  \mu^{(1)}(\kappa,H) = 0\,.
\end{align*}
In particular, the asymptotics \eqref{HC3-curv} holds for all the six
different definitions of the critical field. 
\end{thm}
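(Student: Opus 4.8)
The plan is to deduce everything from Corollary~\ref{thm-main-corol} together with a single monotonicity estimate for $H\mapsto\mu^{(1)}(\kappa,H)$ in the transition window. By Corollary~\ref{thm-main-corol} (valid for $m>1$ and $\kappa$ large) the six critical fields of \eqref{hc3-sc}, \eqref{hc3-gl} and \eqref{hc3-loc} collapse to the two \emph{local} ones, so it suffices to prove that
\begin{equation}\label{eq-id-red}
\underline H^{\rm loc}_{C_3}(a,m;\kappa)=\overline H^{\rm loc}_{C_3}(a,m;\kappa)
\end{equation}
and that their common value is the \emph{unique} zero of $H\mapsto\mu^{(1)}(\kappa,H)$. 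Three soft facts will be used: $H\mapsto\mu^{(1)}(\kappa,H)$ is continuous (an infimum of Rayleigh quotients of \eqref{qf} depending continuously on $H$); $\mu^{(1)}(\kappa,0)<0$ for $\kappa$ large (test \eqref{qf}--\eqref{EV} at $H=0$ on a fixed function of $C_c^\infty(\Omega)$); and $\mathcal N^{\rm loc}(a,m;\kappa)$ is bounded, since $\mathcal N^{\rm loc}\subset\mathcal N^{\rm sc}$ by the inclusions around \eqref{eq-inclusion-hc3} and the latter set is bounded by Theorem~\ref{thm-GP-type}. With the definitions \eqref{HC3-loc}--\eqref{hc3-loc}, these give $\mu^{(1)}(\kappa,\cdot)<0$ on $(0,\underline H^{\rm loc}_{C_3})$, $\mu^{(1)}(\kappa,\cdot)\geq0$ on $(\overline H^{\rm loc}_{C_3},+\infty)$, and $\mu^{(1)}(\kappa,\underline H^{\rm loc}_{C_3})=\mu^{(1)}(\kappa,\overline H^{\rm loc}_{C_3})=0$. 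Finally, by Remark~\ref{rem-int-1} (the analysis of \cite{Kach}) both local fields equal $\tfrac{\kappa}{\alpha_0(a,m)}(1+o(1))$, so for a fixed small $\eta>0$ and all $\kappa$ large the closed window $\mathcal W_\kappa:=\{H>0\,:\,\kappa/H\in[\alpha_0-\eta,\alpha_0+\eta]\}$ contains $[\underline H^{\rm loc}_{C_3},\overline H^{\rm loc}_{C_3}]$ in its interior, with $(0,\underline H^{\rm loc}_{C_3})$ to its left and $(\overline H^{\rm loc}_{C_3},+\infty)$ to its right. Hence the whole theorem follows once we establish
\begin{equation}\label{eq-id-monot}
\partial_H\mu^{(1)}(\kappa,H)\geq c\,\kappa\qquad(H\in\mathcal W_\kappa)
\end{equation}
for some $c=c(a,m,\Omega,\widetilde\Omega)>0$ and $\kappa$ large: \eqref{eq-id-monot} makes $\mu^{(1)}(\kappa,\cdot)$ strictly increasing on $\mathcal W_\kappa$, so the interval $[\underline H^{\rm loc}_{C_3},\overline H^{\rm loc}_{C_3}]$, at whose endpoints $\mu^{(1)}$ vanishes, degenerates to a point $H_{C_3}(a,m;\kappa)$, proving \eqref{eq-id-red}; combined with $\mu^{(1)}<0$ left of $\mathcal W_\kappa$ and (by the rescaling below) $\mu^{(1)}>0$ right of $\mathcal W_\kappa$, that point is the unique zero of $\mu^{(1)}(\kappa,\cdot)$, and Corollary~\ref{thm-main-corol} identifies all six fields with it.

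To prove \eqref{eq-id-monot} I would rescale and compare with a model. Setting $\beta=\kappa/H$ and $R=\sqrt{\kappa H}$ and performing the change of variables $x\mapsto\sqrt{\kappa H}\,x$ in \eqref{qf}--\eqref{EV} (which replaces $\mathbf F$ by a unit--curl field on $\sqrt{\kappa H}\,\widetilde\Omega$) gives the \emph{exact} identity $\mu^{(1)}(\kappa,H)=\kappa H\,\widehat\mu(\beta;R)$, where $\widehat\mu(\beta;R)$ is the bottom of the spectrum of $-\nabla_{\mathbf F}^2-\beta$ on $R\Omega$ (now $\mathbf F$ the rescaled unit--curl field) coupled, through the natural transmission conditions on $\partial(R\Omega)$ ($\phi$ continuous and $\partial_\nu\phi|_{R\Omega}=\tfrac1m\partial_\nu\phi|_{R\widetilde\Omega\setminus R\Omega}$), to $\tfrac1m(-\nabla_{\mathbf F}^2)+a\beta$ on $R\widetilde\Omega\setminus R\Omega$. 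As $R\to\infty$ the rescaled interface $\partial(R\Omega)$ flattens and $\widehat\mu(\beta;R)\to\mathfrak m(\beta)$, the ground--state energy of the associated \emph{transmission half--plane model} of well--depth $\beta$; this model and the elliptic transmission boundary value problem behind it are precisely the objects analysed in \cite{Kach} (see Theorem~\ref{kach-AsymptAnal}). By construction $\mathfrak m(\alpha_0)=0$, and $\mathfrak m$ is $C^1$ near $\alpha_0$ and strictly decreasing (deepening the well lowers the ground state). For $m>m_*$ one has from \cite{Kach} the refined expansion $\widehat\mu(\beta;R)=\mathfrak m(\beta)+R^{-1}\mathcal{C}_1(a,m)(\kappa_{\rm r})_{\max}+o(R^{-1})$, as well as $\partial_\beta\widehat\mu(\beta;R)\to\mathfrak m'(\beta)$, both locally uniformly in $\beta$; the underlying Agmon--type localisation of the ground state in an $O(\kappa^{-1})$--tube about $\partial\Omega$ is legitimate precisely because on $\mathcal W_\kappa$ one has $\mu^{(1)}(\kappa,H)=\kappa H\,\widehat\mu(\beta;R)=o(\kappa^2)$, far below both the bulk threshold $(1-\beta)\kappa H$ in $\Omega$ (here $\alpha_0<1$, hence $m>1$, enters) and the threshold $(\tfrac1m+a\beta)\kappa H$ in $\widetilde\Omega\setminus\Omega$.

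Differentiating $\mu^{(1)}(\kappa,H)=\kappa H\,\widehat\mu(\kappa/H;\sqrt{\kappa H})$ in $H$ (equivalently, applying Feynman--Hellmann to $\partial_H\mu^{(1)}(\kappa,H)=-2\kappa\int_\Omega\mathbf F\cdot\mathrm{Im}(\overline{\phi_{\kappa,H}}\,\nabla_{\kappa H\mathbf F}\phi_{\kappa,H})\,\md x-\tfrac{2\kappa}{m}\int_{\widetilde\Omega\setminus\Omega}\mathbf F\cdot\mathrm{Im}(\overline{\phi_{\kappa,H}}\,\nabla_{\kappa H\mathbf F}\phi_{\kappa,H})\,\md x$, read with one--sided derivatives if $\mu^{(1)}$ is a degenerate eigenvalue) one obtains
$$\partial_H\mu^{(1)}(\kappa,H)=\kappa\big(\widehat\mu(\beta;R)-\beta\,\partial_\beta\widehat\mu(\beta;R)\big)+\tfrac12\kappa R\,\partial_R\widehat\mu(\beta;R)\,.$$
By the previous paragraph the second term is $O(1)$ (it equals $-\tfrac12\sqrt{\beta}\,\mathcal{C}_1(a,m)(\kappa_{\rm r})_{\max}+o(1)$), while the first is $\kappa(\mathfrak m(\beta)-\beta\,\mathfrak m'(\beta))+o(\kappa)$, uniformly on $\mathcal W_\kappa$. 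Now $\mathfrak m(\alpha_0)=0$, and---this is the decisive spectral input, the ``monotonicity result for a magnetic Schr\"odinger operator with discontinuous coefficients'' announced in the introduction---$\mathfrak m'(\alpha_0)<0$, which I would prove by a virial identity at the threshold: $\mathfrak m(\alpha_0)=0$ means the magnetic kinetic energy of a threshold ground state $u$ balances the potential, forcing $\|u\|_{L^2({\rm in})}^2>a\|u\|_{L^2({\rm out})}^2$, while Feynman--Hellmann for the model gives $\mathfrak m'(\alpha_0)=-\|u\|_{L^2({\rm in})}^2+a\|u\|_{L^2({\rm out})}^2$. Hence $\mathfrak m(\beta)-\beta\,\mathfrak m'(\beta)\geq\tfrac12\alpha_0|\mathfrak m'(\alpha_0)|$ on $\mathcal W_\kappa$ for $\eta$ small, which yields \eqref{eq-id-monot}. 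Running the same rescaling for $\kappa/H<\alpha_0-\eta$, where $\widehat\mu(\beta;R)\geq\tfrac12\inf_{0<\beta'\leq\alpha_0-\eta}\mathfrak m(\beta')>0$ for $\kappa$ large, shows $\mu^{(1)}(\kappa,\cdot)>0$ strictly to the right of $\mathcal W_\kappa$, closing the argument of the first paragraph; and since \eqref{HC3-curv} was established in \cite{Kach} for $\underline H_{C_3}$ and all six fields now coincide, \eqref{HC3-curv} holds for every one of them (one may take $m_0:=m_*$ of \eqref{m*}).

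The main obstacle is \eqref{eq-id-monot}. Two points require genuine work. First, the Agmon localisation and the reduction of $\widehat\mu(\beta;R)$ to the transmission half--plane model, with its curvature correction, must be made uniform for $\beta$ in the fixed window $[\alpha_0-\eta,\alpha_0+\eta]$ and all $\kappa$ large, and the term--by--term differentiations in $\beta$ and in $R$ must be justified---analytic perturbation theory for the simple model eigenvalue in $\beta$, together with a crude comparison bound $|\partial_R\widehat\mu(\beta;R)|\leq C/R$ for the (lower--order) $R$--derivative. Second, and most essentially, one must prove the strict monotonicity $\mathfrak m'(\alpha_0)<0$---the strong--diamagnetism property of the magnetic Schr\"odinger operator with discontinuous mass and potential---which is the genuinely new spectral ingredient; note that the largeness of $m$ is \emph{not} what produces this positivity (the virial argument needs only $a>0$ and $\alpha_0<1$) but is needed only so that the sharper asymptotics \eqref{HC3-curv} are available for the last assertion. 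Everything else---the collapse of the six fields via Corollary~\ref{thm-main-corol}, the elementary properties of $\mu^{(1)}(\kappa,\cdot)$, and the bookkeeping turning \eqref{eq-id-monot} into the uniqueness of the zero---is routine.
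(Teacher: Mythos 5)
Your reduction skeleton (collapse the six fields to the two local ones via Theorem~\ref{thm-main}/Corollary~\ref{thm-main-corol}, then show $H\mapsto\mu^{(1)}(\kappa,H)$ is strictly increasing in the transition window and conclude uniqueness of its zero) is exactly the structure of the paper's proof, which goes through Theorem~\ref{corol-monton}. The gap is in the monotonicity estimate itself, at the point you dismiss as a ``crude comparison bound''. In your decomposition the second term is
$\tfrac12\kappa R\,\partial_R\widehat\mu(\beta;R)=\kappa\bigl(\partial_B\mu_1(B,\alpha)-\mu_1(B,\alpha)/B\bigr)$,
and the bound $|\partial_R\widehat\mu|\leq C/R$ only gives $O(\kappa)$ for this term --- the same order as the positive main term $\kappa(\mathfrak m-\beta\mathfrak m')$, so it proves nothing. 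What you actually need is $\partial_B\mu_{1,\pm}(B,\alpha)-\mu_1(B,\alpha)/B=o(1)$ uniformly for $\alpha$ in the window, i.e.\ precisely the strong-diamagnetism statement of Theorem~\ref{thm-monton}. This cannot be obtained by differentiating the eigenvalue asymptotics term by term: only one-sided $B$-derivatives exist (eigenvalue branches cross), a priori the Feynman--Hellmann formula only yields $|\partial_B\lambda_{1,\pm}|=O(\sqrt B)$ since $\|\mathbf F\phi\|\,\|(\nabla-iB\mathbf F)\phi\|\sim\sqrt B$, and the whole content of Section~\ref{Sec:Monot} (the gauge of Lemma~\ref{FoHe-lem-gauge} making $\Ab$ small where the ground state lives, plus the localization of Lemma~\ref{HeMo} near the set $\Pi$ of maximal curvature, which itself requires the two-term expansion of Theorem~\ref{kach-thm2} and hence $m>m_*$ and the non-degenerate minimum $\xi(a,m,\alpha)$ of Theorem~\ref{kach-AsymptAnal}) is devoted to establishing exactly this. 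Your ``analytic perturbation theory in $\beta$'' does not touch the problematic derivative, which is in $R$ (equivalently $B$).

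Moreover, for the disc the statement you need is false as you formulate it: there $\Pi=\partial\Omega$, the gauge trick is unavailable, and by Theorem~\ref{fe-disc} the eigenvalue contains a bounded oscillating term $C_2(a,m,\alpha)\Delta_B^2$ whose $B$-derivative is of order one, so $\partial_B\lambda_{1,\pm}$ does \emph{not} converge to $\beta(a,m,\alpha)+\alpha$ and no bound $\partial_H\mu^{(1)}\geq c\kappa$ with $c$ produced solely by $\mathfrak m-\beta\mathfrak m'$ holds. The paper instead proves the weaker lower bound of Theorem~\ref{monoton-disc} involving $C_2$ of \eqref{C2} and then needs the structural condition \eqref{condition-m} --- verified only for $m$ large --- to conclude positivity of $\partial_H\mu^{(1)}$. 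This also contradicts your closing remark: the largeness of $m$ is not merely for the final asymptotics \eqref{HC3-curv}; it enters the monotonicity argument itself (through $m>m_*$ for the localization/two-term expansion, and through \eqref{condition-m} for the disc), and in particular taking $m_0=m_*$ of \eqref{m*} is not justified. Your virial observation that $\mathfrak m'(\alpha_0)<0$ (i.e.\ $b_1>0$ at threshold) is correct and pleasant, but it is not the bottleneck; the missing ingredient is the control of the $B$-derivative of the eigenvalue, which your proposal assumes rather than proves.
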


As we mentioned earlier, the essential key for establishing
Theorem~\ref{thm:identification} is a monotonicity result concerning the first
eigenvalue of a Schr\"odinger type operator. Actually, let
$\mu_1(B,\alpha)$  be the first eigenvalue of the following operator
\begin{equation}\label{kach2}
-\nabla_{B\mathbf F}\cdot w_m(x)\nabla_{B\mathbf F}+\alpha
BV_a(x)\quad {\rm in~}\widetilde \Omega\,,
\end{equation}
with
\begin{equation}\label{w-V}
w_m(x)=\left\{\begin{array}{cl}
1&{\rm in ~}\Omega\\
\frac1m&{\rm in~}\mathbb R^2
\setminus\Omega\,,\end{array}\right.\quad V_a(x)=\left\{
\begin{array}{cl}
-1&{\rm in}~\Omega\\
a&{\rm in}~\mathbb R^2\setminus\Omega\,.
\end{array}
\right.\end{equation} Roughly speaking, we will
establish that the equation in $(B,\alpha)$
$$\mu_1(B,\alpha)=0$$
admits a unique solution provided that $B$ is large enough and
$\alpha$ remains close to $\alpha_0(a,m)$.

In order to prove Theorem~\ref{thm-main}, we establish the following
crucial estimate, valid for any critical point of the functional
$\mathcal G_{\kappa,H}$ (i.e. solution of \eqref{GL-eq} below),
$$a\int_{\widetilde\Omega\setminus\Omega}|\psi|^4\,dx\leq
\int_\Omega|\psi|^4\,dx\,.$$ The above estimate is non-trivial and
we prove it through a detailed analysis of the regularity of
critical points of the functional (\ref{GL-func}) (see
Lemma~\ref{reg-H2} and Theorem~\ref{partition-L4} below). With the
above estimate on the one  hand,  and other weak decay estimates
established in  Lemmas~\ref{standard} and \ref{lem-BoFo} on the
other hand, we prove Theorem~\ref{loc=glo}, which links the
existence of non-trivial critical points to a spectral condition on
the eigenvalue (\ref{EV}).   Theorem~\ref{thm-main} is then   a
consequence of Theorem~\ref{loc=glo}.

The paper is organized in the following way.
In Section~\ref{Sec:loc=glob}, we establish a necessary and
sufficient condition for the functional $\mathcal G_{\kappa,H}$ to
admit non-trivial critical points, see Theorem~\ref{loc=glo}.

In
Section~\ref{Sec:Monot} we establish a monotonicity result for the
eigenvalue $\mu_1(B,\alpha)$, see Theorem~\ref{thm-monton}. As a
consequence, we obtain equality of local fields stated in
Theorem~\ref{thm:identification}, 
Theorem~\ref{corol-monton} and Remark~\ref{rem:Conclusion}.

In Section~\ref{decay}, we discuss the decay of energy minimizing order
parameters, and we prove that they decay exponentially away from
the boundary provided that the magnetic field is sufficiently large.\\
Finally, in the appendix, we prove an improved
expansion for (\ref{HC3-curv}) in the particular case when $\Omega$ is
a disc domain.

\section{Proof of Theorem~\ref{thm-main}}\label{Sec:loc=glob}
\subsection{Basic estimates on solutions}
We return now to the analysis of critical points of the functional
(\ref{GL-func}). As we mentioned in the introduction, minimizers of
\eqref{GL-func}
exist in the space $H^1(\widetilde\Omega;\mathbb C)\times H^1
(\widetilde\Omega;\mathbb R^2)$ and they are
{\it weak} solutions of the associated
Ginzburg-Landau equations:
\begin{equation}\label{GL-eq}
\left\{
\begin{array}{l}
-\nabla_{\kappa H\Ab}\cdot w_m\nabla_{\kappa
H\Ab}\psi+\kappa^2\left(V_a\psi+
\mathbf 1_{\Omega}|\psi|^2\psi\right)=0\,,\\
\nabla^\bot {\rm curl}\,\Ab=(\kappa H)^{-1}w_m {\rm Im}\left(\overline{\psi}\,(\nabla-i\kappa
H\Ab)\psi\right)\,,\quad{\rm in~}\widetilde\Omega\,,\\
\nu\cdot(\nabla-i\kappa H\Ab)\psi=0\,,\quad{\rm curl}\,\Ab=1\quad{\rm
  on}~\partial\widetilde\Omega\,,
\end{array}
\right.
\end{equation}
where $\nu$ is the unit outward normal
vector of $\partial\widetilde\Omega$.

In order to avoid the question of regularity concerning solutions of
the above equation, we shall invoke energy arguments and use only
the weak formulation of (\ref{GL-eq}). More precisely,
$(\psi,\Aa)\in H^1(\widetilde\Omega;\mathbb C)\times
H^1(\widetilde\Omega;\mathbb R^2)$ is a weak solution of
(\ref{GL-eq}) if for all $\phi\in H^1(\widetilde\Omega;\mathbb C)$
and $\mathbf a\in H^1(\widetilde\Omega;\mathbb R^2)$, the following
equalities hold,
\begin{multline}
  \label{eq:15}
\int_{\widetilde\Omega}
\Big(w_m(x)(\nabla-i\kappa H\Aa)\psi\cdot\overline{(\nabla-i\kappa H
\Aa)\phi}\\
+\kappa^2(V_a(x)\psi+1_\Omega(x)|\psi|^2
\psi)\overline\phi\Big)\, d  x=0\,,
\end{multline}
\begin{align}
  \label{eq:16}
\int_{\widetilde\Omega}{\rm curl}\,(\Aa-\Fb)({\rm curl}\,\mathbf
a)-w_m(x)(\kappa H)^{-1}{\rm Im} \big(\overline{\psi}\,(\nabla-i\kappa H\Aa)\psi\big)\cdot
\mathbf a\, d  x=0\,.
\end{align}

A standard choice of gauge permits us to minimize (\ref{GL-func})
in the reduced space
$H^1(\widetilde\Omega;\mathbb C)\times H^1_{\rm
  div}(\widetilde\Omega;\mathbb R^2)$\,,
where
\begin{equation}\label{gauge}
 H^1_{\rm
  div}(\widetilde\Omega;\mathbb R^2)=
\{\Ab\in H^1(\widetilde\Omega;\mathbb R^2)~:~
{\rm div}\,\Ab=0\quad{\rm in}~\widetilde\Omega\,,\quad
\nu\cdot\nabla \Ab=0\quad{\rm on}~\partial\widetilde\Omega\}\,.
\end{equation}

We get as  immediate consequence of the Poincar\'e Lemma (see
e.g. \cite[p.~16, Theorem~1.5]{Te}):

\begin{lem}\label{lem-poincare}
There exists a constant $C>0$ such that for all $\Ab \in H^1_{\rm
  div}(\widetilde\Omega;\mathbb R^2)$,
\begin{equation}\label{poincare}
\|\Ab-\mathbf F\|_{H^1(\widetilde\Omega)}\leq C\|{\rm
  curl}\,\Ab-1\|_{L^2(\widetilde\Omega)}\,.
\end{equation}
Here $\mathbf F$ is the vector field introduced in (\ref{F}).
\end{lem}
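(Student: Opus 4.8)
The plan is to use the Poincar\'e-type estimate for vector fields on a simply connected domain, which controls a divergence-free field satisfying the natural boundary condition in terms of its curl. First I would note that the difference $\Ab - \Fb$ lies in $H^1_{\rm div}(\widetilde\Omega;\mathbb R^2)$: indeed, by \eqref{F} the field $\Fb$ satisfies $\operatorname{div}\Fb = 0$ in $\widetilde\Omega$ and $\nu\cdot\nabla\Fb = 0$ on $\partial\widetilde\Omega$, and the same holds for $\Ab$ by the definition \eqref{gauge} of the reduced space; hence $\Ab - \Fb$ is divergence free and satisfies $\nu\cdot\nabla(\Ab-\Fb)=0$ on $\partial\widetilde\Omega$. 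Moreover $\operatorname{curl}(\Ab-\Fb) = \operatorname{curl}\Ab - 1$.

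Next I would invoke the div-curl estimate from the cited reference \cite[p.~16, Theorem~1.5]{Te}: for a bounded domain $\widetilde\Omega$ with smooth boundary, there is a constant $C$ such that every $\Bb \in H^1(\widetilde\Omega;\mathbb R^2)$ satisfies
\begin{align*}
  \|\Bb\|_{H^1(\widetilde\Omega)} \leq C\left(\|\operatorname{div}\Bb\|_{L^2(\widetilde\Omega)} + \|\operatorname{curl}\Bb\|_{L^2(\widetilde\Omega)} + \|\Bb\|_{L^2(\widetilde\Omega)}\right),
\end{align*}
or, more precisely, the sharper version valid on the subspace with vanishing normal derivative (or, in the formulation of \cite{Te}, the relevant boundary trace): since $\widetilde\Omega$ is simply connected, on this subspace the $L^2$ term on the right can be absorbed, and one gets directly $\|\Bb\|_{H^1(\widetilde\Omega)} \leq C\left(\|\operatorname{div}\Bb\|_{L^2} + \|\operatorname{curl}\Bb\|_{L^2}\right)$. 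Applying this to $\Bb = \Ab - \Fb$ and using $\operatorname{div}(\Ab-\Fb) = 0$ together with $\operatorname{curl}(\Ab-\Fb) = \operatorname{curl}\Ab - 1$ yields \eqref{poincare}.

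The one point requiring a little care is the suppression of the $L^2$-norm on the right-hand side, i.e.\ why the estimate holds without a lower-order term; this is exactly where simple connectedness of $\widetilde\Omega$ enters, since it guarantees that the space of harmonic fields (curl-free, divergence-free, with the given boundary condition) is trivial, so that a standard compactness/contradiction argument removes the $\|\Bb\|_{L^2}$ term. Since this is precisely the content of the quoted Poincar\'e Lemma in \cite{Te}, no further work is needed; the lemma follows by direct citation applied to $\Ab - \Fb$.
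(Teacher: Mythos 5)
Your proposal is correct and takes essentially the same route as the paper, which gives no argument beyond citing the Poincar\'e-type estimate of \cite[p.~16, Theorem~1.5]{Te} applied to $\Ab-\Fb$: this difference is divergence free, satisfies the boundary condition of $H^1_{\rm div}(\widetilde\Omega;\mathbb R^2)$, and has $\operatorname{curl}(\Ab-\Fb)=\operatorname{curl}\Ab-1$, so the cited inequality gives \eqref{poincare} directly. Your extra remark explaining why no lower-order $L^2$ term appears (simple connectedness of $\widetilde\Omega$ rules out nontrivial harmonic fields, so a compactness argument absorbs it) is exactly the content hidden in that citation.
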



\begin{lem}\label{standard}
Let $(\psi,\Ab)$ be a critical configuration of (\ref{GL-func}),
i.e. a weak solution of \eqref{GL-eq}. Then the following
estimates hold~:
\begin{equation}\label{Linfty}
\|\psi\|_{L^\infty(\widetilde\Omega)}\leq1\,,
\end{equation}
\begin{equation}\label{nabla}
\|(\nabla-i\kappa H\Ab)\psi\|_{L^2(\widetilde\Omega)}\leq
\max(1,\sqrt{m})\kappa\|\psi\|_{L^2(\Omega)}\,,
\end{equation}
\begin{equation}\label{partition}
a\int_{\widetilde\Omega\setminus\Omega}|\psi|^2\, d  x\leq \int_\Omega |\psi|^2\, d  x\,,
\end{equation}
\begin{equation}\label{L2}
\|\psi\|_{L^4(\Omega)}^2\leq
\|\psi\|_{L^2(\Omega)}\,,
\end{equation}and
\begin{equation}\label{curl}
H\|{\rm curl}\,\Ab-1\|_{L^2(\widetilde\Omega)}\leq
C\|\psi\|_{L^2(\Omega)}\|\psi\|_{L^4(\widetilde\Omega)}\,.
\end{equation}
Here the constant $C>0$ depends only on $a$, $m$ and
$\widetilde\Omega$.
\end{lem}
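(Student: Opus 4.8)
The plan is to derive each estimate by testing the weak formulation \eqref{eq:15}--\eqref{eq:16} against carefully chosen test functions, in roughly the order listed, since the later bounds rely on the earlier ones. First I would establish \eqref{Linfty}: multiply the first equation by $\overline{\psi}(|\psi|^2-1)_+$ (or use the standard Kato-type argument on $(|\psi|-1)_+$), noting that $w_m>0$ and that the potential term $V_a\psi+\mathbf 1_\Omega|\psi|^2\psi$ has the right sign where $|\psi|>1$; on $\Omega$ one has $(-1+|\psi|^2)|\psi|^2\ge 0$ there, and on $\widetilde\Omega\setminus\Omega$ the term $a|\psi|^2$ is already nonnegative, so the maximum principle / energy inequality forces $\|\psi\|_{L^\infty}\le 1$. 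Some care is needed because the coefficient $w_m$ jumps across $\partial\Omega$, but since the test function is continuous and the weak formulation already encodes the transmission condition, this causes no real trouble — the quadratic gradient term retains its sign.

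Next, \eqref{partition} and \eqref{nabla} come from taking $\phi=\psi$ in \eqref{eq:15}. This gives
\begin{equation*}
\int_{\widetilde\Omega} w_m|\nabla_{\kappa H\Ab}\psi|^2\,dx
=\kappa^2\int_\Omega|\psi|^2\,dx-\kappa^2 a\int_{\widetilde\Omega\setminus\Omega}|\psi|^2\,dx-\kappa^2\int_\Omega|\psi|^4\,dx\,.
\end{equation*}
Since the left-hand side is nonnegative and $\int_\Omega|\psi|^4\ge 0$, we immediately get \eqref{partition}; dropping the negative terms and using $w_m\ge\min(1,1/m)=1/\max(1,m)$... actually to get the stated constant $\max(1,\sqrt m)$ one keeps $\int w_m|\nabla_{\kappa H\Ab}\psi|^2\ge \frac{1}{\max(1,m)}\|\nabla_{\kappa H\Ab}\psi\|^2_{L^2(\widetilde\Omega)}$ and bounds the right side by $\kappa^2\|\psi\|^2_{L^2(\Omega)}$, yielding \eqref{nabla}. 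Estimate \eqref{L2} is just the Cauchy--Schwarz / $L^\infty$ bound $\int_\Omega|\psi|^4\le\|\psi\|^2_{L^\infty}\int_\Omega|\psi|^2\le\int_\Omega|\psi|^2$ combined with \eqref{Linfty}, rewritten as $\|\psi\|_{L^4(\Omega)}^2\le\|\psi\|_{L^2(\Omega)}$.

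Finally, for \eqref{curl} I would take $\mathbf a=\Ab-\mathbf F$ in \eqref{eq:16}; using $\mathrm{curl}(\Ab-\mathbf F)=\mathrm{curl}\,\Ab-1$ (recall $\mathrm{curl}\,\mathbf F=1$) this gives
\begin{equation*}
\|\mathrm{curl}\,\Ab-1\|^2_{L^2(\widetilde\Omega)}
=(\kappa H)^{-1}\int_{\widetilde\Omega} w_m\,\mathrm{Im}\big(\overline\psi\,(\nabla-i\kappa H\Ab)\psi\big)\cdot(\Ab-\mathbf F)\,dx\,.
\end{equation*}
Bounding the right-hand side by Hölder's inequality — $\|\psi\|_{L^4}\,\|\nabla_{\kappa H\Ab}\psi\|_{L^2}\,\|\Ab-\mathbf F\|_{L^4}$ up to the factor $\max(1,1)$ from $w_m\le 1$... note $w_m\le 1$ — then invoking the Poincaré-type inequality of Lemma~\ref{lem-poincare} to absorb $\|\Ab-\mathbf F\|_{L^4}\le C\|\Ab-\mathbf F\|_{H^1}\le C\|\mathrm{curl}\,\Ab-1\|_{L^2}$, and using \eqref{nabla} to replace $\|\nabla_{\kappa H\Ab}\psi\|_{L^2}$ by $C\kappa\|\psi\|_{L^2(\Omega)}$, one divides through by $\|\mathrm{curl}\,\Ab-1\|_{L^2}$ and rearranges the factor $\kappa/(\kappa H)=1/H$ to arrive at \eqref{curl}. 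The main subtlety throughout is bookkeeping the discontinuous coefficient $w_m$ (and making sure the transmission condition built into the weak formulation is what licenses integration by parts without boundary terms on $\partial\Omega$); the estimates themselves are soft once the right test functions are in hand, so I expect the $L^\infty$ bound — where one must justify that $(|\psi|-1)_+\in H^1$ and handle the jump — to be the only place requiring genuine care.
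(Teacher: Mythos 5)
Your proposal is correct and follows essentially the same route as the paper: the $L^\infty$ bound via the standard energy/truncation argument (the paper simply cites \cite{Gi} and \cite{DGP} for this step), then $\phi=\psi$ in \eqref{eq:15} to get \eqref{nabla}--\eqref{L2}, and $\mathbf a=\Ab-\Fb$ in \eqref{eq:16} combined with H\"older, Sobolev embedding and Lemma~\ref{lem-poincare}, followed by division by $\|{\rm curl}\,\Ab-1\|_{L^2}$, for \eqref{curl}. The only points to tidy are that one must first fix the gauge so that $\Ab\in H^1_{\rm div}(\widetilde\Omega;\mathbb R^2)$ before invoking Lemma~\ref{lem-poincare} (all quantities in the lemma are gauge invariant, so this is harmless), and that your remark ``$w_m\le 1$'' fails when $m<1$ (also harmless, since the constant $C$ is allowed to depend on $m$).
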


\begin{proof}
The estimate \eqref{Linfty} is rather standard and is obtained in
\cite{Gi}. It can be derived
using a simple energy argument as in \cite{DGP}, without relying on
regularity properties of $(\psi,\Aa)$.

Inserting $\phi = \psi$ in \eqref{eq:15}
we get,
\begin{equation}\label{c-pts}
\int_{\widetilde\Omega}\left(
w_m(x)|(\nabla-i\kappa H\Ab)\psi|^2+\kappa^2V_a(x)|\psi|^2
+\kappa^2 1_\Omega(x)|\psi|^4\right)\, d  x= 0\,.
\end{equation}
Now \eqref{nabla} and \eqref{partition}
are consequences of \eqref{c-pts}.
The estimate \eqref{L2} is a consequence of \eqref{Linfty}.

Now, we prove \eqref{curl}. Up to a gauge transformation, we may
 assume that $\Ab\in H^1_{\rm div}(\psi,\Ab)$.
Inserting $\mathbf a = \Ab - \Fb$ in \eqref{eq:16} and estimating we
get,
\begin{eqnarray*}
&&\hskip-1cm\kappa H\int_{\widetilde\Omega}|{\rm curl}(\Ab-\mathbf F)|^2\, d  x\\
&&\hskip0.5cm\leq \max\left(1,\frac1m\right)
\|\psi\|_{L^\infty(\widetilde\Omega)}\|(\nabla-i\kappa
H\Ab)\psi\|_{L^2(\widetilde\Omega)}\|(\Ab-\mathbf
F)\psi\|_{L^2(\widetilde\Omega)}.
\end{eqnarray*}
Invoking (\ref{Linfty}),  (\ref{nabla}) and applying a further
Cauchy-Schwarz inequality, we get,
\begin{multline}
  \label{eq:17}
  \kappa H\int_{\widetilde\Omega}|{\rm curl}(\Ab-\mathbf F)|^2\, d 
  x
\leq\\
\kappa\max\left(1,\frac1{\sqrt{m}}\right)\|\psi\|_{L^2(\Omega)}
\|\psi\|_{L^4(\widetilde\Omega)}\|\Ab-\mathbf
F\|_{L^4(\widetilde\Omega)}.
\end{multline}
%
Now, by the Sobolev embedding theorem and
Lemma~\ref{lem-poincare}, we get
 $$\|\mathbf A-\mathbf F\|_{L^4(\widetilde\Omega)}\leq C_{\rm
 Sob}\|\mathbf A-\mathbf F\|_{H^1(\Omega)}\leq \widetilde C\|{\rm curl}(\mathbf A-\mathbf
 F)\|_{L^2(\widetilde\Omega)}.$$
Thus, we can divide through by $\| \curl \Ab - \Fb
\|_{L^2(\widetilde{\Omega})}$ in \eqref{eq:17} to get \eqref{curl}.
\end{proof}

The next theorem gives the finiteness of the critical fields $H_{C_3}$
and is well-known (see
\cite{GiPh,FoHe07} for superconductors in vacuum and \cite{Gi} for a
setting similar to ours). We give an easy (spectral) proof for completeness.

\begin{thm}\label{thm-GP-type}
There exists a constant $C>0$ such that if
\begin{align} \label{eq:20}
  \kappa\geq 1\quad{\rm and} \quad H \geq C \kappa,
\end{align}
then the only weak solution to \eqref{GL-eq} is the normal state $(0, \Fb)$.
\end{thm}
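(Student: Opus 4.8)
The plan is to show that, for $C$ large enough, the hypothesis forces $\psi\equiv 0$; once this is known, inserting $\mathbf a=\Ab-\Fb$ in the weak equation \eqref{eq:16} gives $\|\curl(\Ab-\Fb)\|_{L^2(\widetilde\Omega)}=0$, and then Lemma~\ref{lem-poincare} (after the standard gauge reduction to $\Ab\in H^1_{\rm div}(\widetilde\Omega;\R^2)$) yields $\Ab=\Fb$, so that $(\psi,\Ab)=(0,\Fb)$.

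To prove $\psi\equiv 0$ I would work from the ground state identity \eqref{c-pts}. Discarding the nonnegative quartic term and using $V_a\ge-\mathbf 1_\Omega$ together with $w_m\ge\min(1,1/m)=:c_m$, it gives, writing $t:=\|\psi\|_{L^2(\widetilde\Omega)}$,
$$c_m\int_{\widetilde\Omega}|\nabla_{\kappa H\Ab}\psi|^2\,dx\le\kappa^2\int_\Omega|\psi|^2\,dx\le\kappa^2 t^2 .$$
It then remains to bound the magnetic energy of $\psi$ from below by (essentially) $\kappa H\,t^2$, which would force $\kappa H\lesssim\kappa^2$, i.e. $H\lesssim\kappa$, contradicting the hypothesis for $C$ large. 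For the lower bound I would compare $\Ab$ with $\Fb$. By \eqref{Linfty} one has $\|\psi\|_{L^4(\widetilde\Omega)}^2\le t$ and $\|\psi\|_{L^2(\Omega)}\le t$, so \eqref{curl} yields the $\psi$–cubic estimate $\|\curl\Ab-1\|_{L^2(\widetilde\Omega)}\le C\,t^{3/2}/H$; combining this with Lemma~\ref{lem-poincare} and the Sobolev embedding $H^1(\widetilde\Omega)\hookrightarrow L^4(\widetilde\Omega)$ gives $\|(\Ab-\Fb)\psi\|_{L^2(\widetilde\Omega)}\le\|\Ab-\Fb\|_{L^4}\|\psi\|_{L^4}\le C\,t^{2}/H$, hence
$$(\kappa H)^2\|(\Ab-\Fb)\psi\|_{L^2(\widetilde\Omega)}^2\le C\kappa^2 t^4\le C|\widetilde\Omega|\,\kappa^2 t^2 .$$
Finally the elementary pointwise bound $|\nabla_{\kappa H\Ab}\psi|^2\ge\tfrac12|\nabla_{\kappa H\Fb}\psi|^2-(\kappa H)^2|\Ab-\Fb|^2|\psi|^2$ together with $\int_{\widetilde\Omega}|\nabla_{\kappa H\Fb}\psi|^2\,dx\ge\mu_1(\kappa H)\,t^2$, where $\mu_1(b)$ is the bottom of the spectrum of the Neumann realization of $-\nabla_{b\Fb}^2$ on $\widetilde\Omega$ (a constant field of strength $b$, since $\curl\Fb\equiv1$), gives $\tfrac12\mu_1(\kappa H)\,t^2\le C\kappa^2 t^2$.

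Now I would invoke strong diamagnetism: $\mu_1(b)\ge c_0 b$ for all large $b$, with $c_0>0$ close to the de Gennes constant (see e.g. \cite{FoHe07}). Under $\kappa\ge 1$ and $H\ge C\kappa$ with $C$ large we have $\kappa H\ge C\kappa^2$ large, so $\mu_1(\kappa H)\ge c_0 C\kappa^2$, and plugging this in forces $t=0$ as soon as $c_0 C$ exceeds twice the constant on the right-hand side, i.e. for $C$ large enough depending only on $a,m,\widetilde\Omega$. Hence $\psi\equiv 0$, and the theorem follows as explained above.

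The one point requiring care is to make sure that passing from the unknown potential $\Ab$ to the explicit field $\Fb$ costs no power of $\kappa$: this is why one must use \eqref{curl} in its sharp form (a bound of order $t^{3/2}/H$, not merely $1/H$), so that the perturbation term $(\kappa H)^2\|(\Ab-\Fb)\psi\|^2$ is only of size $\kappa^2 t^2$ and is absorbed on the same footing as the right-hand side of \eqref{c-pts}. The other ingredient, the linear lower bound $\mu_1(b)\gtrsim b$, is classical, and it is exactly what makes the threshold $H\gtrsim\kappa$ sufficient (rather than $H\gtrsim\kappa^2$).
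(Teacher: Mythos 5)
Your proposal is correct and follows essentially the same route as the paper: starting from the identity \eqref{c-pts} (i.e.\ \eqref{nabla}), transferring the magnetic energy from $\Ab$ to $\Fb$ via \eqref{curl}, Lemma~\ref{lem-poincare} and the Sobolev embedding, and then contradicting the linear lower bound $\mu^N(\kappa H)\gtrsim \kappa H$ for the Neumann ground state energy. The only cosmetic difference is that you estimate the cross term through $\|(\Ab-\Fb)\psi\|_{L^2}\le\|\Ab-\Fb\|_{L^4}\|\psi\|_{L^4}$ while the paper drops $|\psi|$ using $\|\psi\|_\infty\le 1$, and you spell out the (easy) step $\psi\equiv 0\Rightarrow\Ab=\Fb$ which the paper leaves implicit.
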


\begin{proof}
We may assume---after possibly performing a gauge
transformation---that the stationary point satisfies $(\psi, \Ab) \in
H^1(\widetilde\Omega;\mathbb C)\times H^1_{\rm
  div}(\widetilde\Omega;\mathbb R^2)$.
Suppose that $\psi \neq 0$.

Using $\| \psi \|_{\infty} \leq 1$, we have the pointwise inequality
\begin{align}
  \label{eq:22}
  |(\nabla-i\kappa H\Fb)\psi|^2 \leq 2 |(\nabla-i\kappa H\Ab)\psi|^2 +
  2(\kappa H)^2 |\Ab - \Fb|^2.
\end{align}
Upon integration of \eqref{eq:22} and using Lemma~\ref{standard}, the
Sobolev inclusion $H^1 \hookrightarrow L^4$ and
Lemma~\ref{lem-poincare}, we find
\begin{align}
  \label{eq:23}
  \int_{\widetilde \Omega} |(\nabla-i\kappa H\Fb)\psi|^2\,dx \leq
  C \kappa^2 \| \psi \|_{L^2(\widetilde \Omega)}^2,
\end{align}
for some constant $C>0$.
This implies---since $\psi \neq 0$ by assumption---that the lowest Neumann eigenvalue $\mu^N(\kappa H)$ of
$-(\nabla-i\kappa H\Fb)^2$ in $L^2(\widetilde \Omega)$ satisfies
\begin{align}
  \label{eq:24}
  \mu^N(\kappa H) \leq C \kappa^2.
\end{align}
However, since $\widetilde \Omega$ is smooth, we have (see \cite{FoHe07})
\begin{align}
  \label{eq:25}
  \mu^N(\kappa H) \geq C'  \kappa H\,,\quad\forall~H\geq H_0,
\end{align}
for positive constants $C'$ and $H_0$ independent from $\kappa$.
Combining \eqref{eq:25} and \eqref{eq:24} yields the result.
\end{proof}

In the next theorem, we show that the $L^4$ analogue of
\eqref{partition} holds for critical points.

\begin{thm}\label{partition-L4}
Let $(\psi,\Aa)\in H^1(\widetilde\Omega;\mathbb C)\times H^1_{\rm
div}(\widetilde\Omega;\mathbb R^2)$ be a weak solution of (\ref{GL-eq}). Then the
following estimate holds,
$$a\int_{\widetilde\Omega\setminus\Omega}|\psi|^4\, d  x\leq
\int_{\Omega}|\psi|^4\, d  x\,.$$
\end{thm}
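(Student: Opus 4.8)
The plan is to test the weak formulation \eqref{eq:15} against $\phi=|\psi|^2\psi$; this is the natural $L^4$-analogue of the choice $\phi=\psi$ that produced \eqref{c-pts}. Since $\psi\in H^1(\widetilde\Omega;\mathbb C)\cap L^\infty(\widetilde\Omega)$ by \eqref{Linfty}, the function $|\psi|^2\psi$ again lies in $H^1(\widetilde\Omega;\mathbb C)\cap L^\infty(\widetilde\Omega)$, hence is an admissible test function, and it satisfies the magnetic Leibniz rule
$$(\nabla-i\kappa H\Ab)\big(|\psi|^2\psi\big)=|\psi|^2\,(\nabla-i\kappa H\Ab)\psi+\psi\,\nabla\big(|\psi|^2\big)\,.$$
The one point that needs care here is the rigorous justification of this identity for a merely $H^1\cap L^\infty$ function; it follows from the standard chain rule for compositions of $C^1$ maps with $H^1$ functions (via truncation/approximation), or, if preferred, from the piecewise $H^2$-regularity of critical points recorded in Lemma~\ref{reg-H2}.

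With $\phi=|\psi|^2\psi$ inserted into \eqref{eq:15}, I would take the real part of the resulting identity. The potential part is already real, namely $(V_a\psi+\mathbf 1_\Omega|\psi|^2\psi)\overline{|\psi|^2\psi}=V_a|\psi|^4+\mathbf 1_\Omega|\psi|^6$. For the gradient part, the Leibniz rule above gives
$$\operatorname{Re}\Big[(\nabla-i\kappa H\Ab)\psi\cdot\overline{(\nabla-i\kappa H\Ab)(|\psi|^2\psi)}\Big]=|\psi|^2\,|(\nabla-i\kappa H\Ab)\psi|^2+\nabla(|\psi|^2)\cdot\operatorname{Re}\big(\overline\psi\,(\nabla-i\kappa H\Ab)\psi\big)\,,$$
and the key algebraic observation is that the magnetic term drops out of $\operatorname{Re}(\overline\psi\,(\nabla-i\kappa H\Ab)\psi)=\operatorname{Re}(\overline\psi\,\nabla\psi)=\tfrac12\nabla(|\psi|^2)$, so the right-hand side equals $|\psi|^2|(\nabla-i\kappa H\Ab)\psi|^2+\tfrac12|\nabla(|\psi|^2)|^2$. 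Collecting terms, the real part of the tested equation becomes
$$\int_{\widetilde\Omega}w_m\Big(|\psi|^2\,|(\nabla-i\kappa H\Ab)\psi|^2+\tfrac12\big|\nabla(|\psi|^2)\big|^2\Big)\,dx+\kappa^2\int_{\widetilde\Omega}V_a|\psi|^4\,dx+\kappa^2\int_{\Omega}|\psi|^6\,dx=0\,.$$

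Finally, I would split $\int_{\widetilde\Omega}V_a|\psi|^4\,dx=a\int_{\widetilde\Omega\setminus\Omega}|\psi|^4\,dx-\int_{\Omega}|\psi|^4\,dx$ according to \eqref{w-V}, so that
$$\kappa^2a\int_{\widetilde\Omega\setminus\Omega}|\psi|^4\,dx=\kappa^2\int_{\Omega}|\psi|^4\,dx-\kappa^2\int_{\Omega}|\psi|^6\,dx-\int_{\widetilde\Omega}w_m\Big(|\psi|^2\,|(\nabla-i\kappa H\Ab)\psi|^2+\tfrac12\big|\nabla(|\psi|^2)\big|^2\Big)\,dx\,,$$
and since $w_m>0$, the last two terms on the right-hand side are $\le 0$; dividing by $\kappa^2>0$ yields the claimed inequality. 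Thus the main (and essentially only) obstacle is the admissibility and Leibniz-rule step mentioned above; once that is secured the conclusion is a one-line energy identity. As a byproduct, the displayed identity also gives the sharper bound $a\int_{\widetilde\Omega\setminus\Omega}|\psi|^4\,dx\le\int_{\Omega}|\psi|^4\,dx-\int_{\Omega}|\psi|^6\,dx$.
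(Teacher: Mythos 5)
Your proof is correct, and it reaches the paper's key energy identity by a genuinely shorter route. The paper first shows that $u=|\psi|^2$ is a weak solution of a transmission problem
$-\tfrac12\operatorname{div}(w_m\nabla u)+\kappa^2(V_a+\mathbf 1_\Omega u)u+\tfrac12 w_m|(\nabla-i\kappa H\Aa)\psi|^2=0$;
this requires the piecewise $H^2$-regularity of Lemma~\ref{reg-H2} in order to make sense of, and verify, the matching condition $\mathcal T^{\rm int}_{\partial\Omega}(\nu\cdot\nabla u)=\tfrac1m\mathcal T^{\rm ext}_{\partial\Omega}(\nu\cdot\nabla u)$, after which the equation is tested against $u$ itself. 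You instead test the weak Ginzburg--Landau equation \eqref{eq:15} directly with $\phi=|\psi|^2\psi$ and take real parts; after the cancellation $\operatorname{Re}(\overline\psi\,(\nabla-i\kappa H\Ab)\psi)=\tfrac12\nabla(|\psi|^2)$ this produces exactly the same identity (up to an overall factor $2$), with every discarded term manifestly nonnegative. The admissibility point you flag is genuinely the only delicate step, but it is settled by the standard fact that the product of $H^1\cap L^\infty$ functions on a bounded Lipschitz domain is again in $H^1\cap L^\infty$ with the usual product rule (using $\|\psi\|_\infty\le1$ from \eqref{Linfty}); in particular you do not need Lemma~\ref{reg-H2} or any transmission-condition analysis at all. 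What your route buys is economy and robustness at the level of weak solutions; what the paper's route buys is the elliptic equation satisfied by $|\psi|^2$ itself, which is of independent use (e.g.\ for maximum-principle arguments), together with the regularity statements of Lemma~\ref{reg-H2}. Your concluding remark that the argument actually yields the sharper bound $a\int_{\widetilde\Omega\setminus\Omega}|\psi|^4\,dx\le\int_\Omega|\psi|^4\,dx-\int_\Omega|\psi|^6\,dx$ is likewise correct and is implicit in the paper's identity as well.
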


The estimate in Theorem~\ref{partition-L4} is an essential
ingredient in proving Theorem~\ref{loc=glo} below (which links the
existence of non-trivial critical points to a spectral condition).
In order to prove Theorem~\ref{partition-L4}, we need to establish
some regularity properties for solutions of (\ref{GL-eq}).

\begin{lem}\label{reg-H2}
Let $(\psi,\Aa)\in H^1(\widetilde\Omega;\mathbb C)\times H^1_{\rm
div}(\widetilde\Omega;\mathbb R^2)$ be a weak solution of
(\ref{GL-eq}). Then,
\begin{enumerate}
\item $(\psi,\Aa)\in H^2(\widetilde\Omega\setminus\partial\Omega\,;\mathbb C)\times
H^2(\widetilde \Omega\,;\mathbb R^2)$\,.
\item $(\psi,\Aa)\in C^\infty(\widetilde\Omega\setminus\partial\Omega\,;\mathbb C)\times
C^\infty(\widetilde\Omega\setminus\partial\Omega\,;\mathbb R^2)$.
\end{enumerate}
\end{lem}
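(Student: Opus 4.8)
The plan is to prove the regularity by treating the two equations in \eqref{GL-eq} separately: the equation for $\Ab$ has smooth coefficients on all of $\widetilde\Omega$, whereas the equation for $\psi$ must be viewed as an elliptic \emph{transmission (diffraction) problem} across the interface $\partial\Omega$, since the coefficient $w_m$ and the potential $V_a$ jump there. One then bootstraps. As a preliminary step, note that Lemma~\ref{standard} gives $\psi\in L^\infty(\widetilde\Omega)$, and $\psi\in H^1(\widetilde\Omega)$ together with the two–dimensional Sobolev embedding gives $\psi\in L^p(\widetilde\Omega)$ for all $p<\infty$; likewise $\Ab-\Fb\in H^1\hookrightarrow L^p$ for all $p$, and hence $\nabla_{\kappa H\Ab}\psi=\nabla\psi-i\kappa H\Ab\psi\in L^2(\widetilde\Omega)$. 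Consequently the current $\mathbf j:=(\kappa H)^{-1}w_m\,{\rm Im}\big(\overline\psi\,\nabla_{\kappa H\Ab}\psi\big)$ on the right-hand side of the second equation of \eqref{GL-eq} lies in $L^2(\widetilde\Omega)$.

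First I would treat $\Ab$. Writing $B=\curl\Ab-1=\curl(\Ab-\Fb)$, the second equation of \eqref{GL-eq} reads $\nabla^\bot B=\mathbf j\in L^2(\widetilde\Omega)$, and since $B=\curl\Ab\in L^2$ one gets $B\in H^1(\widetilde\Omega)$, with $B=0$ on $\partial\widetilde\Omega$. As $\Ab-\Fb$ is divergence free, satisfies $\nu\cdot\nabla(\Ab-\Fb)=0$ on $\partial\widetilde\Omega$, and has curl $B\in H^1$, the standard elliptic estimate for the div--curl system on the smooth simply connected domain $\widetilde\Omega$ (see \cite{Gi,GiPh} and references therein) yields $\Ab-\Fb\in H^2(\widetilde\Omega;\mathbb R^2)$, in particular $\Ab\in L^\infty\cap W^{1,p}$ for every $p<\infty$.

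Next I would treat $\psi$. Expanding the magnetic Laplacian and using ${\rm div}\,\Ab=0$, the first equation of \eqref{GL-eq} takes, on each side of $\partial\Omega$, the form
\[
-\Delta\psi \;=\; -2i\kappa H\,\Ab\cdot\nabla\psi-(\kappa H)^2|\Ab|^2\psi+g,
\]
with $g=\kappa^2(\psi-|\psi|^2\psi)$ in $\Omega$ and $g=-ma\kappa^2\psi$ in $\widetilde\Omega\setminus\overline\Omega$; by the previous steps the whole right-hand side lies in $L^2$ separately in $\Omega$ and in $\widetilde\Omega\setminus\overline\Omega$. The weak formulation \eqref{eq:15} encodes, on the interior interface $\partial\Omega$, the transmission conditions that $\psi$ has no jump (it belongs to $H^1(\widetilde\Omega)$) and that the conormal derivative $w_m\,\nu\cdot\nabla_{\kappa H\Ab}\psi$ has no jump, together with the Neumann condition $\nu\cdot\nabla_{\kappa H\Ab}\psi=0$ on $\partial\widetilde\Omega$. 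Away from $\partial\Omega$ the coefficients are smooth and interior (resp. Neumann) elliptic regularity applies directly; near $\partial\Omega$ one localizes by a partition of unity, flattens $\partial\Omega$, and invokes the classical $H^2$-regularity theory for second-order elliptic transmission problems with $L^2$ data, the magnetic first-order terms and the nonlinearity $|\psi|^2\psi\in L^\infty$ being harmless lower-order perturbations of the principal part. This gives $\psi\in H^2(\widetilde\Omega\setminus\partial\Omega;\mathbb C)$, which together with Step~2 is assertion (1).

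Finally, smoothness follows by iterating. From $\psi\in H^2(\widetilde\Omega\setminus\partial\Omega)$ one gets $\nabla\psi\in L^p$ on each side for all $p<\infty$, hence $\mathbf j\in L^p(\widetilde\Omega)$, hence $B\in W^{1,p}$ and $\Ab-\Fb\in W^{2,p}(\widetilde\Omega)$ for all $p$, so $\Ab\in C^{1,\alpha}$. Then the right-hand side in the displayed equation lies in $L^p$ on each side, and the $W^{2,p}$-theory for the transmission problem gives $\psi\in W^{2,p}(\widetilde\Omega\setminus\partial\Omega)$, hence $\psi\in C^{1,\alpha}$ on each side; now $\nabla\psi$ is Hölder on each side, so $\mathbf j\in C^{0,\alpha}$, $\Ab-\Fb\in C^{2,\alpha}$, the right-hand side is $C^{0,\alpha}$ on each side, and Schauder regularity for the transmission problem gives $\psi\in C^{2,\alpha}$ on each side. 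Repeating this loop upgrades the regularity of $\psi$ and $\Ab$ indefinitely, giving $(\psi,\Ab)\in C^\infty$ on $\widetilde\Omega\setminus\partial\Omega$, which is assertion (2). The main obstacle is exactly the regularity of the transmission problem for $\psi$ in Step~3 (and its $W^{2,p}$ and $C^{2,\alpha}$ counterparts used in the bootstrap): because $w_m$ jumps across $\partial\Omega$ one can only hope for $H^2$ on each side separately, so one must set up the correct conormal jump condition in local coordinates and appeal to the appropriate transmission-regularity result; the remaining steps are routine.
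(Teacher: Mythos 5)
Your proposal is correct in outline and shares the paper's overall architecture (div--curl elliptic regularity for $\Ab$, a transmission problem across $\partial\Omega$ for $\psi$, bootstrap for interior smoothness), but at the crucial interface step it takes a genuinely different route. The paper does not invoke a general transmission-regularity theorem for the magnetic operator: it first applies a gauge transformation, solving $-\Delta\chi=0$ in $\Omega$ with $\nu\cdot\nabla\chi=\nu\cdot\Ab$ on $\partial\Omega$ and extending $\chi$ to $\widetilde\chi\in W^{2,p}(\widetilde\Omega)$ supported away from $\partial\widetilde\Omega$, so that the transmission condition for the gauged order parameter becomes the $\Ab$-independent relation $\mathcal T_{\partial\Omega}^{\rm int}(\nu\cdot\nabla\varphi)=\frac1m\mathcal T_{\partial\Omega}^{\rm ext}(\nu\cdot\nabla\varphi)$, and only then quotes a ready-made $H^2$ estimate (\cite[Theorem~B.1]{Kach2}) for the scalar problem $-\mathrm{div}(w_m\nabla\varphi)=f$ with $f\in L^2(\widetilde\Omega)$. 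The point of that gauge step is exactly the issue your phrase ``harmless lower-order perturbations of the principal part'' glosses over: if you move the magnetic first-order terms to the right-hand side and work with $-\mathrm{div}(w_m\nabla\psi)$, the conormal jump condition satisfied by $\psi$ itself is \emph{inhomogeneous}, namely $\mathcal T_{\partial\Omega}^{\rm int}(\partial_\nu\psi)-\frac1m\mathcal T_{\partial\Omega}^{\rm ext}(\partial_\nu\psi)= i\kappa H\bigl(1-\frac1m\bigr)(\nu\cdot\Ab)\,\psi$ on $\partial\Omega$, since $\nu\cdot\Ab$ has no reason to vanish there. Your argument therefore goes through only if the transmission (diffraction-problem) regularity result you invoke genuinely covers divergence-form operators with piecewise-smooth complex lower-order coefficients and the full conormal flux $w_m\,\nu\cdot\nabla_{\kappa H\Ab}\psi$ (as the classical diffraction-problem literature does), or allows inhomogeneous $H^{1/2}$ flux data; you do acknowledge the need to ``set up the correct conormal jump condition,'' but you should either cite such a statement explicitly or use the paper's gauge trick, which reduces matters to the homogeneous scalar case at the cost of one harmonic extension. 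Two further minor remarks: your treatment of $\Ab$ via $\nabla^\bot(\curl\Ab-1)=\mathbf j\in L^2$ plus the div--curl estimate is equivalent to the paper's reduction to $-\Delta\Ab=g$ with \cite{ADN}; and for assertion (2) only interior elliptic regularity is needed (the coefficients are smooth away from $\partial\Omega$, and the paper simply cites the standard bootstrap of \cite[Proposition~3.8]{SaSe}), so the $W^{2,p}$ and Schauder transmission steps in your bootstrap loop are stronger than what the lemma requires.
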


Let us emphasize that the complete regularity of $(\psi,\Aa)$ up to
the boundary of $\Omega$ does not follow from standard regularity
theory for elliptic PDE, and hence deserves to be studied
independently.

\begin{proof}[Proof of Lemma~\ref{reg-H2}.]
The interior regularity of $(\psi,\Aa)$ (statement (2) above) is
obtained through a
standard bootstrapping argument, see \cite[Proposition~3.8]{SaSe}.

We move now to the $H^2$ regularity. Since $\Aa\in H^1_{\rm div}
(\widetilde\Omega)$, the equation for $\Aa$ in (\ref{GL-eq}) becomes,
$$-\Delta \Aa=g(x)\quad{\rm in}~\widetilde\Omega\,,$$
coupled with the boundary conditions,
$${\rm curl}\,\Aa=1\,,\quad\nu\cdot\Aa=0\,,
\quad{\rm on}~\partial\widetilde\Omega\,.$$
Here
$$g(x):=(\kappa H)^{-1}w_m(x)\left(i\psi,(\nabla-i\kappa
H\Aa)\psi\right)\in L^2(\widetilde\Omega)\,.
$$
Now $\Aa\in H^2(\widetilde\Omega)$ follows from the
$W^{k,p}$-regularity of the ${\rm curl}$-${\rm div}$ system (see
\cite{ADN}).

To obtain the regularity of $\psi$ one has to be more careful, since
$\psi$ satisfies formally a transmission condition on
$\partial\Omega$,
\begin{equation}\label{eq-interface}
\mathcal T_{\partial\Omega}^{\rm int}(\nu\cdot(\nabla -i\kappa
H\Aa)\psi)=\frac1m \mathcal T_{\partial\Omega}^{\rm
ext}(\nu\cdot(\nabla-i\kappa H\Aa)\psi)\,.
\end{equation}
Here
\begin{equation}\label{eq-trace}
\mathcal T_{\partial\Omega}^{\rm int}:H^1(\Omega)\to
L^2(\partial\Omega)\,,\quad \mathcal T_{\partial\Omega}^{\rm
ext}:H^1(\mathbb R^2\setminus\Omega)\to
L^2(\partial\Omega)\end{equation} are respectively, the `interior'
and `exterior' trace operators, and $\nu$ is the outward unit normal
vector of $\partial\Omega$.

The crucial point is now to apply a suitable gauge transformation
which makes the transmission condition (\ref{eq-interface})
independent of the vector potential $\Aa$.

Let $\chi$ be the solution of the following boundary value problem,
$$
-\Delta\chi=0\quad{\rm in}~\Omega\,,\quad
\nu\cdot\nabla\chi=\nu\cdot \Ab\quad{\rm on}~\partial\Omega\,.$$
Since $\Ab\in H^2(\widetilde\Omega)$, standard regularity theory
gives $\chi\in W^{2,p}(\Omega)$ for every $p\geq1$. Let us fix a
choice of $p>2$ such that Sobolev embedding gives $W^{2,p}\subset
C^{1,\alpha}$ for some $\alpha\in]0,1[$.

Given a smooth domain $K$ such that $\overline\Omega\subset K\subset
\overline K\subset\widetilde\Omega$, we can extend $\chi$ to a
function $\widetilde\chi\in W^{2,p}(\widetilde\Omega)$ such that
${\rm supp}\,\chi\subset K$ (see Remark on p.257 in \cite{Ev}).

Now, defining,
$$\varphi=\psi e^{i\widetilde\chi}\,,\quad\Bb= \Ab -\nabla\widetilde\chi\,,$$
the condition (\ref{eq-interface}) reads formally,
\begin{equation}\label{eq-interface1}
\mathcal T_{\partial\Omega}^{\rm int}(\nu\cdot\nabla\varphi)=
\frac1m\mathcal T_{\partial\Omega}^{\rm
ext}(\nu\cdot\nabla\varphi)\,.
\end{equation}
Actually, the equation for $\varphi$ becomes,
$$-{\rm div}\left(w_m(x)\nabla \varphi\right)=f\quad{\rm in~}\widetilde\Omega\,,$$
with
\begin{multline*}
  f(x)=-(\kappa H)w_m(x)\left[2i(\Bb\cdot\nabla)+i({\rm
    div}\,\Bb)
+\kappa H|\Bb|^2\right]\varphi\\
-\kappa^2\left(V_a(x)\varphi+\mathbf
    1_\Omega(x)
|\varphi|^2\varphi\right)\,.
\end{multline*}
Here $w_m$ and $V_a$ are as in (\ref{w-V}). Moreover, the equation
for $\varphi$   is supplemented with the boundary condition,
$$\nu\cdot\nabla \varphi=0\quad{\rm on~}\partial\widetilde\Omega\,.$$
The obtained equation for $\varphi$ is of the form studied in
\cite[Appendix~B]{Kach2}, where $L^2$-type estimates are shown to hold for
the solutions. Let us see how we will implement the aforementioned point.

Lemma~\ref{standard} gives $|\varphi|\leq
    1$. Moreover, since
$\varphi\in H^1(\widetilde\Omega)$, $\Ab\in H^2(\widetilde\Omega)$
and $\Bb=\Ab+\nabla\widetilde\chi$ is bounded, we deduce that $f\in
    L^2(\widetilde\Omega)$. Applying now Theorem~B.1 in
    \cite{Kach2}, we deduce that $\varphi\in
    H^2(\widetilde\Omega\setminus\partial\Omega)$. Since $\psi=\varphi e^{-i\widetilde\chi}$
    and $\widetilde\chi\in W^{2,p}(\widetilde\Omega)\subset C^{1,\alpha}\big(\,\overline{\widetilde\Omega}\,\big)$, then
    $\psi\in H^2(\widetilde\Omega\setminus\partial\Omega)$.
\end{proof}

\begin{proof}[Proof of Theorem~\ref{partition-L4}]
Thanks to Lemma~\ref{reg-H2}, $(\psi,\Aa)\in
H^1(\widetilde\Omega;\mathbb C)\times H^1_{\rm
div}(\widetilde\Omega;\mathbb R^2)$ being a solution of
(\ref{GL-eq}), the function $\psi\in
C^\infty(\widetilde\Omega\setminus\partial\Omega)$, and $(\psi,\Aa)$
is a strong solution of
(\ref{GL-eq}) in $\widetilde\Omega\setminus\partial\Omega$.

Consequently, the function,
$$u=|\psi|^2=\psi\,\overline\psi\in C^\infty(\widetilde\Omega\setminus\partial\Omega\,;\mathbb
R).$$ It is easy to verify that,
$$\frac12\Delta|\psi|^2={\rm Re}\big( \overline\psi(\nabla-i\kappa H\Aa)^2\psi\big)+|(\nabla-i\kappa H\Aa)\psi|^2\,,\quad\forall~x
\in\widetilde\Omega\setminus\partial\Omega\,.$$ Thus, defining the
function,
$$f(x)=\frac{1}{2} w_m(x)|(\nabla-i\kappa H\Aa)\psi(x)|^2,$$
we get that $u=|\psi|^2$ is a strong solution in
$\widetilde\Omega\setminus\partial\Omega$ of the equation,
\begin{equation}\label{eq-|psi|^2}
-\frac12{\rm div}\left(w_m\nabla u\right)+\kappa^2(V_a +\mathbf 1_\Omega u)u+f = 0\,.
\end{equation}
Here we remind the reader that the functions $w_m$ and $V_a$ are
defined in (\ref{w-V}).

Let us show that $u$ is a weak solution of (\ref{eq-|psi|^2}) in
$\widetilde\Omega$. To that end, we need only  verify that
\begin{equation}\label{transmission}
\mathcal T_{\partial\Omega}^{\rm int}(\nu\cdot\nabla u)=\frac1m
\mathcal T_{\partial\Omega}^{\rm ext}(\nu\cdot\nabla u)\quad{\rm
in}~L^2(\partial\Omega)\,,\end{equation} where $\mathcal
T_{\partial\Omega}^{\rm int}$ and $\mathcal T_{\partial\Omega}^{\rm
ext}$ are the trace operators introduced in (\ref{eq-trace}), and
$\nu$ is the outward unit normal vector of $\partial\Omega$.

Notice that $\nabla u=2\,{\rm Re}\,(\psi\nabla\overline\psi)\in
H^1(\widetilde\Omega\setminus\partial\Omega)$ since $\psi\in
H^2(\widetilde\Omega\setminus\partial\Omega;\mathbb C)$ and
$\widetilde\Omega\subset\mathbb R^2$, hence the trace of
$\nu\cdot\nabla u$ is
well defined in the usual sense.

Now, it is easy to verify that,
$$\nabla u =2\,{\rm Re}\,(\psi\nabla\overline\psi)=2\,{\rm
Re}\,(\psi\overline{(\nabla-i\kappa H\Aa)\psi}).$$ On the other
hand, since $(\psi,\Aa)\in
H^2(\widetilde\Omega\setminus\partial\Omega;\mathbb C)\times
H^2(\widetilde\Omega;\mathbb R^2)$ and weak solutions of
(\ref{GL-eq}), they satisfy in particular,
$$\mathcal T_{\partial\Omega}^{\rm int}(\nu\cdot(\nabla -i\kappa H\Aa)\psi)=\frac1m \mathcal
T_{\partial\Omega}^{\rm ext}(\nu\cdot(\nabla-i\kappa H\Aa)\psi)$$ in
$L^2(\partial\Omega)$. This shows that (\ref{transmission}) also
holds in $L^2(\partial\Omega)$ and consequently $u=|\psi|^2$ is a
weak solution in $\widetilde\Omega$ of (\ref{eq-|psi|^2}).

Therefore, multiplying (\ref{eq-|psi|^2}) by $u$ and integrating, we
get,
$$\int_{\widetilde\Omega}\left(\frac12w_m(x)|\nabla
u|^2+V_a(x)u^2+1_\Omega(x)u^3+f(x)u\right)\, d  x=0\,,
$$
with $u\geq0$ and $f\geq 0$. This yields the  estimate of
Lemma~\ref{partition-L4}.\end{proof}

\subsection{Weak Decay Estimate and applications}

Just as in \cite{BoFo}, we can derive the following weak decay
estimate.

\begin{lem}\label{lem-BoFo}
Assume that $a>0$ and $m>0$.
There exist positive constants $C$ and $C'$ such that,
if $(\psi,\Ab)$ is a solution of (\ref{GL-eq}) with:
$$\kappa(H-\kappa)\geq C\,,$$
then the following estimate holds:
\begin{align*}
\max\left(1,\frac{a\kappa}{H-\kappa}\right)
\|\psi\|_{L^2(\widetilde\Omega)}^2&\leq
C\int_{\{x\in\widetilde\Omega:\sqrt{\kappa(H-\kappa)}\,{\rm
    dist}(x,\partial\Omega)\leq1\}}|\psi(x)|^2\, d  x\\
&\leq
\frac{C'}{\kappa(H-\kappa)}\,.
\end{align*}
\end{lem}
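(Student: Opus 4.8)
This is a weak, Agmon-type decay estimate of the sort established in \cite{BoFo}; the idea is to combine the energy identity \eqref{c-pts} with a localization on the natural length scale $\ell:=\bigl(\kappa(H-\kappa)\bigr)^{-1/2}$ and with the basic bounds of Lemma~\ref{standard} together with Theorem~\ref{partition-L4} to dispose of the vector-potential contribution. First I would reduce to the regime $\kappa\le H\le C_0\kappa$: by Theorem~\ref{thm-GP-type} a non-trivial solution can occur only there, so $\kappa(H-\kappa)\le C_0\kappa^2$, and combined with the hypothesis $\kappa(H-\kappa)\ge C$ (with $C$ chosen large) this forces $\kappa$ to be large — a fact used below to absorb error terms — and gives $\kappa(H-\kappa)+\kappa^2\le C_1\kappa^2$. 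The estimate is trivial if $\psi\equiv 0$, so we assume $\psi\not\equiv 0$.

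Next, choose a partition of unity $\chi_1^2+\chi_2^2=1$ on $\widetilde\Omega$ with $\chi_1\equiv 1$ on $\{\operatorname{dist}(\cdot,\partial\Omega)\le\ell/2\}$, $\operatorname{supp}\chi_1\subset\mathcal V:=\{\operatorname{dist}(\cdot,\partial\Omega)\le\ell\}$, and $|\nabla\chi_1|+|\nabla\chi_2|\le C\ell^{-1}$, so that $\nabla\chi_j$ is supported in $\mathcal V$ and $\sum_j|\nabla\chi_j|^2\le C\ell^{-2}=C\kappa(H-\kappa)$ there. Inserting the IMS localization formula into \eqref{c-pts} and dropping the non-negative quartic term gives
\[
\sum_{j=1,2}\Bigl(\int_{\widetilde\Omega}w_m\bigl|(\nabla-i\kappa H\Ab)(\chi_j\psi)\bigr|^2+\kappa^2\int_{\widetilde\Omega}V_a\,|\chi_j\psi|^2\Bigr)\le C\,\kappa(H-\kappa)\int_{\mathcal V}|\psi|^2 .
\]
For $j=2$: on $\Omega$ the function $v:=\mathbf 1_\Omega\,\chi_2\psi\in H^1_0(\Omega)$ vanishes near $\partial\Omega$, so the pointwise (Landau) lower bound $\int_\Omega|(\nabla-i\kappa H\Ab)v|^2\ge\kappa H\int_\Omega(\curl\Ab)\,|v|^2$ applies; writing $\curl\Ab=1+(\curl\Ab-1)$ and bounding the fluctuation by $\kappa H\|\curl\Ab-1\|_{L^2(\widetilde\Omega)}\|v\|_{L^4(\Omega)}^2$, which is controlled through \eqref{curl}, \eqref{L2}, \eqref{Linfty} and Theorem~\ref{partition-L4} (these give $\|\psi\|_{L^4(\widetilde\Omega)}^2\le C\|\psi\|_{L^2(\Omega)}$, hence an error $\le C\kappa\|\psi\|_{L^2(\Omega)}^{5/2}\le C\kappa\|\psi\|_{L^2(\Omega)}^2$), one arrives at $\int_\Omega|(\nabla-i\kappa H\Ab)v|^2-\kappa^2\|v\|_{L^2(\Omega)}^2\ge\kappa(H-\kappa)\|v\|_{L^2(\Omega)}^2-C\kappa\|\psi\|_{L^2(\Omega)}^2$; on $\widetilde\Omega\setminus\Omega$ one keeps only the positive potential $a\kappa^2\int_{\widetilde\Omega\setminus\Omega}|\chi_2\psi|^2$. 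For $j=1$ the magnetic term is non-negative and $\kappa^2\int V_a|\chi_1\psi|^2\ge-\kappa^2\int_{\mathcal V}|\psi|^2$. Combining these bounds, using $\|\psi\|_{L^2(\Omega)}^2\le\int_{\mathcal V}|\psi|^2+\|v\|_{L^2(\Omega)}^2$ and $\kappa(H-\kappa)+\kappa^2\le C_1\kappa^2$, and absorbing the remaining error into the left-hand side — which is the delicate point and uses both the largeness of $\kappa$ and the sharp form of \eqref{curl} afforded by Theorem~\ref{partition-L4} — yields the first inequality, the factor $\max\bigl(1,a\kappa/(H-\kappa)\bigr)$ arising from comparing the two spectral gaps $\kappa(H-\kappa)$ (in $\Omega$) and $a\kappa^2$ (in $\widetilde\Omega\setminus\Omega$).

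The second inequality then follows by feeding the pointwise bound $\|\psi\|_{L^\infty(\widetilde\Omega)}\le 1$, the $L^4$-estimate $\|\psi\|_{L^4(\widetilde\Omega)}^2\le C\|\psi\|_{L^2(\Omega)}$ (from \eqref{L2} and Theorem~\ref{partition-L4}) and the volume bound $|\mathcal V|\le C\ell$ into the first inequality, via Cauchy--Schwarz and a short bootstrap. The main obstacle throughout is precisely the vector-potential term together with the boundary-layer region near $\partial\Omega$, where no spectral gap is available: these must be controlled using the non-trivial $L^4$-partition estimate of Theorem~\ref{partition-L4} (which makes the magnetic-field fluctuation $\curl\Ab-1$ small enough in $L^2$) and the largeness of $\kappa$ guaranteed by Theorem~\ref{thm-GP-type}.
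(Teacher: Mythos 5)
Your overall strategy (cutting off at distance $\sim(\kappa(H-\kappa))^{-1/2}$ from $\partial\Omega$, using the Avron--Herbst--Simon lower bound $\int|(\nabla-i\kappa H\Ab)v|^2\geq\kappa H\int(\curl\Ab)|v|^2$ on the piece vanishing near $\partial\Omega$, and controlling $\curl\Ab-1$ via Lemma~\ref{standard}) is the right one, but the way you dispose of the error terms fails under the stated hypothesis, which is only $\kappa(H-\kappa)\geq C$ and in particular allows $H-\kappa$ to be as small as $C/\kappa$. Two steps break down. First, you drop the quartic term at the outset and then bound the curl-fluctuation error by $C\kappa\|\psi\|_{L^2(\Omega)}^{5/2}\leq C\kappa\|\psi\|_{L^2(\Omega)}^2$, i.e.\ you discard the localization in $\|\chi_2\psi\|_{L^4(\Omega)}^2$ and keep a full factor $\kappa$; this term cannot be absorbed by the available gap $\kappa(H-\kappa)\|v\|_{L^2(\Omega)}^2$ (nor shipped to the right-hand side with an absolute constant) unless $H-\kappa\gtrsim 1$. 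The paper's mechanism is precisely the term you threw away: testing the equation with $\chi_\lambda^2\psi$ keeps $-\kappa^2\int\chi_\lambda^2|\psi|^4$, the error is retained as $c\kappa\|\psi\|_{L^2(\Omega)}\|\chi_\lambda\psi\|_{L^4(\Omega)}^2$ and Young-split into $c^2\|\psi\|_{L^2(\Omega)}^2+\tfrac{\kappa^2}{4}\|\chi_\lambda\psi\|_{L^4(\Omega)}^4$, the $\kappa^2$-piece being eaten by the quartic term (since $\chi_\lambda^4\leq\chi_\lambda^2$) and the leftover carrying no $\kappa$ at all, so that $\kappa(H-\kappa)\geq 2c^2$ suffices. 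Second, your two-piece IMS applied to the global identity \eqref{c-pts} forces you to face the block $\kappa^2\int V_a|\chi_1\psi|^2\geq-\kappa^2\int_{\mathcal V}|\psi|^2$ on the boundary layer, which is larger than the admissible right-hand side $C\kappa(H-\kappa)\int_{\mathcal V}|\psi|^2$ by the factor $\kappa/(H-\kappa)$; your own normalization ``$\kappa(H-\kappa)+\kappa^2\leq C_1\kappa^2$'' shows you end up working at scale $\kappa^2$, so after dividing by the gap you would only get $\|\psi\|^2_{L^2(\widetilde\Omega)}\leq C\tfrac{\kappa}{H-\kappa}\int_{\mathcal V}|\psi|^2$, not the stated bound with an absolute constant. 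The paper avoids this by never testing against the near-boundary piece: the layer enters only through $|\nabla\chi_\lambda|^2\sim\|\chi'\|_\infty^2\kappa(H-\kappa)\mathbf 1_{\mathcal V}$, which is exactly the right size. Your appeal to ``largeness of $\kappa$'' does not help (it makes both offending terms larger relative to a possibly order-one gap), and the reduction through Theorem~\ref{thm-GP-type} gives $H\leq C_0\kappa$, not $H-\kappa\gtrsim1$ (besides, that theorem is stated for $\kappa\geq1$, so small $\kappa$ with large $H$ is not covered by your reduction, whereas the paper's proof needs no such reduction).

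Two further remarks. The heavy input Theorem~\ref{partition-L4} is not needed here and does not supply the missing smallness: the $L^2$ partition bound \eqref{partition} together with \eqref{Linfty}, \eqref{L2}, \eqref{curl} is all the paper uses, and sharpening $\|\psi\|_{L^4(\widetilde\Omega)}$ to $\|\psi\|_{L^4(\Omega)}$ does not remove the factor $\kappa$ in your error. Finally, your one-line justification of the prefactor (``comparing the two spectral gaps'') only yields, for the total $L^2$ norm, the minimum of $\kappa(H-\kappa)$ and $a\kappa^2$, i.e.\ the gain $a\kappa/(H-\kappa)$ only on the exterior mass; if you want the factor as stated you must explain how the interior and boundary-layer contributions acquire it, which neither your sketch nor a naive reading of the splitting provides.
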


\begin{proof}
The last inequality is an easy consequence of (\ref{Linfty}), so we
will only establish the first one.

 Let $\chi\in C^\infty(\mathbb
R)$ be a standard cut-off function such that,
$$\chi=1\quad{\rm in}~[1,\infty[\,,\quad
\chi=0\quad{\rm in}~]-\infty,1/2]\,.$$
Define
$\lambda=1/\sqrt{\kappa(\kappa-H)}$, and
$$\chi_\lambda(x)=\chi\left(\frac{{\rm
      dist}(x,\partial\Omega)}{\lambda}\right)\,,\quad
x\in\widetilde\Omega\,.$$
A simple calculation yields the following localization formula:
\begin{multline*}
\int_{\widetilde\Omega}w_m(x)\left(|(\nabla-i\kappa
  H\Ab)\chi_{\lambda}\psi|^2-|\nabla\chi_\lambda|^2|\psi|^2\right)\, d  x\\
={\rm Re}
\int_{\widetilde\Omega}\left(w_m(x)\overline{(\nabla-i\kappa H\Ab)\psi}\cdot
(\nabla-i\kappa H\Ab)(\chi_\lambda^2\psi)\right)\, d  x\,,
\end{multline*}
where $w_m$ is introduced in \eqref{w-V}.

Now, we use  the weak formulation of the first G-L equation in
(\ref{GL-eq}) and  get,
\begin{multline}\label{weak-loc}
\int_{\widetilde\Omega}w_m(x)\left(|(\nabla-i\kappa
  H\Ab)\chi_{\lambda}
  \psi|^2-|\nabla\chi_\lambda|^2|\psi|^2\right)\, d  x
+a\kappa^2\int_{\widetilde\Omega\setminus\Omega}\chi_\lambda^2|\psi|^2\, d 
x\\
=\kappa^2\int_\Omega\chi_\lambda^2(1-|\psi|^2)|\psi|^2\, d 
x\,.
\end{multline}
The next step is to give a lower bound to the first term on the left
hand side of \eqref{weak-loc}. This is done via an elementary
inequality from the spectral theory of magnetic Schr\"odinger
operators (see e.g. \cite[Thm 2.9]{AHS} or \cite[Lemma~2.4.1]{FoHe07}).
 Actually, since the function 
$\chi_\lambda$ has compact support not meeting
the boundary $\partial\Omega$, and $w_m=1$ in $\Omega$, the following
inequality holds,
$$\int_{\widetilde\Omega}w_m(x)|(\nabla-i\kappa H\Ab)\chi_\lambda
\psi|^2\, d  x\geq
\kappa H\int_{\Omega}({\rm
  curl}\,\Ab)|\chi_\lambda\psi|^2\, d  x\,.$$
Writing
$\curl \Ab=1+(\curl\Ab-1)$ then applying
a Cauchy-Schwarz inequality, we get,
\begin{eqnarray*}
\int_{\widetilde\Omega}w_m(x)|(\nabla-i\kappa H\Ab) \chi_{\lambda} \psi|^2\, d 
x&\geq&
\kappa H\int_{\Omega}w_m(x)|\chi_\lambda\psi|^2\, d  x\\
&&-\kappa H\|{\rm
  curl}\,\Ab-1\|_{L^2(\Omega)}\|\chi_\lambda\psi\|_{L^4(
\Omega)}^2\,.
\end{eqnarray*}
Implementing the estimates \eqref{curl}, \eqref{L2} and
\eqref{partition} we get, for some constant $c>0$, 
the following lower bound,
\begin{multline}\label{trick-weak-loc}
\int_{\widetilde\Omega}w_m(x)|(\nabla-i\kappa H\Ab)\chi_{\lambda}
\psi|^2\, d 
x\\
\geq\kappa H\int_{\Omega}|\chi_\lambda\psi|^2\, d  x
-c\kappa\|\psi\|_{L^2(\Omega)}
\|\chi_\lambda\psi\|_{L^4(\Omega)}^2\,.
\end{multline}
Upon substitution in (\ref{weak-loc}) and a
rearrangement of terms, we deduce,
\begin{multline*}
\kappa(H-\kappa)\int_\Omega|\chi_\lambda\psi|^2\, d 
x+a\kappa^2\int_{\widetilde\Omega\setminus\Omega}
|\chi_\lambda\psi|^2\, d  x
\leq c\kappa
\|\psi\|_{L^2(\Omega)}
\|\chi_\lambda\psi\|_{L^4(\Omega)}^2\\
+\|\chi'\|^2_{L^\infty(\mathbb
  R)}\lambda^{-2}\int_{\{{\rm dist}(x,\partial\Omega)\leq
  \lambda\}}w_m(x)|\psi|^2\, d 
x-\kappa^2\int_{\Omega}\chi_\lambda ^2|\psi|^4\, d 
x\,.
\end{multline*}
Implementing again a Cauchy-Schwarz inequality, we get
\begin{eqnarray*}
&&\kappa(H-\kappa)\int_\Omega|\chi_\lambda\psi|^2\, d 
x+a\kappa^2\int_{\widetilde\Omega\setminus\Omega}
|\chi_\lambda\psi|^2\, d  x\\
&&\hskip0.5cm
\leq c^2\|\psi\|^2_{L^2(\Omega)}+
\|\chi'\|^2_{L^\infty(\mathbb
  R)}\lambda^{-2}\int_{\{{\rm dist}(x,\partial\Omega)\leq
  \lambda\}}w_m(x)|\psi|^2\, d 
x\\
&&\hskip0.5cm+\kappa^2\int_{\Omega}(\chi_\lambda^4-\chi_\lambda^2)|\psi|^4\, d 
x
\,,
\end{eqnarray*}
where the last term on the right hand side above is negative, since
$0\leq\chi_\lambda\leq 1$.

Decomposing the integral
$\displaystyle\int_{\Omega}|\psi|^2=
\displaystyle\int_{\Omega}|\chi_\lambda\psi|^2+\displaystyle
\int_{\Omega}(1-\chi_\lambda^2)|\psi|^2$,
and assuming that
$$\kappa(H-\kappa)\geq 2c^2\,,$$ we get
\begin{eqnarray*}
&&\frac1{2}\max\left(
\kappa(H-\kappa),a\kappa^2\right)
\int_{\widetilde\Omega}|\chi_\lambda\psi|^2\, d 
x\\
&&\hskip0.5cm\leq \left(c^2
+\max\left(1,\frac1m\right)\|\chi'\|^2_{L^\infty(\mathbb
    R)}\lambda^{-2}\right)\int_{\{{\rm dist}(x,\partial\Omega)\leq
  \lambda\}}|\psi|^2\, d  x\,.
\end{eqnarray*}
Recall that $\lambda=1/\sqrt{\kappa(H-\kappa)}\,$. The conditions
on $\chi$ and $\kappa(H-\kappa)$ imply that
$$ 
\max\left(1,\frac{a\kappa}{H-\kappa}\right)
\int_{\widetilde\Omega}|\chi_\lambda\psi|^2\, d 
x\leq 4\max\left(1,\frac1m\right)\|\chi'\|^2_{L^\infty(\mathbb R)}
\int_{\{{\rm dist}(x,\partial\Omega)\leq
  \lambda\}}|\psi|^2\, d  x\,.$$
Consequently, we get, 
\begin{multline*}
\max\left(1,\frac{a\kappa}{H-\kappa}\right)
\int_{\widetilde\Omega}|\psi|^2\, d  x\\\leq
\left( 4\max\left(1,\frac1m\right)\,\|\chi'\|^2_{L^\infty(\mathbb R)}+1\right)\int_{\{{\rm dist}(x,\partial\Omega)\leq
  \lambda\}}|\psi|^2\, d  x\,.
\end{multline*}
Choosing
$C=\max\left(c^2,4\max\left(1,\frac1m\right)\,\|\chi'\|^2_{L^\infty(\mathbb
    R)}+1\right)$, we get the desired bound.
\end{proof}

The next theorem gives a purely spectral criterion for the existence
of non-trivial critical points of (\ref{GL-func}).

\begin{thm}\label{loc=glo}
Given $a>0$ and $m>1$, there exists $\kappa_0>0$ such that, for all $\kappa\geq\kappa_0$,
the following two statements are equivalent:
\begin{enumerate}
\item There exists a solution $(\psi,\Ab)$ of (\ref{GL-eq}) with
  $\|\psi\|_{L^2(\widetilde\Omega)}\not=0$\,.
\item The parameters $\kappa$ and $H$ satisfy $\mu^{(1)}(\kappa,H)<0$,
  where the eigenvalue $\mu^{(1)}(\kappa,H)$ is introduced in
  (\ref{EV}).
\end{enumerate}
\end{thm}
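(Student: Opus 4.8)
The plan is to prove the two implications separately. The direction $(2)\Rightarrow(1)$ is the easy one: if $\mu^{(1)}(\kappa,H)<0$, then the normal state $(0,\mathbf F)$ is a strict saddle point of $\mathcal G_{\kappa,H}$ (its Hessian, given by the quadratic form $\mathcal Q[\kappa,H]$, is negative in some direction). Since $\mathcal G_{\kappa,H}$ is bounded below and coercive on the (gauge-reduced) energy space, it admits a minimizer; and because the energy at the normal state can be strictly decreased by moving in the negative direction of $\mathcal Q[\kappa,H]$ (to leading order, $\mathcal G_{\kappa,H}(t\phi,\mathbf F)=t^2\mathcal Q[\kappa,H](\phi)+O(t^4)<0=\mathcal G_{\kappa,H}(0,\mathbf F)$ for small $t$), the minimizer is non-trivial. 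A minimizer is a weak solution of \eqref{GL-eq}, so statement~(1) holds. This part requires only $a>0$, $m>0$ and does not use largeness of $\kappa$.

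The substance is in $(1)\Rightarrow(2)$. Suppose $(\psi,\mathbf A)$ solves \eqref{GL-eq} with $\psi\not\equiv 0$; after a gauge transformation assume $\mathbf A\in H^1_{\rm div}(\widetilde\Omega;\mathbb R^2)$. Testing the first G-L equation against $\psi$ (equation \eqref{c-pts}) gives
$$\int_{\widetilde\Omega}w_m|(\nabla-i\kappa H\mathbf A)\psi|^2\,dx+\kappa^2\int_{\widetilde\Omega}V_a|\psi|^2\,dx=-\kappa^2\int_\Omega|\psi|^4\,dx\,.$$
The goal is to show the left-hand side is $<0$, because that left-hand side is $\mathcal Q[\kappa,H](\psi)$ with $\mathbf A$ in place of $\mathbf F$, and then to replace $\mathbf A$ by $\mathbf F$ at an acceptable error cost, so as to conclude $\mathcal Q[\kappa,H](\psi\,e^{i\kappa H\omega})<0$ for a suitable gauge $\omega$, hence $\mu^{(1)}(\kappa,H)<0$. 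Since the left-hand side equals $-\kappa^2\int_\Omega|\psi|^4$, the needed sign is automatic once we know $\psi\not\equiv 0$ on $\Omega$; but one must exclude the pathological possibility $\psi\equiv 0$ on $\Omega$ (so that $\psi$ is supported in the normal region, where $V_a=a>0$). Here Theorem~\ref{partition-L4} is the key input: $a\int_{\widetilde\Omega\setminus\Omega}|\psi|^4\le\int_\Omega|\psi|^4$, so if $\psi\equiv 0$ on $\Omega$ then $\psi\equiv 0$ on all of $\widetilde\Omega$, contradicting $\psi\not\equiv 0$. Thus $\int_\Omega|\psi|^4>0$ and the left-hand side above is strictly negative.

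It remains to pass from $\mathbf A$ to $\mathbf F$ with controlled error, and this is where largeness of $\kappa$ and $m>1$ enter. Write
$$|(\nabla-i\kappa H\mathbf F)(\psi e^{i\kappa H\omega})|^2 = |(\nabla-i\kappa H\mathbf A)\psi|^2 + (\text{cross terms}) + (\kappa H)^2|\mathbf A-\mathbf F-\nabla\omega|^2|\psi|^2$$
and use that by Lemmas~\ref{lem-poincare} and \ref{standard} (estimate \eqref{curl}) one has $\|\mathbf A-\mathbf F\|_{H^1(\widetilde\Omega)}\le C(\kappa H)^{-1}\|\psi\|_{L^2(\Omega)}\|\psi\|_{L^4(\widetilde\Omega)}$, together with $\|\psi\|_\infty\le 1$ and the Sobolev embedding $H^1\hookrightarrow L^4$, to bound the difference $|\mathcal Q[\kappa,H](\psi e^{i\kappa H\omega})-(\text{the integral above})|$ by a quantity that is $o(\kappa^2\int_\Omega|\psi|^4)$ — for which one needs the weak decay estimates of Lemmas~\ref{standard} and \ref{lem-BoFo} (e.g.\ $\|\psi\|_{L^2(\widetilde\Omega)}^2\le C/(\kappa(H-\kappa))$ together with $a\int_{\widetilde\Omega\setminus\Omega}|\psi|^2\le\int_\Omega|\psi|^2$) to trade powers of $\psi$ and $\kappa$ favorably, and the range of $H$ can be restricted to $H\asymp\kappa$ thanks to Theorem~\ref{thm-GP-type} (outside that range statement~(1) fails, so there is nothing to prove). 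Absorbing the error into the strictly negative main term $-\kappa^2\int_\Omega|\psi|^4$ yields $\mathcal Q[\kappa,H](\psi e^{i\kappa H\omega})<0$ and hence $\mu^{(1)}(\kappa,H)<0$. The main obstacle is precisely this last bookkeeping: showing the difference between $\mathcal Q[\kappa,H]$ evaluated on the (gauged) $\psi$ and the quantity $-\kappa^2\int_\Omega|\psi|^4$ is genuinely of lower order, which is why the non-trivial $L^4$-partition estimate of Theorem~\ref{partition-L4} (rather than merely the $L^2$-partition \eqref{partition}) is indispensable and why the hypothesis $m>1$ is used.
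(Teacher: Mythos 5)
Your strategy coincides with the paper's in both directions: $(2)\Rightarrow(1)$ by testing the energy with a small multiple of an eigenfunction, and the core of $(1)\Rightarrow(2)$ by inserting $\phi=\psi$ in the weak equation, identifying the main term $-\kappa^2\int_\Omega|\psi|^4$, using Theorem~\ref{partition-L4} to guarantee it is strictly negative, and then replacing $\Ab$ by $\mathbf F$ with errors of relative size $o(1)$ via \eqref{curl}, Lemma~\ref{lem-poincare}, Sobolev embedding and the weak decay of Lemma~\ref{lem-BoFo}. However, there is a genuine gap in how you dispose of the range of $H$. You claim that Theorem~\ref{thm-GP-type} allows you to restrict to $H\asymp\kappa$ ``because outside that range statement (1) fails.'' That theorem only excludes non-trivial solutions for $H\geq C\kappa$; it says nothing about small $H$. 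For $H\ll\kappa$ (the genuinely superconducting regime) non-trivial solutions certainly exist, so the implication $(1)\Rightarrow(2)$ still has to be established there --- and your quantitative scheme cannot reach it, because Lemma~\ref{lem-BoFo} requires $\kappa(H-\kappa)\geq C$, i.e.\ $H$ above $\kappa$ up to $O(1/\kappa)$; without it the bound $\|\psi\|_{L^2(\widetilde\Omega)}^2\lesssim \kappa^{-1/2}\|\psi\|_{L^4(\Omega)}^2$ that drives your bookkeeping is unavailable, since in that regime $\psi$ is not concentrated in a thin boundary layer.

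The paper closes this regime by a separate, purely spectral observation: for $H<c\kappa$ with $c>0$ small, $\mu^{(1)}(\kappa,H)<0$ holds automatically, because the min-max principle gives $\mu^{(1)}(\kappa,H)\leq \mu^D(\kappa H;\Omega)-\kappa^2$, where $\mu^D(B;\Omega)$ is the lowest Dirichlet eigenvalue of $-(\nabla-iB\mathbf F)^2$ in $\Omega$ and satisfies $\mu^D(B;\Omega)\leq \tilde C\max(B,1)$. Thus in the low-field range statement (2) holds irrespective of statement (1), and only the window $c\kappa\leq H\leq C\kappa$ requires the error analysis you sketch. You need to add this step (or a substitute) for the implication to be complete. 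Two smaller remarks: the extra gauge factor $e^{i\kappa H\omega}$ is unnecessary once $\Ab\in H^1_{\rm div}(\widetilde\Omega;\mathbb R^2)$, since one can test the form \eqref{qf} directly with $\psi$ as the paper does; and the hypothesis $m>1$ is not what makes the error estimates work --- the ingredients you invoke (Lemmas~\ref{standard}, \ref{lem-BoFo} and Theorem~\ref{partition-L4}) are valid for all $m>0$.
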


\begin{proof}
It is well known that the second statement implies the first one.
Actually, we only use $(t\psi_*,0)$, with $t$ sufficiently small and
$\psi_*$ an eigenfunction associated with $\mu^{(1)}(\kappa,H)$, as
a test configuration for the functional (\ref{GL-func}). The
resulting energy will be lower that that of a normal state. Hence a
minimizer, which is a solution of (\ref{GL-eq}),
will be non-trivial.

We assume now that the first statement holds and we show that the
second statement is true provided that $\kappa$ is sufficiently
large.

Thanks to Theorem~\ref{thm-GP-type}, it is sufficient to deal with
applied magnetic fields satisfying $H\leq C\kappa$, for some
constant $C>0$.

On the other hand, if $c>0$ is a sufficiently small constant, then
we can show that
\begin{equation}\label{eq-mu<0}
\mu^{(1)}(\kappa,H)<0\quad{\rm  for ~all~}\quad H<c\kappa\,.
\end{equation}
In order to see this, notice that the variational min-max principle
gives,
$$\mu^{(1)}(\kappa,H)\leq \mu^D(\kappa H;\Omega)-\kappa^2\,,$$
where $\mu^D(\kappa H;\Omega)$ is the lowest Dirichlet eigenvalue of
$- (\nabla-i\kappa H\mathbf F)^2$ in $L^2(\Omega)$. 
Standard estimates on Dirichlet realizations of magnetic operators
(see \cite{HeMo1}) yield the existence of a constant $\tilde C>0$ such
that $$\mu^D(B;\Omega)\leq \tilde C\max(B,1)\quad\forall~B>0\,,$$
implying \eqref{eq-mu<0}.

Therefore, we
restrict ourselves to applied magnetic fields satisfying,
$$c\kappa\leq H\leq C\kappa\,.$$

Using Lemma~\ref{lem-BoFo} and a Cauchy-Schwarz inequality,
we find a positive constant $C>0$ such
that
\begin{equation}\label{proof1}
\|\psi\|_{L^2(\widetilde\Omega)}^2\leq C\left(\int_{ \{{\rm
dist}(x,\partial\Omega)\leq\frac1\kappa\}} d  x\right)^{1/2}
\|\psi\|_{L^4(\widetilde \Omega)}^2    \leq \frac{C'}{\sqrt{\kappa}}
\|\psi\|_{L^4(\Omega)}^2\,,
\end{equation}
where we use Theorem~\ref{partition-L4} to get the last inequality.

Since $(\psi,\Ab)$ is a (weak) solution of \eqref{GL-eq}, we get by
setting $\phi = \psi$ in
\eqref{eq:15} together with the assumption on $\psi$ that
\begin{equation}\label{Delta>0}
0<\kappa^2\|\psi\|_{L^4(\Omega)}^4\leq
-\int_{\widetilde\Omega}\left(w_m(x)|(\nabla-i\kappa H\Ab)\psi|^2+
\kappa^2 V_a(x)|\psi|^2 \right) d  x =:\Delta\,,\end{equation} where
$w_m$ and $V_a$ are introduced in (\ref{w-V}). Notice that 
use \eqref{partition} implies that $\|\psi\|_{L^4(\Omega)}\not=0$.

Implementing \eqref{Delta>0} in \eqref{proof1}, we get
\begin{equation}\label{proof2}
\|\psi\|_{L^2(\widetilde\Omega)}^2\leq
C''\sqrt{\Delta}\,\kappa^{-3/2}\,.
\end{equation}
We estimate
\begin{eqnarray*}
&&|(\nabla-i\kappa H\mathbf \Ab)\psi|^2\\
&&\hskip0.5cm\geq(1-\sqrt{\Delta}\,\kappa^{-3/4})
|(\nabla-i\kappa H\mathbf
F)\psi|^2-\frac1{\sqrt{\Delta}\,\kappa^{-3/4}}(\kappa H)^2|(\Ab-\mathbf
F)\psi|^2\,.
\end{eqnarray*}
This yields the following estimate on $\Delta$:
\begin{eqnarray}\label{estimate-Delta}
&&\Delta\leq
  \left(
-\mu^{(1)}(\kappa,H)+\min\left(1,\frac1m\right)\sqrt{\Delta}\,\kappa^{-3/4}
\mu^N(\kappa
  H)\right)\|\psi\|_{L^2(\widetilde\Omega)}^2\\
&&\hskip0.8cm+\min\left(1,\frac1m\right)\frac{\kappa^{3/4}}{\sqrt{\Delta}}
(\kappa H)^2\int_{\widetilde\Omega}|(\Ab-\mathbf F)\psi|^2\, d  x\,,\nonumber
\end{eqnarray}
where $\mu^N(\kappa H)$ is the lowest eigenvalue of the magnetic
Neumann Laplacian\break $-(\nabla-i\kappa H\mathbf F)^2$ in
$\widetilde\Omega$.

We recall that $H=\mathcal O(\kappa)$.
Using the asymptotic behavior of $\mu^N(B)$ as $B \to\infty$ together
with the estimate (\ref{proof2}), we get:
\begin{equation}\label{proof3}
\sqrt{\Delta}\,\kappa^{-3/4}\mu^N(\kappa
  H)\|\psi\|_{L^2(\widetilde\Omega)}^2\leq C\kappa^{-1/4}\Delta\,.
\end{equation}
On the other hand, using a Cauchy-Schwarz inequality and the estimate
(\ref{Delta>0}), we get:
$$(\kappa H)^2\int_{\widetilde\Omega}|(\Ab-\mathbf F)\psi|^2\, d  x\leq
(\kappa H)^2\|\Ab-\mathbf F\|_{L^4(\widetilde\Omega)}^2
\frac{\sqrt{2\Delta}}{\kappa}\,.$$
By Sobolev embedding and Lemma~\ref{lem-poincare},
$$(\kappa H)^2
\|\Ab-\mathbf F\|_{L^4(\widetilde\Omega)}^2\leq C(\kappa H)^2 \|{\rm
curl}\,\Ab-1\|_{L^2(\widetilde\Omega)}^2\leq
C\kappa^2\|\psi\|_{L^4(\widetilde\Omega)}^4\,.$$ Using again
Theorem~\ref{partition-L4}, we deduce that,
$$(\kappa H)^2
\|\Ab-\mathbf F\|_{L^4(\widetilde\Omega)}^2\leq
C\kappa^2\|\psi\|_{L^4(\Omega)}^4\leq C\Delta\,.$$
 Therefore, implementing all
the above estimates in (\ref{estimate-Delta}), we get
$$\Delta\leq
-\mu^{(1)}(\kappa,H)\|\psi\|_{L^2(\widetilde\Omega)}^2+C\frac{\Delta}
{\kappa^{1/4}}\,.$$
Knowing that $\Delta>0$ (see (\ref{Delta>0})), we deduce for $\kappa$
sufficiently large
the desired inequality, $\mu^{(1)}(\kappa,H)<0$.
\end{proof}

\begin{proof}[Proof of Theorem~\ref{thm-main}]
We can now finish the proof of Theorem~\ref{thm-main} as follows.
Theorem~\ref{loc=glo} gives $\mathcal N^{\rm
sc}(a,m;\kappa)=\mathcal N^{\rm loc}(a,m;\kappa)$ for $\kappa$
sufficiently large. On the other hand, we have the trivial
inclusions $\mathcal N(a,m;\kappa)\subset\mathcal N^{\rm
  sc}(a,m;\kappa)$ and $\mathcal N^{\rm
  loc}(a,m;\kappa)\subset\mathcal N(a,m;\kappa)$.
\end{proof}

\section{Monotonicity of the first eigenvalue}\label{Sec:Monot}
\subsection{The constant $\alpha_0(a,m)$}
We recall in this section the definition of the constant
$\alpha_0(a,m)$ introduced in \cite{Kach}, together with the main
properties of a family of ordinary differential operators.

Consider the space  $B^k(\mathbb R)=H^k(\mathbb R)\cap L^2(\mathbb
R;\,|t|^k d  t)$, $k\in\mathbb N$. Given $a,m,\alpha>0$ and $\xi\in\mathbb R$, let us
define the quadratic form~:
\begin{equation}\label{II-qf-xi}
B^1(\mathbb R)\ni u\mapsto q[a,m,\alpha;\xi](u),
\end{equation}
where~:
\begin{eqnarray}\label{IIfq}
&&q[a,m,\alpha;\xi](u)=\int_{\mathbb
R_+}\left(|u'(t)|^2+|(t-\xi)u(t)|^2-\alpha|u(t)|^2\right)dt\\
&&\hskip3cm+\int_{\mathbb
R_-}\left(\frac1m\left[|u'(t)|^2+|(t-\xi)u(t)|^2\right]+a\alpha|u(t)|^2\right)dt.\nonumber
\end{eqnarray}
We denote by $H[a,m,\alpha;\xi]$ the self-adjoint operator
associated to the closed symmetric  quadratic form (\ref{II-qf-xi}).
The domain of $H[a,m,\alpha;\xi]$ is defined by~:
\begin{equation}\label{IIdOp}
D(H[a,m,\alpha;\xi])=\left\{u\in B^1(\mathbb R);\quad u_{|_{\mathbb
R_{\pm}}}\in B^2(\mathbb R_{\pm}),\quad u'(0_+)=\frac1m
u'(0_-)\right\},
\end{equation}
and for $u\in D(H[a,m,\alpha;\xi])$, we have,
\begin{equation}\label{II-H-}
\left(H[a,m,\alpha;\xi]u\right)(t)=\left\{\begin{array}{l}
\left[\left(-\partial_t^2+(t-\xi)^2-\alpha\right)u\right](t);\quad\text{if } t>0,\\
\\
\left[\left(\frac1m\left\{-\partial_t^2+(t-\xi)^2\right\}+a\alpha\right)u\right](t);\quad
\text{if }t<0.\end{array}\right.
\end{equation}
We denote by $\mu_1(a,m,\alpha;\xi)$ the first eigenvalue of
$H[a,m,\alpha;\xi]$ which is given by the min-max principle,
\begin{equation}\label{IIpVP}
\mu_1(a,m,\alpha;\xi)=\inf_{u\in B^1(\mathbb
R),u\not=0}\frac{q[a,m,\alpha;\xi](u)}{\|u\|^2_{L^2(\mathbb R)}}.
\end{equation}

We summarize in the next theorem the main results obtained in
\cite{Kach} concerning the above family of operators.

\begin{thm}\label{kach-AsymptAnal}
The following assertions hold.
\begin{enumerate}
\item Given $a>0$ and $m>0$, there exists a unique $\alpha_0(a,m)>0$
such that
$$\inf_{\xi\in\mathbb R}\mu_1(a,m,\alpha_0(a,m);\xi)=0\,.$$
Moreover, $\alpha_0(a,m)=1$ if $m\leq1$, and there exists a
universal constant $\Theta_0\in]0,1[$ such that
$\Theta_0<\alpha_0(a,m)<1$ if $m>1$, and
$$\displaystyle\lim_{m\to\infty}\alpha_0(a,m)=\Theta_0\,.$$
\item If $0<\alpha<\alpha_0(a,m)$, then $\displaystyle\inf_{\xi\in\mathbb
  R}
\mu_1(a,m,\alpha;\xi)>0$.
\item Given $a>0$, there exists $m_0>1$ such that for all
  $m\geq m_0$  the
function $\mathbb R\ni \xi\mapsto\mu_1(a,m,\alpha_0(a,m);\xi)$
admits a unique non-degenerate minimum.
\item For all $a>0$ and $m\geq m_0$,
there exists $\epsilon_0(m)>0$ such that
if
$$\alpha\in[\alpha_0(a,m)-\epsilon_0(m),\alpha_0(a,m)+\epsilon_0(m)]\,,$$ then the function $\mathbb
R\ni\xi\mapsto\mu_1(a,m,\alpha;\xi)$ admits a unique non-degenerate
minimum, denoted by $\xi(a,m,\alpha)$.
\item For all $m>0$ and $a>0$, $\displaystyle\inf_{\xi\in\mathbb
R}\mu_1(a,m,\alpha;\xi)\to0$ as $\alpha\to\alpha_0(a,m)$.

\end{enumerate}
\end{thm}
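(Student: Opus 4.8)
The plan is to reduce the five assertions to soft spectral properties of the family $H[a,m,\alpha;\xi]$, together with comparisons to two explicit model operators: the displaced harmonic oscillator $-\partial_t^2+(t-\xi)^2$ on $\mathbb R$ (ground state energy $1$), and the de Gennes operator $\mathfrak h^{\mathrm N}(\xi)=-\partial_t^2+(t-\xi)^2$ on $\mathbb R_+$ with Neumann condition at $0$, whose first eigenvalue I denote $\theta(\xi)$. I will use the classical facts (see \cite{FoHe07}): $\theta$ is smooth, $\theta(\xi)\to1$ as $\xi\to\pm\infty$, $\theta(\xi)\ge1$ for $\xi\le0$, and $\inf_\xi\theta(\xi)=\Theta_0\in\,]0,1[$ is attained at a unique non-degenerate minimum $\xi_0>0$; also the Dirichlet realisation of $-\partial_t^2+(t-\xi)^2$ on $\mathbb R_+$ has first eigenvalue $\ge1$ (by extension by zero to $\mathbb R$). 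All the quantitative work is carried out in \cite{Kach}; here I only sketch the architecture. As a preliminary, $q[a,m,\alpha;\xi]$ is closed and bounded below on $B^1(\mathbb R)$ (the term $-\alpha\|u\|_{L^2(\mathbb R_+)}^2$ is infinitesimally form-bounded relative to the confining part), the potential is confining so the operator has compact resolvent, and $\mu_1(a,m,\alpha;\xi)$ is a simple eigenvalue, given by \eqref{IIpVP}, with positive ground state $u_{\alpha,\xi}$; moreover $(\alpha,\xi)\mapsto\mu_1$ is jointly continuous and real-analytic in $\alpha$. Localising the harmonic oscillator far on one side of $0$ gives, uniformly for $\alpha$ in compact sets, $\lim_{\xi\to+\infty}\mu_1=1-\alpha$ and $\lim_{\xi\to-\infty}\mu_1=\frac1m+a\alpha$; hence $\underline\mu(a,m,\alpha):=\inf_\xi\mu_1(a,m,\alpha;\xi)$ is continuous, and wherever it lies strictly below $\min(1-\alpha,\frac1m+a\alpha)$ the infimum is attained at a finite $\xi$.

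The engine is a pair of monotonicities. The $m$-dependence of $q$ enters only through $\frac1m\int_{\mathbb R_-}(|u'|^2+(t-\xi)^2|u|^2)$, so by Feynman--Hellmann $\partial_m\mu_1(a,m,\alpha;\xi)=-\frac1{m^2}\int_{\mathbb R_-}(|u_{\alpha,\xi}'|^2+(t-\xi)^2|u_{\alpha,\xi}|^2)<0$ (strict since $u_{\alpha,\xi}>0$ on $\mathbb R_-$), so $\underline\mu$ is strictly decreasing in $m$, and therefore $\alpha\mapsto\alpha_0(a,m)$ will be non-increasing in $m$. In the variable $\alpha$, Feynman--Hellmann gives $\partial_\alpha\mu_1(a,m,\alpha;\xi)=a\|u_{\alpha,\xi}\|_{L^2(\mathbb R_-)}^2-\|u_{\alpha,\xi}\|_{L^2(\mathbb R_+)}^2$, and the one-sided statement is: \emph{if $\mu_1(a,m,\alpha;\xi)\le0$ then $\partial_\alpha\mu_1(a,m,\alpha;\xi)<0$.} Indeed $q[a,m,\alpha;\xi](u_{\alpha,\xi})\le0$, together with $\alpha>0$ and the strict positivity of the confining part of $q$, forces $\|u_{\alpha,\xi}\|_{L^2(\mathbb R_+)}^2>a\|u_{\alpha,\xi}\|_{L^2(\mathbb R_-)}^2$. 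A short connectedness argument then shows that, for each fixed $\xi$, once $\alpha\mapsto\mu_1(a,m,\alpha;\xi)$ attains a non-positive value it stays strictly negative for all larger $\alpha$ (the first $\alpha$ at which it could return to $0$ would violate $\partial_\alpha\mu_1<0$ there).

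Assertions (1), (2), (5) now follow. Since $\underline\mu(a,m,\alpha)\le1-\alpha<0$ for $\alpha>1$, while $\underline\mu(a,m,0)\ge1/\max(1,m)>0$, the number $\alpha_0(a,m):=\inf\{\alpha>0:\underline\mu(a,m,\alpha)\le0\}$ lies in $]0,1]$; continuity of $\underline\mu$ squeezes $\underline\mu(a,m,\alpha_0(a,m))=0$ with $\underline\mu>0$ below $\alpha_0(a,m)$, and the one-sided monotonicity plus the connectedness argument give $\underline\mu<0$ above it (when $\alpha_0(a,m)<1$ the infimum at $\alpha_0(a,m)$ is attained, so $\partial_\alpha\mu_1<0$ there; the case $\alpha_0(a,m)=1$ is automatic from $\underline\mu\le1-\alpha$), so $\alpha_0(a,m)$ is the unique zero of $\underline\mu(a,m,\cdot)$. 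For $m\le1$, $\frac1m\ge1$ makes $q[a,m,\alpha;\xi](u)\ge\int_{\mathbb R}(|u'|^2+(t-\xi)^2|u|^2)-\alpha\|u\|^2\ge(1-\alpha)\|u\|^2$, so $\alpha_0\ge1$, hence $\alpha_0(a,m)=1$; with monotonicity in $m$ this gives $\alpha_0(a,m)\le1$ always. For $m>1$, the strict inequality $\alpha_0(a,m)<1$ comes from a tunnelling analysis of $\mu_1(a,m,1;\xi)$ as $\xi\to+\infty$ (done in \cite{Kach}): the $O(1/m)$ softening of the coefficient on $\mathbb R_-$ drives $\mu_1(a,m,1;\xi)$ below $0$ for $\xi$ large. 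The bound $\alpha_0(a,m)>\Theta_0$ follows from $q[a,m,\Theta_0;\xi](u)\ge(\theta(\xi)-\Theta_0)\|u\|_{L^2(\mathbb R_+)}^2+(\text{strictly positive }\mathbb R_-\text{ terms})$, the Dirichlet bound $\ge1>\Theta_0$ covering the degenerate case $u\equiv0$ on $\mathbb R_-$, whence $\underline\mu(a,m,\Theta_0)>0$. Finally $\lim_{m\to\infty}\alpha_0(a,m)=\Theta_0$: the limit exists and is $\ge\Theta_0$ by the above, and for any $\alpha$ strictly between $\Theta_0$ and that limit, testing $q[a,m,\alpha;\xi_0]$ against the de Gennes eigenfunction $\phi_{\xi_0}$ at $\xi_0$ (for which $\theta(\xi_0)=\Theta_0$), extended into $\mathbb R_-$ by a bump of width $m^{-1/2}$, gives the value $(\Theta_0-\alpha)\|\phi_{\xi_0}\|_{L^2(\mathbb R_+)}^2+o(1)<0$ for $m$ large, contradicting $\underline\mu(a,m,\alpha)>0$.

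Assertions (3) and (4) — the main obstacle — concern the singular limit $m\to\infty$, in which $w_m=1/m$ degenerates on $\mathbb R_-$. The point is that $\mathbb R_-$ then effectively drops out: the only surviving term there, $a\alpha\int_{\mathbb R_-}|u|^2$, is minimised by $u\equiv0$ on $\mathbb R_-$, and one must prove that $H[a,m,\alpha_0(a,m);\xi]$ converges — in a resolvent sense, locally uniformly in $\xi$, together with the first eigenvalue branch and its first two $\xi$-derivatives — to $\mathfrak h^{\mathrm N}(\xi)-\Theta_0$, whose first eigenvalue $\theta(\xi)-\Theta_0$ has a unique non-degenerate minimum at $\xi_0$. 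Because for $|\xi|$ large $\mu_1(a,m,\alpha_0(a,m);\xi)$ stays near its positive limiting values $1-\alpha_0(a,m)$ and $\frac1m+a\alpha_0(a,m)$, its minimiser lies in a fixed compact $\xi$-interval, on which $C^2$-closeness to $\theta(\cdot)-\Theta_0$ forces, for $m$ large, a unique non-degenerate minimum; this is (3). Assertion (4) then follows by the implicit function theorem applied to $\partial_\xi\mu_1(a,m,\alpha;\xi)=0$: joint smoothness of $(\alpha,\xi)\mapsto\mu_1$ and the non-degeneracy at $\alpha=\alpha_0(a,m)$ produce a unique critical point $\xi(a,m,\alpha)$ for $\alpha\in[\alpha_0(a,m)-\epsilon_0(m),\alpha_0(a,m)+\epsilon_0(m)]$, still a non-degenerate minimum by continuity of $\partial_\xi^2\mu_1$ and with no competing minima at large $|\xi|$. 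The delicate part throughout is precisely this $m\to\infty$ analysis — making rigorous the degenerate limit and, above all, the $C^2$ convergence of the eigenvalue branch that underlies the non-degeneracy — which is the content of the relevant sections of \cite{Kach}, to which I refer for the details.
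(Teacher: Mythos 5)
The paper contains no proof of this theorem: it is stated explicitly as a summary of results from \cite{Kach} (``We summarize in the next theorem the main results obtained in \cite{Kach}\dots''), and the paper itself emphasizes that the existence of $m_0>1$ in assertions (3)--(4) is ``nontrivial'' and rests on the fine asymptotic analysis of the eigenvalue as $m\to\infty$ in \cite[Section~3.4]{Kach}. So there is no internal proof to measure you against; the relevant comparison is with your reliance on \cite{Kach}, which in the end is the same as the paper's. Within that framing, the soft parts of your sketch are sound and go beyond what the paper records: the Feynman--Hellmann identity $\partial_\alpha\mu_1=a\|u\|_{L^2(\mathbb R_-)}^2-\|u\|_{L^2(\mathbb R_+)}^2$ together with the observation that $\mu_1\le0$ forces $\|u\|_{L^2(\mathbb R_+)}^2>a\|u\|_{L^2(\mathbb R_-)}^2$ is exactly the right mechanism for uniqueness of $\alpha_0$ and for assertion (2) (it is the positivity of $b_1$ in the paper's notation); the $\xi\to\pm\infty$ limits $1-\alpha$ and $\tfrac1m+a\alpha$, the bound $\alpha_0\geq1$ hence $=1$ for $m\le1$, the de Gennes comparison giving $\alpha_0>\Theta_0$ (your compressed ``whence $\underline\mu(a,m,\Theta_0)>0$'' does need the attainment/limit dichotomy in $\xi$ you set up earlier, and the same bound for all $\alpha\le\Theta_0$, but both are immediate), the bump test function of width $m^{-1/2}$ for $\lim_{m\to\infty}\alpha_0=\Theta_0$, and continuity of $\underline\mu$ for assertion (5) are all correct in outline.

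What you have not proved --- and openly defer --- are precisely the two genuinely hard ingredients: the strict inequality $\alpha_0(a,m)<1$ for $m>1$ (the exponentially small interface/tunnelling effect) and, above all, assertions (3)--(4), for which one needs $C^2$-in-$\xi$ control of the eigenvalue branch uniformly as $m\to\infty$ so that non-degeneracy of the de Gennes minimum survives the degenerate limit, plus the confinement of minimizers to a compact $\xi$-interval and the implicit-function/gap argument you indicate for $\alpha$ near $\alpha_0$. Since the paper also imports exactly these points from \cite{Kach}, your proposal is consistent with the paper's treatment; just be clear that, as written, it is a proof architecture for (3)--(4) rather than an independent proof, the substantive content remaining a citation.
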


Let us mention that the universal constant $\Theta_0$ is the infimum
of the spectrum of the Neumann Schr\"odinger operator with unit
magnetic field in $\mathbb R\times\mathbb R_+$, and it holds that
$\frac12<\Theta_0<1$. We point also that the existence of the
constant $m_0>1$ in Theorem~\ref{kach-AsymptAnal} is nontrivial, and
is due to a fine asymptotic analysis of the eigenvalue (\ref{IIpVP})
as $m\to\infty$,
see \cite[Section~3.4]{Kach}.\\
For further use, we introduce the constant
\begin{equation}\label{beta}
\beta(a,m,\alpha)=\inf_{\xi\in\mathbb R}\mu_1(a,m,\alpha;\xi)\,,
\end{equation}
and the set
\begin{equation}\label{M}
\mathrm M(a,m,\alpha)=\{\xi\in\mathbb
R~:~\mu_1(a,m,\alpha;\xi)=\beta(a,m,\alpha)\}\,.
\end{equation}
We notice as a result of Theorem~\ref{kach-AsymptAnal} that
$\beta(a,m,\alpha)=0$ if and only if $\alpha=\alpha_0(a,m)$, and in
this
case $\mathrm M(a,m,\alpha)=\{\xi(a,m,\alpha)\}$ when $m\geq m_0$.\\
Let us also notice that it results from a simple application of the min-max
principle together with well known results concerning the harmonic
oscillator in $\mathbb R_+$,
\begin{equation}\label{beta*}
\beta(a,m,\alpha)+\alpha\geq\min\left(\Theta_0,\frac{\Theta_0}m
+(a+1)\alpha\right)\,,
\quad\forall~\alpha>0\,.
\end{equation}
Furthermore, let $f_{\alpha,\xi}^{a,m}$ be the normalized
eigenfunction associated with the eigenvalue (\ref{IIpVP}). We
introduce the constants
\begin{eqnarray}\label{II-C1}
C_1(a,m,\alpha;\xi)&=&\int_{\mathbb
R_+}(t-\xi)^3|f_{\alpha,\xi}^{a,m}(t)|^2dt+\frac1m\int_{\mathbb
R_-}(t-\xi)^3|f_{\alpha,\xi}^{a,m}(t)|^2dt\\
&&-\frac12\left(1-\frac1m\right)|f_{\alpha,\xi}^{a,m}(0)|^2\,,\\
b_1(a,m,\alpha;\xi)&=&\int_{\mathbb
R_+}|f_{\alpha,\xi}^{a,m}(t)|^2dt-a\int_{\mathbb
R_-}|f_{\alpha,\xi}^{a,m}(t)|^2dt\,.\label{II-b1}
\end{eqnarray}
If $m\geq m_0$, $\alpha$ fills the hypotheses of assertion (4) in
Theorem~\ref{kach-AsymptAnal} and $\xi=\xi(a,m,\alpha)$, then we
write simply:
\begin{eqnarray}\label{C1-b1}
&&C_1(a,m,\alpha)=C_1\big{(}a,m,\alpha;\xi(a,m,\alpha)\big{)}\,.\\
&&b_1(a,m,\alpha)=b_1\big{(}a,m,\alpha;\xi(a,m,\alpha)\big{)}\,,
\label{C1-b1*}
\end{eqnarray}
and if further, $\alpha=\alpha_0=\alpha_0(a,m)$, we introduce the
constant (appearing in the asymptotic formula \eqref{HC3-curv})
\begin{equation}\label{C1}
\mathcal C_1(a,m)=-\frac{C_1(a,m,\alpha_0)}{b_1(a,m,\alpha_0)}\,.
\end{equation}
Finally, we point out that 
it is proved in \cite[Section~3]{Kach},
$$\lim_{m\to\infty}C_1(a,m,\alpha_0)=-(1+6a\Theta_0^2)C_1^*\,,
\quad \lim_{m\to\infty}b_1(a,m,\alpha_0)=1\,,
$$
where $C_1^*>0$ is a universal constant (see \cite[Remark~1.4]{Kach}). Thus,
for large values of $m$, the constant in (\ref{C1-b1}) is negative.

Now, for $m>1$ and $\alpha>0$, we call $(\mathcal P_{m,\alpha})$ the
property below,
$$(\mathcal P_{m,\alpha})\quad\left\{
\begin{array}{l}
\mathrm M(a,m,\alpha)=\{\xi(a,m,\alpha)\}\textrm{ and }\xi(a,m,\alpha)>0\,,\\
\xi(a,m,\alpha) \textrm{ is a non-degenerate minimum point,}\\
C_1(a,m,\alpha)<0\quad{\rm and}\quad b_1(a,m,\alpha)>0\,.
\end{array}\right.$$
We introduce the constant $m_*\geq1$,
\begin{equation}\label{m*}
m_*=\inf\{m>1~:~\forall~m'\geq m\,,
~(\mathcal P_{m',\alpha}) \textrm{
  holds for }\alpha=\alpha(a,m')\}\,.
\end{equation}
In view of the result of Theorem~\ref{kach-AsymptAnal}, we may define
$m_*$ as above. We emphasize also that the constant $m_*$ depends on
$a$, but we omit that from the notation for the sake of simplicity. It
is conjectured in \cite{Kach} that $m_*=1$ for all $a>0$.\\
Now, given $a>0$ and $m>m_*$, we introduce
\begin{equation}\label{epsilon*}
\epsilon_*(m)=\sup\{\epsilon\in]0,{d_0}/2[~:~\forall~
\alpha\in[\alpha_0(a,m)-\epsilon,
\alpha_0(a,m)+\epsilon]\,,~(\mathcal P_{m,\alpha})\textrm{
holds}\}\,,
\end{equation}
where $d_0:=\Theta_0-\frac12>0$.\\
For the special case of a disc domain, one more constant will appear
to be relevant (this is $C_2(a,m,\alpha)$ introduced below). Given
$a>0$ and $m>m_*$, the lowest eigenvalue of the operator (\ref{II-H-})
for $\xi=\xi(a,m,\alpha)$ is $\beta(a,m,\alpha)$ introduced in
(\ref{beta}). Hence, the regularized resolvent, i.e. the operator
($\xi=\xi(a,m,\alpha)$),
\begin{equation}\label{reg-res}
R_0[a,m,\alpha](\phi)
=\left\{\begin{array}{l}
\left[H[a,m,\alpha;\xi)-\beta(a,m,\alpha)\right]^{-1}\phi\,,~{\rm
  if}~\phi\bot f_{\alpha,\xi}^{a,m}\,,\\
0\,,~{\rm otherwise}\,,\end{array}\right.
\end{equation}
is bounded in $L^2(\mathbb R)$. Letting $f=f_{\alpha,\xi}^{a,m}$ for
$\xi=\xi(a,m,\alpha)$, $\tilde w_m(t)=1$ if $t>0$ and $\tilde
w_m(t)=\frac1m$ if $t<0$, then  it is proved in \cite[Proposition~3.6]{Kach}
that the functions $f$ and $\tilde w_m f$ are orthogonal in
$L^2(\mathbb R)$, hence the integral
$$I_2(a,m,\alpha)=\int_0^\infty
(t-\xi)f\,
R_0(t-\xi)f\, d  t+\frac1{m^2}\int_{-\infty}^0
(t-\xi)f\,
R_0(t-\xi)f\, d  t$$
is positive. Notice that we write simply
$R_0$ for
the operator (\ref{reg-res}). Now, we introduce the constant,
\begin{equation}\label{C2}
C_2(a,m,\alpha)=\int_0^\infty|f(t)|^2\, d  t+
\frac1m
\int_{-\infty}^0|f(t)|^2\, d  t-4I_2(a,m,\alpha).
\end{equation}
Since $m>1$ and $\int_{\mathbb R}|f|^2\, d  t=1$, it is clear that
$C_2(a,m,\alpha)<1$. Recalling that
$$\alpha_0(a,m)>\Theta_0>\frac12\,,$$ we get for
$\alpha\in[\alpha_0(a,m)-\epsilon_*(m),\alpha_0(a,m)+\epsilon_*(m)]$
that
\begin{equation}\label{C2<2alpha}
\alpha_0(a,m)-\frac12C_2(a,m,\alpha)>\frac{d_0}2\,,\quad
d_0:=\Theta_0-\frac12\,.
\end{equation}

\subsection{Notation, hypotheses and announcement of Main Result}
We assume that $\Omega\subset\mathbb R^2$,
$\widetilde\Omega\subset\mathbb R^2$ are
open,
bounded and have  smooth boundaries such that
\begin{equation}\label{hyp-Omega}
\overline\Omega\subset\widetilde\Omega\,.
\end{equation}
Given $B>0$ and $\alpha>0$,
we denote by $P[B,\alpha]$ the
self-adjoint operator in $L^2(\widetilde\Omega)$  generated by the
quadratic form
\begin{equation}\label{qf-B}
H^1_{B\mathbf F}(\widetilde\Omega)\ni\phi\mapsto
Q[B,\alpha](\phi)=\int_{\widetilde\Omega}\left(
w_m(x)|(\nabla-iB\mathbf F)\phi|^2+\alpha BV_a(x)|\phi|^2\right)\, d  x\,,
\end{equation}
where $w_m$ and $V_a$ are introduced in (\ref{w-V}), and
\begin{equation}\label{H1-mag}
H^1_{B\mathbf F}(\widetilde\Omega)=\{u\in L^2(\widetilde\Omega)~:~
(\nabla-iB\mathbf F)u\in L^2(\mathbb R^2)\}\,.\end{equation}
Notice that since $\widetilde\Omega$ is bounded, $H^1_{B\mathbf
  F}(\widetilde\Omega)=H^1(\widetilde\Omega)$ and hence the form
domain of $Q$ is independent of $B$.

We introduce further
the lowest eigenvalue of the operator $P[B,\alpha]$:
\begin{equation}\label{mu1(B)}
\mu_1(B,\alpha)=\inf_{\substack{\phi\in H^1(\widetilde
 \Omega)\\
\phi\not=0}}\frac{Q[B,\alpha](\phi)}{\|\phi\|_{L^2(\widetilde\Omega)}^2}\,.
\end{equation}
By taking  $B=\kappa H$ and $\alpha=\kappa/H$, we see the connection with
the critical fields introduced in (\ref{HC3-loc}):
\begin{equation}\label{relation}
\mu^{(1)}(\kappa,H)=\mu_1(B,\alpha).\end{equation}
Furthermore, we put,
\begin{equation}\label{lambda-2}
\lambda_1(B,\alpha)=\mu_1(B,\alpha)+\alpha B\,.
\end{equation}

As an application of standard analytic perturbation theory, we have
the following proposition.

\begin{prop}\label{1O-PT}
Given $a>0$, $m>0$ and $\kappa>0$, the following one sided derivatives
\begin{eqnarray*}
&&\partial_H\,\mu^{(1)}_{\pm}(\kappa,H)=\lim_{\epsilon\to0_\pm}\frac{
\mu^{(1)}(\kappa,H+\epsilon)-\mu^{(1)}(\kappa,H)}\epsilon\,,\\
&&\partial_{B}\,\lambda_{1,\pm}(B,\alpha)=
\lim_{\epsilon\to0_\pm}\frac{
\lambda_1(B+\epsilon,\alpha)-\lambda_1(B,\alpha)}\epsilon\,,
\end{eqnarray*}
exist for all $H>0$, $B>0$ and $\alpha>0$\,, and
$$\partial_B\,\lambda_{1,+}(B,\alpha)\leq
\partial_B\,\lambda_{1,-}(B,\alpha)\,.$$
 Moreover, there
exist $L^2$-normalized ground states $\varphi_{H_+}$ and
$\varphi_{H_-}$ associated with $\mu^{(1)}(\kappa,H)$ such that,
\begin{equation}\label{cont-fun}
\partial_H\,\mu^{(1)}_{\pm}(\kappa,H)=\kappa\left(
\partial_{B}\,\lambda_{1,\pm}(B,\alpha)\bigg{|}
_{\substack{B=\kappa
    H\\
\alpha=\kappa/H}}
-\frac{\kappa}{H}\int_{\widetilde \Omega} (V_a(x)+1)|\varphi_{H_\pm}(x)|^2\, d  x
\right)\,.
\end{equation}
Here $V_a$ is introduced in (\ref{w-V}).
\end{prop}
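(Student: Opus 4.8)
The plan is to invoke analytic perturbation theory for the family $B \mapsto P[B,\alpha]$ (at fixed $\alpha$) and separately for $H \mapsto $ (the operator with quadratic form $\mathcal Q[\kappa,H]$), and to connect the two by the scaling relation \eqref{relation}. First I would note that for fixed $\alpha$, the operators $P[B,\alpha]$ form a type (A) analytic family in $B$ in the sense of Kato: the form domain $H^1(\widetilde\Omega)$ is independent of $B$ (as remarked after \eqref{H1-mag}), and the form $Q[B,\alpha]$ depends real-analytically (in fact polynomially, of degree $2$) on $B$ since $Q[B,\alpha](\phi) = \int w_m|\nabla\phi|^2 - 2B\,\mathrm{Re}\int w_m\, i\mathbf F\phi\cdot\overline{\nabla\phi} + B^2\int w_m|\mathbf F|^2|\phi|^2 + \alpha B\int V_a|\phi|^2$. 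The lowest eigenvalue $\mu_1(B,\alpha)$ is simple (being the ground state of a Schr\"odinger-type operator with real form, it is obtained by a Perron--Frobenius/ground-state argument, or one can simply work with the fact that the infimum of a finite set of analytic branches has one-sided derivatives). By Kato--Rellich theory, away from crossings $\mu_1$ is analytic; at a possible crossing, $\mu_1(\cdot,\alpha)$ is still the minimum of finitely many analytic functions on any compact interval, hence Lipschitz with one-sided derivatives $\partial_{B}\,\lambda_{1,\pm}$ existing everywhere, and $\partial_B\,\lambda_{1,+} \le \partial_B\,\lambda_{1,-}$ because the minimum of smooth functions is semiconcave (its right derivative never exceeds its left derivative). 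The same applies to $\lambda_1(B,\alpha) = \mu_1(B,\alpha) + \alpha B$, which differs from $\mu_1$ by a term smooth in $B$, so the one-sided derivatives of $\lambda_1$ exist and satisfy the stated inequality.

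Next I would compute the Feynman--Hellmann-type formula. Pick a normalized ground state $\varphi_{B_\pm}$ realizing the right/left derivative (i.e.\ an eigenfunction along the analytic branch that is active just to the right, resp.\ left, of $B$); then
\begin{equation*}
\partial_{B}\,\lambda_{1,\pm}(B,\alpha) = \partial_B Q[B,\alpha](\varphi_{B_\pm}) + \alpha = -2\,\mathrm{Re}\!\int_{\widetilde\Omega} w_m(x)\, i\mathbf F\varphi_{B_\pm}\cdot\overline{\nabla\varphi_{B_\pm}}\,dx + 2B\!\int_{\widetilde\Omega} w_m(x)|\mathbf F|^2|\varphi_{B_\pm}|^2\,dx + \alpha B'
\end{equation*}
— here I am being schematic; the clean statement is $\partial_B\lambda_{1,\pm}(B,\alpha) = \langle \varphi_{B_\pm}, (\partial_B P[B,\alpha])\varphi_{B_\pm}\rangle + \alpha$ with $\partial_B P[B,\alpha] = -2w_m\,i\mathbf F\cdot(\nabla - iB\mathbf F) + $ (lower-order), since the $\alpha B$ potential contributes exactly $\alpha$. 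Now I would turn to $\mu^{(1)}(\kappa,H)$. Writing $B = \kappa H$ and $\alpha = \kappa/H$, the chain rule gives $\partial_H\,\mu^{(1)}_\pm(\kappa,H) = \kappa\,\partial_B\mu_{1,\pm}(B,\alpha)\big|_{B=\kappa H} + \partial_\alpha\mu_1(B,\alpha)\cdot(-\kappa/H^2)$. The partial $\partial_\alpha\mu_1$ is harmless: $\alpha$ enters the form only through the bounded potential $\alpha B V_a$, so $\partial_\alpha\mu_1(B,\alpha) = B\int V_a|\varphi|^2$ with no one-sidedness issue (or one keeps it one-sided consistently). Then I reorganize: using $\mu_1 = \lambda_1 - \alpha B$ so that $\partial_B\mu_{1,\pm} = \partial_B\lambda_{1,\pm} - \alpha$, and $\partial_\alpha\mu_1 = B\int V_a|\varphi_{H_\pm}|^2$ with $B = \kappa H$, one collects the $\int V_a$ and the constant terms into $-\frac{\kappa}{H}\int (V_a + 1)|\varphi_{H_\pm}|^2$, using $\int|\varphi_{H_\pm}|^2 = 1$ to absorb the "$+1$". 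Matching constants carefully yields exactly \eqref{cont-fun}.

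The main obstacle is bookkeeping at eigenvalue crossings and making sure the \emph{same} ground state appears in all one-sided derivatives on both sides of the identity. Concretely: one must argue that one can choose a single pair $\varphi_{H_+}, \varphi_{H_-}$ of normalized ground states of $\mu^{(1)}(\kappa,H)$ (equivalently of $\mu_1(\kappa H,\kappa/H)$) that simultaneously computes $\partial_H\mu^{(1)}_\pm$, $\partial_B\lambda_{1,\pm}$ and $\partial_\alpha\mu_1$. This works because the relevant analytic branch of eigenvalues — the one that is the minimum immediately to the right (resp.\ left) of the point in question — has a well-defined analytic eigenprojection near that point along the curve $t\mapsto(B(t),\alpha(t))$ or along the line $\alpha = $ const; its eigenfunction at the point is the desired $\varphi_{H_\pm}$, and Feynman--Hellmann along that branch gives all three derivatives with this common eigenfunction. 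I would also record the inequality $\partial_B\lambda_{1,+}\le\partial_B\lambda_{1,-}$ as an immediate consequence of semiconcavity of $B\mapsto\lambda_1(B,\alpha)$ (minimum of analytic functions), which is the only place the genuinely one-sided nature is used; everything else is a routine chain-rule manipulation once the eigenfunctions are fixed.
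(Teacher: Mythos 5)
Your proposal follows essentially the same route as the paper: Kato analytic perturbation theory for a family with $B$-independent form domain (note this is the type (B) condition, not type (A)), one-sided derivatives read off from the locally minimal analytic eigenvalue branches together with the semiconcavity inequality $\partial_B\lambda_{1,+}\leq\partial_B\lambda_{1,-}$, and a Feynman--Hellmann/chain-rule computation along $B=\kappa H$, $\alpha=\kappa/H$ yielding \eqref{cont-fun}, with the branch-matching at possible crossings treated at the same level of detail as in the paper. The only caveat is your parenthetical claim that $\mu_1(B,\alpha)$ is simple by a Perron--Frobenius argument: this is not available for magnetic Schr\"odinger operators and the paper only assumes finite multiplicity, but since you immediately fall back on the finitely-many-analytic-branches argument, which is exactly the paper's, nothing essential is lost.
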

\begin{proof}Let us explain briefly why the one sided derivatives
$\partial_B\lambda_{1,\pm}(B,\alpha)$ above exist; the one sided
derivatives in $H$
exist for exactly the same reason.

As we already mentioned, for real $B$, 
the operator $P[B,\alpha]$ is self adjoint,
has compact resolvent and its form domain, $H^1(\widetilde\Omega)$,
is independent from $B$. Then, one can show that there exists $z_0$
sufficiently small such that the operator
$$P[z,\alpha]=\nabla_{z\Fb}\cdot w_m\nabla_{z\Fb}+\alpha zV_a$$  is
analytic of type (B) in\footnote{Actually for $z_0$ small,
  $P[z,\alpha]$ is sectorial for all $z\in D_0$, its form domain is
  independent from $z$ and the expression of the quadratic form
  associated to 
$P[z,\alpha]$ is  analytic in $z$.}
  $D_0=\{z=x+iy~:~x>0,~|y|< z_0\}$, 
see \cite[p. 392]{Kato} for the definition of type (B) operators. In
particular, for a given $B>0$, the eigenvalue $\mu_1(B,\alpha)$ has
finite multiplicity $n$. Now, by analytic perturbation theory (see
\cite[Theorem~4.2, p. 395]{Kato}) applied to the family of operators
$D_0\ni z\mapsto P[z,\alpha],$ we get the existence of $\epsilon>0$ and
$2n$ analytic functions
$$(B-\epsilon,B+\epsilon)\ni z\mapsto\phi_j(z)\in
H^1(\widetilde\Omega)\setminus\{0\}, \quad(B-\epsilon,B+\epsilon)\ni
z\mapsto E_j(z)\in\mathbb R$$ such that
$$
P[z,\alpha]\left(\phi_j(z)\right)=E_j(z)\phi_j(z),\quad
E_j(B)=\mu_1(B,\alpha).$$ By choosing $\epsilon>0$ sufficiently
small, we get the existence of $j_\pm\in\{1,2,\cdots,n\}$ such that
\begin{align*}
&E_{j_+}(z)=\min_{j\in\{1,2,\cdots,n\}}E_j(z) \quad{\rm
for}~B<z<B+\epsilon,\\
&E_{j_-}(z)=\min_{j\in\{1,2,\cdots,n\}}E_j(z)\quad{\rm
for}~B-\epsilon<z<B\,.\end{align*} With this choice, it is clear
that $\partial_B\lambda_{1,\pm}(B,\alpha)=E_{j_\pm}'(B)+\alpha$.

Furthermore, we can choose the eigenfunctions $\phi_{j_\pm}(z)$ to be
$L^2$-normalized.
Now, the equality (\ref{cont-fun}) is obtained through differentiation of
the relation (in $z=0_\pm$)
$$\lambda_1(B+z,\alpha)=Q[B+z,\alpha](\phi_{j_\pm}(B+z))+\alpha (B+z)
\,,$$
and the application of the
chain rule.
\end{proof}

In this section, we shall work under the following hypothesis on the
constant $m$:
\begin{equation}\label{hypoth-m}
m> m_*\,,
\end{equation}
where $m_*>1$ is the constant introduced in
(\ref{m*}).\\
The next hypothesis is on the constant $\alpha$:
\begin{equation}\label{hypoth-alpha}
-\epsilon_*(m)\leq \alpha-\alpha_0(a,m)\leq
\epsilon_*(m)\,,\end{equation} where the constants
$\alpha_0(a,m)\in]\Theta_0,1[$ and $\epsilon_*(m)>0$ are introduced
in Theorem~\ref{kach-AsymptAnal} and (\ref{epsilon*}) respectively.

\begin{thm}\label{thm-monton}(General domains)\\
Under the  hypotheses \eqref{hypoth-m} and \eqref{hypoth-alpha}, if
$\Omega$ is not a disc,  the following holds
\begin{equation}\label{item1}
\lim_{B\to\infty}\left(\sup_{|\alpha-\alpha_0(a,m)|\leq\epsilon_*(m)}
\left|\partial_B\lambda_{1,\pm}(B,\alpha)-\beta(a,m,\alpha)-\alpha\right|\right)=0.
\end{equation}
\end{thm}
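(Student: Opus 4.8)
The plan is to combine the Feynman--Hellmann identity contained in the proof of Proposition~\ref{1O-PT} with a precise description of the ground state of $P[B,\alpha]$ for large $B$, and then to identify the resulting limit with $\beta(a,m,\alpha)$ by a one-dimensional virial computation. From the proof of Proposition~\ref{1O-PT} one has, for suitable $L^2$-normalised ground states $\varphi_{B,\pm}$ of $P[B,\alpha]$,
\[
\partial_B\lambda_{1,\pm}(B,\alpha)=\alpha+(\partial_BQ)[B,\alpha](\varphi_{B,\pm}),
\]
where $(\partial_BQ)[B,\alpha](\varphi)$ is the $B$-derivative of $Q[B,\alpha](\varphi)$ at fixed $\varphi$, namely
\[
(\partial_BQ)[B,\alpha](\varphi)=\int_{\widetilde\Omega}\Big(-2w_m\,\mathbf F\cdot\mathrm{Im}\big(\overline\varphi\,(\nabla-iB\mathbf F)\varphi\big)+\alpha V_a|\varphi|^2\Big)\,\md x .
\]
Thus it suffices to prove $(\partial_BQ)[B,\alpha](\varphi_{B,\pm})\to\beta(a,m,\alpha)$ as $B\to\infty$, uniformly for $|\alpha-\alpha_0(a,m)|\le\epsilon_*(m)$; the inequality $\partial_B\lambda_{1,+}\le\partial_B\lambda_{1,-}$ from Proposition~\ref{1O-PT} is then only a consistency check.

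First I would invoke the localisation of the ground state from \cite{Kach}. Since $\alpha$ remains in a small fixed neighbourhood of $\alpha_0(a,m)$ and $\beta(a,m,\alpha_0)=0$, while the bottoms of the model operators governing the interior of $\Omega$, the interior of $\widetilde\Omega\setminus\overline\Omega$ and the outer boundary $\partial\widetilde\Omega$ (equal to $B$ times $1-\alpha$, $\tfrac1m+a\alpha$, $\tfrac{\Theta_0}{m}+a\alpha$ respectively) stay bounded below by a positive constant, there is a spectral gap uniform in $\alpha$; the Agmon-type estimates of \cite{Kach} then give exponential decay on the scale $B^{-1/2}$ of $\varphi_{B,\pm}$ and of $(\nabla-iB\mathbf F)\varphi_{B,\pm}$ away from $\partial\Omega$, uniformly in $\alpha$. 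After a gauge change making $\mathbf F$ equal, near $\partial\Omega$, to the standard adapted gauge $\mathbf F_0$ (vanishing on $\partial\Omega$, with $\mathbf F_0\cdot\nu=0$ there, and equal to $-t$ times a unit tangent field up to $O(t^2)$ in boundary coordinates $(s,t)$, $t$ the signed distance to $\partial\Omega$ with $t>0$ inside $\Omega$), the contribution to $(\partial_BQ)[B,\alpha](\varphi_{B,\pm})$ from outside a thin tube around $\partial\Omega$ is exponentially small.

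Next I would pass to the model. Rescaling $t=\tau/\sqrt B$ and using tangential Fourier analysis, $B^{-1}P[B,\alpha]$ reduces, in the limit, to the family $H[a,m,\alpha;\xi]$, and by assertion~(4) of Theorem~\ref{kach-AsymptAnal} (applicable under \eqref{hypoth-m}--\eqref{hypoth-alpha}) the function $\xi\mapsto\mu_1(a,m,\alpha;\xi)$ has a unique non-degenerate minimum $\xi_0:=\xi(a,m,\alpha)$. The resulting tangential gap provides a quantitative approximation, uniform in $\alpha$ and in the form norm, of $\varphi_{B,\pm}$ by a slowly varying tangential factor (carrying the appropriate oscillation) tensored with $f^{a,m}_{\alpha,\xi_0}(\sqrt B\,t)$. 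Substituting this into $(\partial_BQ)[B,\alpha](\varphi_{B,\pm})$ (with $\mathbf F_0\approx-t\,(\text{unit tangent})$) the cross terms are $O(B^{-1/2})$ and the rescaled integral converges, uniformly in $\alpha$, to
\[
\mathcal L(a,m,\alpha):=\int_{\mathbb R}2\tau\,\tilde w_m(\tau)(\tau-\xi_0)\big|f^{a,m}_{\alpha,\xi_0}(\tau)\big|^2\,\md\tau-\alpha\!\int_{\mathbb R_+}\!\big|f^{a,m}_{\alpha,\xi_0}\big|^2\,\md\tau+a\alpha\!\int_{\mathbb R_-}\!\big|f^{a,m}_{\alpha,\xi_0}\big|^2\,\md\tau .
\]
Finally, writing $2\tau(\tau-\xi_0)=(\tau-\xi_0)^2+(\tau^2-\xi_0^2)$ and using $q[a,m,\alpha;\xi_0](f^{a,m}_{\alpha,\xi_0})=\mu_1(a,m,\alpha;\xi_0)=\beta(a,m,\alpha)$, one gets $\mathcal L=\beta+\int\tilde w_m(\tau^2-\xi_0^2)|f^{a,m}_{\alpha,\xi_0}|^2-\int\tilde w_m|(f^{a,m}_{\alpha,\xi_0})'|^2$. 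The stationarity $\partial_\xi\mu_1(a,m,\alpha;\xi)|_{\xi_0}=0$ gives $\int\tilde w_m(\tau-\xi_0)|f^{a,m}_{\alpha,\xi_0}|^2=0$, hence $\int\tilde w_m(\tau^2-\xi_0^2)|f^{a,m}_{\alpha,\xi_0}|^2=\int\tilde w_m(\tau-\xi_0)^2|f^{a,m}_{\alpha,\xi_0}|^2$; and applying the dilation $u(\tau)\mapsto\lambda^{1/2}u(\lambda\tau)$ together with $\xi\mapsto\xi/\lambda$ to the Rayleigh quotient $q[a,m,\alpha;\xi](u)/\|u\|^2$, whose minimum over $(\xi,u)$ is attained at $\lambda=1$, yields the virial identity $\int\tilde w_m|(f^{a,m}_{\alpha,\xi_0})'|^2=\int\tilde w_m(\tau-\xi_0)^2|f^{a,m}_{\alpha,\xi_0}|^2$. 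The two extra terms therefore cancel, $\mathcal L(a,m,\alpha)=\beta(a,m,\alpha)$, and \eqref{item1} follows.

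The main obstacle is the quantitative, uniform-in-$\alpha$ comparison of the true ground state with the model one used above: because $(\partial_BQ)[B,\alpha]$ involves the magnetic gradient, it cannot be recovered from the asymptotics of $\lambda_1(B,\alpha)$ alone, so one genuinely needs to control $\varphi_{B,\pm}$ and $(\nabla-iB\mathbf F)\varphi_{B,\pm}$ with $o(1)$ error after rescaling, uniformly for $|\alpha-\alpha_0(a,m)|\le\epsilon_*(m)$; this rests on the refined spectral analysis of \cite{Kach}. It is also the step where the hypothesis that $\Omega$ is not a disc intervenes: for a disc the curvature of $\partial\Omega$ is constant, the effective tangential problem loses its confinement, the ground state need no longer be simple, and the behaviour of $\partial_B\lambda_{1,\pm}$ is governed by finer (flux-dependent, oscillatory) effects organised around the constant $C_2(a,m,\alpha)$, so that case is treated separately.
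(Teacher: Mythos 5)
Your reduction to the model and the algebra identifying the limit (stationarity in $\xi$ giving $\int\tilde w_m(\tau-\xi_0)|f|^2\,\md\tau=0$, plus the virial identity from dilations) are correct, but the proof has two genuine gaps, and they sit exactly where the non-disc hypothesis must do real work. First, the gauge you postulate does not exist: since $\curl\mathbf F=1$ in $\Omega$, the circulation $\oint_{\partial\Omega}\mathbf F\cdot\md s=|\Omega|\neq0$ is gauge invariant, so no field gauge-equivalent to $\mathbf F$ can have vanishing tangential component on all of the closed curve $\partial\Omega$; an adapted gauge with $|\Ab(x)|\lesssim \mathrm{dist}(x,\partial\Omega)$ can only be arranged on a tube \emph{cut} at some boundary point (this is precisely Lemma~\ref{FoHe-lem-gauge}), and one then needs the ground state to be localized away from the cut, i.e. one needs $\Pi\neq\partial\Omega$ and Lemma~\ref{HeMo}. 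In your argument the irremovable flux (the constant $\tfrac12$ in $\tilde A_1=\tfrac12-t+\tfrac{t^2}{2}$ for the disc) re-enters the Feynman--Hellmann integrand multiplied by the tangential magnetic momentum, of size $\sqrt B$, so it produces $O(1)$ contributions controlled only by quantization of the tangential modes; this is exactly the oscillatory $C_2\Delta_B^2$ effect of Theorem~\ref{fe-disc}, and it shows the conclusion \eqref{item1} is actually false for the disc. Since your computation never uses the non-disc hypothesis concretely (you only invoke it heuristically at the end), it would seemingly apply to the disc as well and contradict Theorem~\ref{monoton-disc}; something essential is therefore missing, and it is this flux/cut issue.

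Second, even granting a correct gauge, the step you flag as the ``main obstacle'' is a real gap, not a technicality: the term $-2\int w_m\,\mathbf F\cdot\mathrm{Im}(\overline\varphi\,(\nabla-iB\mathbf F)\varphi)\,\md x$ is of order $1$ (it is $|\mathbf F|\sim B^{-1/2}$ times a magnetic gradient of size $B^{1/2}$), so to pass to the limit you need the ground state, \emph{and} its magnetic gradient weighted by $\mathrm{dist}(x,\partial\Omega)$, approximated by the model profile with $o(1)$ relative error, uniformly in $\alpha$; the results quoted from \cite{Kach} (eigenvalue asymptotics, Lemma~\ref{kach-decay}, Lemma~\ref{HeMo}) give localization and energy bounds but not such a norm-approximation of eigenfunctions, and assertion (4) of Theorem~\ref{kach-AsymptAnal} concerns the $\xi$-dependence of the 1D model, not tangential structure of the 2D ground state. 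The paper's proof is designed to avoid exactly this: after the cut-tube gauge change it bounds $\partial_B\lambda_{1,\pm}$ between finite difference quotients $\big(\lambda_1(B\pm\eta B,\alpha)-\lambda_1(B,\alpha)\big)/(\pm\eta B)$ via the min-max principle, with an error $\eta B\int|\Ab\phi_{B,\alpha}|^2\,\md x=O(\eta)$ coming from the decay estimates, and then only uses the leading asymptotics of Proposition~\ref{kach-thm1} together with $\partial_B\lambda_{1,+}\leq\partial_B\lambda_{1,-}$. If you want to keep your direct Feynman--Hellmann route, you must (i) place the gauge cut at a point $s_0$ with $\mathrm{dist}(s_0,\Pi)>0$ (using that $\Omega$ is not a disc) and control the excluded region by Lemma~\ref{HeMo}, and (ii) actually prove the uniform eigenfunction approximation in the weighted magnetic $H^1$ sense, which amounts to redoing a substantial part of the analysis of \cite{Kach} at the level of eigenfunctions.
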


\begin{thm}\label{monoton-disc}(Disc domains)\\
Assume that $\Omega=D(0,1)$ is a disc. Given $a>0$ and $m> m_*$,
there exists a constant $B_0>1$ and a function $[B_0,\infty[\ni
B\mapsto g(B)$ satisfying $\displaystyle\lim_{B\to\infty}g(B)=0$,
such that if $B\geq B_0$ and $\alpha$ satisfies
(\ref{hypoth-alpha}), then
\begin{equation}\label{item2}
\partial_B\lambda_{1,+}(B,\alpha)\geq
    \alpha-\frac12C_2(a,m,\alpha)+g(B)\,.
\end{equation}
\end{thm}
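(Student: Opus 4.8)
The plan is to reduce to a one--dimensional problem by rotational symmetry. Since $\mu_1(B,\alpha)$ is gauge invariant we may replace $\mathbf F$ by $\mathbf A_0=\tfrac12(-x_2,x_1)$, and then (taking $\widetilde\Omega=D(0,R)$ concentric, as is implicit in the disc case) the operator $P[B,\alpha]$ commutes with rotations. Decomposing $L^2(\widetilde\Omega)$ into the angular sectors $\phi(r,\theta)=e^{in\theta}u(r)$, $n\in\mathbb Z$, we get a family of operators on $L^2((0,R),r\,dr)$ with the transmission condition at $r=1$, whose lowest eigenvalue I denote $\lambda^{(n)}(B,\alpha)$ (including the shift $\alpha B$, in analogy with \eqref{lambda-2}); then $\lambda_1(B,\alpha)=\min_{n\in\mathbb Z}\lambda^{(n)}(B,\alpha)$. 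For each fixed $n$ the fiber operator is an analytic family in $B$ with simple ground state, so $B\mapsto\lambda^{(n)}(B,\alpha)$ is real--analytic, and since $\lambda^{(n)}\to+\infty$ as $|n|\to\infty$ only finitely many fibers compete near a given $B$. The elementary fact that the right derivative of a minimum of finitely many $C^1$ functions is the minimum of the right derivatives over the minimizing indices gives
\[
\partial_B\lambda_{1,+}(B,\alpha)=\min\{\,\partial_B\lambda^{(n)}(B,\alpha)\ :\ \lambda^{(n)}(B,\alpha)=\lambda_1(B,\alpha)\,\},
\]
and by Feynman--Hellmann (as in the proof of Proposition~\ref{1O-PT}) $\partial_B\lambda^{(n)}(B,\alpha)=\alpha+\partial_B q_n[u_n]$, where $q_n$ is the fiber form and $u_n$ its normalized ground state. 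So it suffices to bound $\partial_B\lambda^{(n)}(B,\alpha)$ from below for every minimizing $n$.

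Next I would carry out the boundary--layer analysis at $\partial\Omega=\{r=1\}$. Rescaling $r=1-B^{-1/2}t$ and setting $\xi_n:=(B/2-n)B^{-1/2}$, the rescaled $n$--th fiber converges as $B\to\infty$ to the model operator $H[a,m,\alpha;\xi_n]$ of \eqref{II-H-}, the unit curvature of $\partial\Omega$ producing explicit $O(B^{-1/2})$ corrections. Feeding in Theorem~\ref{kach-AsymptAnal} --- in particular that under \eqref{hypoth-m}--\eqref{hypoth-alpha} the map $\xi\mapsto\mu_1(a,m,\alpha;\xi)$ has a unique non--degenerate minimum at $\xi_*:=\xi(a,m,\alpha)$ with value $\beta(a,m,\alpha)$ --- one shows that a minimizing fiber has $\xi_n\to\xi_*$ and that its rescaled ground state converges to $f^{a,m}_{\alpha,\xi_*}$, uniformly for $\alpha$ in the window \eqref{hypoth-alpha}. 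Using $-w_m(r)(n-Br^2/2)=w_m(r)\bigl(\tfrac12t^2-\sqrt B(t-\xi_n)\bigr)$ together with the normalization in $\partial_B q_n[u_n]$ then yields an asymptotic expansion of $\partial_B\lambda^{(n)}(B,\alpha)$ in which the only term that is not $O(1)$ is $\tfrac{\sqrt B}{2}\,\partial_\xi\mu_1(a,m,\alpha;\xi_n)$ (which is itself $O(1)$, since $\partial_\xi\mu_1$ vanishes at $\xi_*$ and $\xi_n-\xi_*=O(B^{-1/2})$), while the remaining $O(1)$ contributions --- including the one coming from the first--order $-R_0(\cdot)$ correction of the ground state --- assemble, via the definitions \eqref{II-C1}--\eqref{C2}, into constants expressed through the regularized resolvent.

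The final ingredient turns the minimality of $n$ into one--sided control of that $O(1)$ remainder. From the companion expansion $\lambda^{(n)}(B,\alpha)=B\mu_1(a,m,\alpha;\xi_n)+\sqrt B\,C_1(a,m,\alpha;\xi_n)+O(1)$ and the lattice relation $\xi_{n\mp1}=\xi_n\pm B^{-1/2}$, the inequalities $\lambda^{(n)}(B,\alpha)\le\lambda^{(n\pm1)}(B,\alpha)$, valid because $n$ is a minimizing index, Taylor--expand into a two--sided bound placing $\sqrt B\,\partial_\xi\mu_1(a,m,\alpha;\xi_n)$ in an interval of length $\partial_\xi^2\mu_1(a,m,\alpha;\xi_*)+o(1)$; in particular it cannot be smaller than $-\tfrac12\partial_\xi^2\mu_1(a,m,\alpha;\xi_*)-\partial_\xi C_1(a,m,\alpha;\xi_*)+o(1)$. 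Substituting this worst case into the expansion of $\partial_B\lambda^{(n)}(B,\alpha)$ from the previous step, the $\partial_\xi C_1$--terms cancel and, after reorganizing the resolvent constants, one is left exactly with $\alpha-\tfrac12C_2(a,m,\alpha)+g(B)$ for a suitable $g$ with $g(B)\to0$, uniformly in $\alpha$ satisfying \eqref{hypoth-alpha}; recalling \eqref{C2<2alpha} this in particular re--establishes positivity of the derivative, which is what is needed for the monotonicity application.

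The step I expect to be the main obstacle is the boundary--layer expansion of $\partial_B q_n[u_n]$ pushed down to its constant term, and the accompanying algebra identifying the assembled constant with $-\tfrac12C_2(a,m,\alpha)$: one must control the eigenfunction remainders in a norm strong enough to survive multiplication by $\sqrt B(t-\xi_n)$, do so uniformly in $\alpha$ near $\alpha_0(a,m)$, and keep the error differentiable in $B$, all in the presence of the transmission condition at $r=1$ --- for which the relevant $L^2$--type elliptic estimates are those of \cite{Kach2} rather than classical elliptic regularity. The breakdown of the non--degeneracy in Theorem~\ref{kach-AsymptAnal} for $m\le m_*$ is exactly what forces the standing hypothesis $m>m_*$ here.
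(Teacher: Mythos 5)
Your overall strategy (angular--momentum fibering, boundary--layer rescaling to the model family $H[a,m,\alpha;\xi]$, non-degeneracy of the minimum in $\xi$, and a worst-case-over-the-lattice argument producing the loss $-\tfrac12C_2$) is the same family of argument as the paper's, which proves the refined expansion of Theorem~\ref{fe-disc} in the appendix and then extracts \eqref{item2} as in Fournais--Helffer. But there are two genuine gaps. First, the exact fibering you rely on is not available: only $\Omega=D(0,1)$ is assumed to be a disc, while $\widetilde\Omega$ is a general smooth domain containing $D(0,1+r)$, so $P[B,\alpha]$ does \emph{not} commute with rotations and $\lambda_1$ is not a minimum of rotationally-fibered branches. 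The paper uses rotational symmetry only after cutting down, via the decay estimates of Lemma~\ref{kach-decay}, to an annulus $D(1+r)\setminus D(1/2)$ with Dirichlet conditions, and this cut-off is used solely to compute the \emph{eigenvalue} asymptotics up to $\mathcal O(B^{-\infty})$; the derivative bound for the true operator is then obtained by a difference-quotient argument (as in the general-domain proof, with a bounded gauge term and increments $\zeta\to0$ slowly), not by branch-wise Feynman--Hellmann differentiation. Transferring a pointwise one-sided derivative bound from the symmetrized, Dirichlet-truncated problem back to the true $\lambda_1$ is not automatic from $\mathcal O(B^{-\infty})$ closeness of eigenvalues, so your reduction needs this extra step.

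Second, and more quantitatively, your lattice step does not deliver the sharp constant. From the companion expansion $\lambda^{(n)}=B\mu_1(\xi_n)+\sqrt B\,C_1(\xi_n)+\mathcal O(1)$, the comparisons $\lambda^{(n)}\le\lambda^{(n\pm1)}$ only confine $\sqrt B\,\partial_\xi\mu_1(\xi_n)$ to an interval of length $\partial_\xi^2\mu_1(\xi_*)+2C+o(1)$, where $C$ is the (unspecified) constant in the $\mathcal O(1)$ error; the claimed window of length $\partial_\xi^2\mu_1(\xi_*)+o(1)$, i.e.\ $|\delta(n,B)-\delta_0|\le\tfrac12+o(1)$ for the minimizing index, requires the constant-order term of the fiber eigenvalue to be resolved \emph{including its dependence on} $\delta$, namely $C_2(a,m,\alpha)\bigl((\delta-\delta_0)^2+\mathcal C_0\bigr)$ with $C_2=\tfrac12\partial_\xi^2\mu_1(\xi)>0$. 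Without this, the factor $\tfrac12$ in front of $C_2$ in \eqref{item2} is replaced by an uncontrolled constant, which destroys the application (positivity of the right-hand side rests on \eqref{C2<2alpha} and \eqref{condition-m}). Resolving that constant-order term, uniformly for bounded $\delta$ and for $\alpha$ in the window \eqref{hypoth-alpha}, is precisely the content of Theorem~\ref{fe-disc}, proved by a two-corrector quasimode construction with the regularized resolvent and the identification $C_2=\tfrac12\mu''(\xi)$; in your proposal this is relegated to a closing remark about ``the main obstacle,'' but it is the heart of the proof rather than a technical afterthought, and it is also needed (not optional) in your neighbor-comparison step.
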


In view of (\ref{cont-fun}), we get as corollary of
Theorems~\ref{thm-monton} and \ref{monoton-disc}:

\begin{thm}\label{corol-monton}
Let $a>0$ and $m>m_*$. Assume that
$\Omega\subset\mathbb R^2$ has a smooth and compact
boundary, and if $\Omega$ is a disc, assume in
addition that,
\begin{eqnarray}\label{condition-m}
&&\hskip-2cm\frac1a\left(a\alpha_0(a,m)+\frac12C_2(a,m,\alpha_0(a,m))\right)
\left(1-\frac{\Theta_0}{\alpha_0(a,m)}\right)\\
&&\hskip2cm<\alpha_0(a,m)-\frac12C_2(a,m,\alpha_0(a,m))\,.\nonumber
\end{eqnarray}
Then  there exists $\kappa_0>0$ such that, for all $\kappa\geq
\kappa_0$,
there is a unique $H_*(\kappa)>0$ solving the equation
$$\mu^{(1)}(\kappa,H_*(\kappa))=0\,,$$
where $\mu^{(1)}(\kappa,H)$ is introduced in (\ref{EV}).\\
Moreover,
for all $H>H_*(\kappa)$ and $\kappa\geq\kappa_0$,
$\mu^{(1)}(\kappa,H)>0$.
\end{thm}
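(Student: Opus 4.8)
The plan is to show that $H\mapsto\mu^{(1)}(\kappa,H)$ is strictly negative on a left interval, strictly increasing on a shrinking transition window, and strictly positive to the right of it; the existence and uniqueness of $H_*(\kappa)$ and the sign statement then follow by continuity and the intermediate value theorem. Throughout I use $\mu^{(1)}(\kappa,H)=\mu_1(\kappa H,\kappa/H)$ and $\lambda_1(B,\alpha)=\mu_1(B,\alpha)+\alpha B$.

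\emph{Step 1 (localisation).} Fix $\delta\in(0,\epsilon_*(m)]$. By Remark~\ref{rem-int-1}, applied to both $\underline H^{\rm loc}_{C_3}$ and $\overline H^{\rm loc}_{C_3}$, these fields equal $\frac{\kappa}{\alpha_0(a,m)}(1+o(1))$; hence for $\kappa$ large $\mu^{(1)}(\kappa,H)<0$ whenever $H\le\kappa/(\alpha_0+\delta)$, while on $I_\kappa:=[\kappa/(\alpha_0+\delta),\kappa/(\alpha_0-\delta)]$ the parameter $\alpha=\kappa/H$ obeys \eqref{hypoth-alpha} and $B=\kappa H\to\infty$. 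For $H$ to the right of $I_\kappa$ one has $\alpha<\alpha_0-\delta$, so by Theorem~\ref{kach-AsymptAnal}(2) together with \eqref{beta*} for small $\alpha$ there is $\beta_\delta>0$ with $\beta(a,m,\alpha)\ge\beta_\delta$; combined with the semiclassical asymptotics $\mu_1(B,\alpha)=\beta(a,m,\alpha)B+O(B^{1/2})$ from \cite{Kach} this gives $\mu^{(1)}(\kappa,H)\ge\tfrac12\beta_\delta\,\kappa H>0$ there. Thus everything reduces to proving $\partial_H\mu^{(1)}_+(\kappa,H)>0$ on $I_\kappa$ for $\delta$ small and $\kappa$ large.

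\emph{Step 2 (the derivative on $I_\kappa$).} Here I invoke Proposition~\ref{1O-PT}: since $V_a+1=(a+1)\mathbf 1_{\widetilde\Omega\setminus\Omega}$ and $\kappa/H=\alpha$, it gives $\partial_H\mu^{(1)}_+(\kappa,H)=\kappa\big(\partial_B\lambda_{1,+}(B,\alpha)-\alpha(a+1)N_+\big)$, where $N_+:=\int_{\widetilde\Omega\setminus\Omega}|\varphi_{H_+}|^2\,dx\le1$ with $\|\varphi_{H_+}\|_{L^2(\widetilde\Omega)}=1$. For $\partial_B\lambda_{1,+}$ I feed in the monotonicity results: Theorem~\ref{thm-monton} gives $\partial_B\lambda_{1,+}(B,\alpha)=\beta(a,m,\alpha)+\alpha+o(1)$ uniformly in $\alpha$ when $\Omega$ is not a disc, and Theorem~\ref{monoton-disc} gives $\partial_B\lambda_{1,+}(B,\alpha)\ge\alpha-\tfrac12C_2(a,m,\alpha)+g(B)$ when it is. The crucial remaining step is an \emph{a priori} bound on $N_+$: setting $P:=1-N_+$, the identity $\lambda_1=\mu_1+\alpha B$ yields $\int_{\widetilde\Omega}w_m|\nabla_{B\mathbf F}\varphi_{H_+}|^2\,dx=\mu_1(B,\alpha)+\alpha B\big(1-(a+1)N_+\big)$, whereas the left-hand side is $\ge\int_\Omega|\nabla_{B\mathbf F}\varphi_{H_+}|^2\,dx\ge\mu^N(B;\Omega)\,P\ge(\Theta_0B-CB^{1/2})P$, using the standard lower bound for the lowest Neumann eigenvalue of $-(\nabla-iB\mathbf F)^2$ in $\Omega$ (see \cite{FoHe07}). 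On $I_\kappa$ one has $\mu_1(B,\alpha)=\beta(a,m,\alpha)B+o(B)$ (from \cite{Kach}) with $|\beta(a,m,\alpha)|\le C\delta$, so dividing by $B$ and solving the resulting inequality for $N_+$ gives $N_+\le\frac{\alpha-\Theta_0}{\alpha(a+1)-\Theta_0}+O(\delta)+o(1)$; as $\delta\to0$ and $B\to\infty$ the right-hand side tends to $N^*:=\frac{\alpha_0-\Theta_0}{\alpha_0(a+1)-\Theta_0}$, whose denominator is positive since $\alpha_0>\Theta_0$.

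\emph{Step 3 (conclusion).} Inserting this bound, for a non-disc $\Omega$ we obtain $\partial_H\mu^{(1)}_+(\kappa,H)\ge\kappa\big(\beta(a,m,\alpha)+\alpha(1-(a+1)N^*)+o(1)\big)\to\kappa\,\frac{a\alpha_0\Theta_0}{\alpha_0(a+1)-\Theta_0}>0$; for a disc we obtain $\partial_H\mu^{(1)}_+(\kappa,H)\ge\kappa\big(\alpha-\tfrac12C_2(a,m,\alpha)-\alpha(a+1)N^*+g(B)+o(1)\big)$, whose limiting value $\alpha_0-\tfrac12C_2(a,m,\alpha_0)-\alpha_0(a+1)N^*$ is positive \emph{exactly} because of \eqref{condition-m}, which upon cross-multiplication is equivalent to $\tfrac12C_2(a,m,\alpha_0)\big(\alpha_0(a+1)-\Theta_0\big)<a\alpha_0\Theta_0$. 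Hence, with $\delta$ small and then $\kappa_0$ large, $\partial_H\mu^{(1)}_+(\kappa,H)>0$ on $I_\kappa$, so the continuous function $\mu^{(1)}(\kappa,\cdot)$ is strictly increasing there; being $\le0$ at the left endpoint of $I_\kappa$ and $>0$ at the right endpoint it has a unique zero $H_*(\kappa)\in I_\kappa$, and by Step~1 it is $<0$ left of $I_\kappa$ and $>0$ right of $I_\kappa$, so $\mu^{(1)}(\kappa,H)<0$ for $H<H_*(\kappa)$ and $\mu^{(1)}(\kappa,H)>0$ for $H>H_*(\kappa)$. I expect the disc case to be the delicate point: Theorem~\ref{monoton-disc} supplies only a one-sided bound with the possibly positive loss $\tfrac12C_2$, so the bound on $N_+$ must be sharp — and it is precisely sharp enough, since discarding the normal-region magnetic energy and keeping only $\mu^N(B;\Omega)P$ already produces the value $N^*$ against which \eqref{condition-m} is calibrated — while one must also check that the $o(1)$ and $g(B)$ errors are uniform in $\alpha\in[\alpha_0-\delta,\alpha_0+\delta]$, which they are by Theorems~\ref{thm-monton}--\ref{monoton-disc} and the fixed-domain Neumann asymptotics.
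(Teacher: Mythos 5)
Your argument is correct and, at its core, it is the paper's own proof: the paper likewise combines the perturbation formula \eqref{cont-fun}, the monotonicity statements of Theorems~\ref{thm-monton} and \ref{monoton-disc}, and a bound on the exterior mass of the ground state obtained from the eigenvalue identity $Q[B,\alpha](\phi_{B,\alpha})=\mu_1(B,\alpha)$ together with the magnetic Neumann lower bound in $\Omega$; the paper's inequality $a\int_{\widetilde\Omega\setminus\Omega}|\phi_{B,\alpha}|^2\,dx\leq\bigl(1-\tfrac{\Theta_0}{\alpha_0}+o(1)\bigr)\int_\Omega|\phi_{B,\alpha}|^2\,dx$ is exactly your bound $N_+\leq\tfrac{\alpha_0-\Theta_0}{\alpha_0(a+1)-\Theta_0}+o(1)$, and your cross-multiplied form of \eqref{condition-m} is the correct equivalence, so the disc case is calibrated exactly as in the paper (in the non-disc case the paper is cruder, using $\tfrac1m\mu^N(B;\widetilde\Omega)$ to get the positive lower bound $\tfrac{\Theta_0}{m}+o(1)$, while you reuse the sharp mass bound; both work). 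The only genuine difference is how the values of $H$ outside the transition window are handled: the paper imports from \cite{Kach} that every zero of $\mu^{(1)}(\kappa,\cdot)$ lies in $|H-\kappa/\alpha_0(a,m)|\leq\kappa\delta(\kappa)$ and that a zero exists, and then only needs strict monotonicity on that window, whereas you re-derive the sign of $\mu^{(1)}$ to the left and right of $I_\kappa$ from Remark~\ref{rem-int-1} and from $\mu_1(B,\alpha)=\beta(a,m,\alpha)B+o(B)$. That last citation is the one place where you overreach slightly: Proposition~\ref{kach-thm1} is stated only for $\alpha$ satisfying \eqref{hypoth-alpha}, so it covers $\alpha\in[\alpha_0(a,m)-\epsilon_*(m),\alpha_0(a,m)-\delta]$ but not the full range $\alpha\in(0,\alpha_0(a,m)-\delta]$ that arises when $H\gg\kappa$. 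This is easily repaired: for small $\alpha$ use the crude bound $\mu_1(B,\alpha)\geq\tfrac1m\mu^N(B;\widetilde\Omega)-\alpha B\geq\bigl(\tfrac{\Theta_0}{m}-\alpha\bigr)B+o(B)$, and for $\alpha$ in the remaining compact subinterval of $(0,\alpha_0(a,m))$ invoke the leading-order asymptotics of \cite{Kach} (or simply quote, as the paper does, the localization of the zero set). With that patch your proof is complete and matches the paper's reasoning step for step.
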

\begin{rem}\label{rem:Conclusion}
Thanks to the asymptotic result of Theorem~\ref{kach-AsymptAnal},
the condition
  (\ref{condition-m}) is fulfilled for large values of $m$. Thus
  Theorem~\ref{kach-AsymptAnal} and Theorem~\ref{corol-monton} imply Theorem~\ref{thm:identification}.
\end{rem}

\begin{proof}[Proof of Theorem~\ref{corol-monton}] From
\cite{Kach}, we get a constant $\kappa_0$ and a positive function
$\delta(\kappa)$ such that $\lim_{\kappa\to\infty}\delta(\kappa)=0$
and
$$
\forall~\kappa\in[\kappa_0,\infty[\,,\quad\mu^{(1)}(\kappa,H)=0\implies
\left|H-\frac{\kappa}{\alpha_0(a,m)}\right|\leq
\kappa\delta(\kappa)\,.$$ It is moreover proved that one may find a
solution $H>0$ such that $\mu^{(1)}(\kappa,H)=0$.

 Let
$H_*(\kappa)=\min\{H>0~:~\mu^{(1)}(\kappa,H)=0\}$. It is sufficient
to show that $\mu^{(1)}(\kappa,H)>0$ for any magnetic field $H$
satisfying \begin{equation}\label{f(B)} H_*(\kappa)<H\leq
\frac{\kappa}{\alpha_0(a,m)}+\kappa\delta(\kappa)\,.\end{equation}
Thus, for such a magnetic field, we put $B=B(\kappa;H)=\kappa H$ and
$\alpha=\alpha(\kappa;H)=\kappa/H$
so that we can pick $\kappa_0>0$ sufficiently large such that the
hypothesis (\ref{hypoth-alpha}) is valid, and hence the results of
Theorems~\ref{thm-monton} and \ref{monoton-disc} hold.

Let us show that the function $H\mapsto \mu_1(\kappa,H)$ is strictly
increasing in the interval
\begin{equation}\label{interval}
\left]\frac{\kappa}{\alpha_0(a,m)}-\kappa\delta(\kappa)\,,\,
\frac{\kappa}{\alpha_0(a,m)}+\kappa\delta(\kappa)\right[\,.\end{equation}
We assume that $\kappa\geq\kappa_0$. From (\ref{cont-fun}), we write
\begin{equation}\label{correction1}
\partial_H\,\mu^{(1)}_{\pm}(\kappa,H)
=\kappa\left(\partial_B\lambda_{1,\pm}(B,\alpha)
-\alpha\int_{\widetilde \Omega}(V_a(x)+1)|\phi_{B,\alpha}|^2\, d  x\right)\,,
\end{equation}
where $\phi_{B,\alpha}$ is an $L^2$-normalized eigenfunction
associated with the lowest
eigenvalue $\mu_1(B,\alpha)$.\\
Assume first that $\Omega$ is not a disc. Invoking
Theorem~\ref{thm-monton}, we get, provided that $\kappa_0$ is large
enough,
\begin{equation}\label{correction2}
\partial_H\,\mu^{(1)}_{\pm}(\kappa,H)\geq
\kappa\alpha\left(1- \int_{\widetilde\Omega}(V_a(x)+1)|\phi_{B,\alpha}|^2\, d 
x+o(1)\right)\,,
\end{equation}
where we use also that for $H$ in the interval (\ref{interval}),
$\beta(a,m,\alpha)=o(1)$ as $\kappa\to\infty$ (see
Theorem~\ref{kach-AsymptAnal}).

Since $\phi_{B,\alpha}$ is
normalized in $L^2(\widetilde\Omega)$ and an eigenfunction
associated with $\mu_1(B,\alpha)$, we write,
\begin{align*}
\alpha\Big(1-\int_{\widetilde\Omega}(V_a(x)&+1)|\phi_{B,\alpha}|^2\, d 
x\Big)\\
&=\alpha\int_\Omega|\phi_{B,\alpha}|^2\, d 
x-a\alpha\int_{\widetilde\Omega\setminus\Omega}|\phi_{B,\alpha}|^2\, d  x\\
&=B^{-1}\left(\int_{\widetilde\Omega} w_m(x)|(\nabla-iB\mathbf
F)\phi_{B,\alpha}|^2\, d  x-\mu_1(B,\alpha)\right)\,.
\end{align*}
Using the min-max variational principle, we infer from the above,
$$ \alpha\left(1-\int_{\widetilde\Omega}(V_a(x)+1)|\phi_{B,\alpha}|^2\, d 
x\right)\geq B^{-1}\left(\frac1m\mu^N(B)+\mu_1(B,\alpha)\right)\,,$$
where $\mu^N(B)$ is the lowest eigenvalue of the Neumann magnetic
Laplacian\break $-(\nabla-iB\mathbf F)^2$ in $\widetilde\Omega$.\\
It is a standard result that $\mu^N(B)=\Theta_0B+o(B)$ as
$B\to\infty$ (see \cite{HeMo1}), and under the condition that $H$
remains in the interval (\ref{interval}), it is proved in
\cite{Kach} that $\mu_1(B,\alpha)=o(B)$ as $B\to\infty$ (see
Proposition~\ref{kach-thm1} below). Thus, provided that $H$ is in
the interval (\ref{interval}) and $\kappa_0$ is large enough, we get
$$\alpha\left(1-\int_{\widetilde\Omega}(V_a(x)+1)|\phi_{B,\alpha}|^2\, d 
x\right)\geq \frac{\Theta_0}m+o(1),$$ and upon substitution in
(\ref{correction2}), we get when $\Omega$ is not a disc
$$\partial_H\mu^{(1)}_{\pm}(\kappa,H)>0\,,\quad\forall~\kappa\geq\kappa_0\,.$$
Now, we assume that $\Omega$ is a disc. In this case,
Theorem~\ref{monoton-disc} and (\ref{correction1}) together yield,
\begin{equation}\label{correction3}
\partial_H\,\mu^{(1)}_{\pm}(\kappa,H)\geq
\kappa\left(\alpha-\frac12C_2(a,m,\alpha) -\alpha
(a+1)\int_{\widetilde\Omega\setminus\Omega}|\phi_{B,\alpha}|^2\, d 
x+o(1)\right).
\end{equation}
Again, using $\int_{\widetilde\Omega}|\phi_{B,\alpha}|^2 d  x=1$, we
write,
\begin{eqnarray*}
&&\hskip-0.5cm\alpha-\frac12C_2(a,m,\alpha) -\alpha
(a+1)\int_{\widetilde\Omega\setminus\Omega}|\phi_{B,\alpha}|^2\, d 
x\\
&&=\left(\alpha-\frac12C_2(a,m,\alpha)\right)\int_\Omega|\phi_{B,\alpha}|^2\, d 
x-\left(a\alpha+\frac12C_2(a,m,\alpha)\right)
\int_{\widetilde\Omega\setminus\Omega}|\phi_{B,\alpha}|^2\, d  x\,.
\end{eqnarray*}
Now, we use that $\phi_{B,\alpha}$ is an eigenfunction associated
with $\mu_1(B,\alpha)$. From the identity
$Q[B,\alpha](\phi_{B,\alpha})=\mu_1(B,\alpha)$ and the min-max
principle, we deduce,
$$a\int_{\widetilde\Omega\setminus\Omega}
|\phi_{B,\alpha}|^2\, d  x\leq \left(1-
\frac{\mu^N(B,\Omega)}{\alpha}\right)\int_\Omega|\phi_{B,\alpha}|^2\, d 
x\,.$$ Here $\mu^N(B,\Omega)$ is the first eigenvalue of the Neumann
magnetic Laplacian $-(\nabla-iB\Ab)^2$ in $\Omega$. Using again the
asymptotic behavior of $\mu^N(B)$ as $B\to\infty$ and the condition
that $H$ remains in (\ref{interval}), we deduce,
$$a\int_{\widetilde\Omega\setminus\Omega}
|\phi_{B,\alpha}|^2\, d  x\leq \left(1-
\frac{\Theta_0}{\alpha_0(a,m)}+o(1)\right)\int_\Omega|\phi_{B,\alpha}|^2\, d 
x\,.$$ Let us define,
\begin{eqnarray*}
\delta_0&=&\alpha-\frac12C_2(a,m,\alpha_0(a,m))\\
&&-\frac1a\left(a\alpha+\frac12
C_2(a,m,\alpha_0(a,m))\right)\left(1-\frac{\Theta_0}{\alpha_0(a,m)}\right)\,.
\end{eqnarray*}
Under the condition (\ref{condition-m}), $\delta_0>0$. Under the
hypothesis that $H$ remains in the interval (\ref{interval}), we get
provided that $\kappa_0$ is large enough,
$$
\alpha-\frac12C_2(a,m,\alpha) -\alpha
(a+1)\int_{\widetilde\Omega\setminus\Omega}|\phi_{B,\alpha}|^2\, d 
x\geq \frac{\delta_0}2\,,$$ for all $\kappa\geq\kappa_0$.
Substituting in (\ref{correction3}), we get the desired result that
the function $H\mapsto \mu^{(1)}(\kappa,H)$ is strictly increasing
in the interval (\ref{interval}) for all $\kappa\geq\kappa_0$.
\end{proof}

To prove  Theorem~\ref{thm-monton} we deal separately with the case
where the domain $\Omega$ is a disc and the case where it is not.
This will be the subject of the next sections, but we review first
some of the known results concerning $\mu_1(B,\alpha)$.

\subsection{Some facts concerning $\mu_1(B,\alpha)$}
Let us recall the main results obtained in \cite{Kach} for
$\mu_1(B,\alpha)$, the bottom of the spectrum of the operator
$P[B,\alpha]$ associated with the quadratic form (\ref{qf-B}). We
mention that in \cite{Kach} we deal with $\mu^{(1)}(\kappa,H)$ rather
than $\mu_1(B,\alpha)$, but due to (\ref{relation}) we get equivalent
results for $\mu_1(B,\alpha)$.

We start by giving the  leading order term of $\mu_1(B,\alpha)$ as
$B\to\infty$ (\cite[Section~5]{Kach}).

\begin{prop}\label{kach-thm1}
Given $a>0$ and $m>0$, there exist a constant $B_0>0$ and a
function $[B_0,\infty[\ni B\mapsto g(B)\in\mathbb R_+$ satisfying
$\displaystyle\lim_{B\to\infty}g(B)=0$ such that, for any
$\alpha>0$ fulfilling (\ref{hypoth-alpha}),
the following asymptotic holds:
$$\left|\mu_1(B,\alpha)
-\beta(a,m,\alpha) B\right|\leq g(B)B\,,\quad \forall~B\geq B_0\,,$$
where $\beta(a,m,\alpha)$ is introduced in (\ref{beta}).
\end{prop}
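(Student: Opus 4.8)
The plan, following \cite[Section~5]{Kach}, is to prove matching upper and lower bounds for $\mu_1(B,\alpha)$ as $B\to\infty$, in both of which the operator $P[B,\alpha]$ is compared, after a blow-up by the factor $\sqrt B$ near the interface $\partial\Omega$, with the one-dimensional fibered model operator $H[a,m,\alpha;\xi]$ of \eqref{II-H-}; by \eqref{beta} the bottom of the spectrum of the corresponding direct integral is exactly $\beta(a,m,\alpha)$, which explains the leading term. Every error term will be made uniform in $\alpha$ over the compact interval \eqref{hypoth-alpha}, using that $\beta(a,m,\alpha)$, the minimizing phase set $\mathrm M(a,m,\alpha)$, the eigenfunctions $f^{a,m}_{\alpha,\xi}$ and their exponential decay depend continuously on $\alpha$ there.

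For the upper bound I would build a quasimode concentrated near $\partial\Omega$. Working in boundary coordinates $(s,t)$ ($s$ the arclength on $\partial\Omega$, $t$ the signed distance, $t>0$ in $\Omega$), one gauges $\mathbf F$ into a linear vector potential on the scale $t\sim B^{-1/2}$, the cost being a curvature-induced error of relative size $O(B^{-1/2})$. The trial state $\chi(s)\,B^{1/4}f^{a,m}_{\alpha,\xi^\ast}(\sqrt B\,t)\,e^{i\sqrt B\,\xi^\ast s}$, with $\xi^\ast\in\mathrm M(a,m,\alpha)$ and $\chi$ a fixed cutoff, is then inserted into $Q[B,\alpha]$; since $w_m,V_a$ are precisely the model coefficients and the transmission condition $u'(0_+)=\tfrac1m u'(0_-)$ is built into $f^{a,m}_{\alpha,\xi^\ast}\in D(H[a,m,\alpha;\xi^\ast])$, one gets $Q[B,\alpha](\text{trial})\le(\beta(a,m,\alpha)+o(1))B\,\|\text{trial}\|_{L^2}^2$ with $o(1)$ uniform in $\alpha$, and the min--max principle gives $\mu_1(B,\alpha)\le\beta(a,m,\alpha)B+o(B)$.

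For the lower bound I would use an IMS partition of unity into balls of radius $B^{-\rho}$ with $\tfrac14<\rho<\tfrac12$, so that the localization error $\sum_j|\nabla\chi_j|^2=O(B^{2\rho})$ is $o(B)$ while the ball radius still dominates the microscale $B^{-1/2}$. One then estimates each localized form from below: on balls inside $\Omega$, the lowest-Landau-level estimate and $V_a=-1$ give $\ge(1-\alpha)B\|\cdot\|^2\ge\beta(a,m,\alpha)B\|\cdot\|^2$ (using $\alpha+\beta(a,m,\alpha)\le1$); on balls contained in $\widetilde\Omega\setminus\Omega$, resp. meeting $\partial\widetilde\Omega$, the positivity of the potential term $a\alpha B$, resp. the Neumann bound $\tfrac{\Theta_0}{m}B$, exceeds $\beta(a,m,\alpha)B$ for $B$ large since $\beta(a,m,\alpha)$ is uniformly small on \eqref{hypoth-alpha}; on balls meeting $\partial\Omega$ one rescales by $\sqrt B$, freezes the coefficients, gauges $\mathbf F$ to a linear field and compares with the fibered model to obtain $\ge(\beta(a,m,\alpha)-o(1))B\|\cdot\|^2$. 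Summing over $j$ and subtracting the localization error yields $\mu_1(B,\alpha)\ge\beta(a,m,\alpha)B-o(B)$; combined with the upper bound this gives the claim with $g(B)$ the larger of the two relative errors.

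The delicate point is the interface balls: the jump of $w_m$ across $\partial\Omega$ forces one to carry out the reduction to constant coefficients while preserving the transmission condition, and to remove $\mathbf F$ by a Poincar\'e-type local gauge whose error (of relative size $B^{-2\rho+1/2}$ after rescaling, hence the constraint $\rho>\tfrac14$) must be balanced against the localization error (of size $B^{2\rho}$, hence $\rho<\tfrac12$). Keeping every constant uniform in $\alpha$ on \eqref{hypoth-alpha}---in particular the decay estimates used to truncate the tails of $f^{a,m}_{\alpha,\xi}$ and to transfer between the half-line problems and the problem on a small curved strip---is the other place demanding care, and is handled via continuity of the model data in $\alpha$ together with compactness of the interval.
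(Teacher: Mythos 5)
The paper itself contains no proof of this proposition: it is imported from \cite[Section~5]{Kach} (where, note, the normal material fills all of $\mathbb{R}^2$). So there is no internal argument to compare with; your two-sided blow-up scheme --- quasimodes built from $f^{a,m}_{\alpha,\xi^*}$ at the interface for the upper bound, IMS localization plus comparison with the fibered model \eqref{II-H-} for the lower bound, with the $\tfrac14<\rho<\tfrac12$ balance of gauge versus localization errors and uniformity in $\alpha$ by compactness --- is indeed the expected route, and the interface part of your sketch is sound.

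There is, however, a genuine gap in how you dismiss the balls away from $\partial\Omega$. You assert that on exterior balls the potential term $a\alpha B$, and on balls meeting $\partial\widetilde\Omega$ the Neumann bound $\Theta_0 B/m$, ``exceed $\beta(a,m,\alpha)B$ \dots since $\beta$ is uniformly small on \eqref{hypoth-alpha}''. That is not a justification: for $\alpha\le\alpha_0(a,m)$ one only knows $0\le\beta(a,m,\alpha)\le\alpha_0(a,m)-\alpha\le\epsilon_*(m)$, while $a$ may be arbitrarily small and $m$ arbitrarily large, so neither $a\alpha\ge\beta$ nor $\Theta_0/m\ge\beta$ is automatic (and \eqref{beta*} is a lower bound on $\beta$, pointing the wrong way). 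For exterior balls not meeting $\partial\widetilde\Omega$ the repair is easy: keep the kinetic term, which by the standard magnetic inequality of \cite{AHS} contributes $B/m$ for compactly supported functions, and note $\beta\le\tfrac1m+a\alpha$, obtained by sending $\xi\to-\infty$ in \eqref{IIpVP} with trial functions supported on $\mathbb{R}_-$; likewise the interior balls need $\beta\le 1-\alpha$ (trial functions supported on $\mathbb{R}_+$, $\xi\to+\infty$), which you use but should record. The balls meeting the outer boundary $\partial\widetilde\Omega$ are the real issue: there the best available local bound is $(\Theta_0/m+a\alpha-o(1))B$, so your scheme needs $\beta(a,m,\alpha)\le\Theta_0/m+a\alpha$ uniformly on \eqref{hypoth-alpha}. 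This cannot be extracted from the transmission model by the same trick (a Neumann condition at an artificial boundary is not admissible for \eqref{II-H-}, and extending the de~Gennes minimizer across $t=0$ costs an $O(1)$ amount that is not multiplied by $1/m$), and it is exactly the point where the bounded $\widetilde\Omega$ of the present paper differs from the $\widetilde\Omega=\mathbb{R}^2$ setting of \cite{Kach}; if it failed, a trial state concentrated on $\partial\widetilde\Omega$ would even contradict the stated asymptotics. So this inequality must be proved (or the outer-boundary region handled by a separate argument, e.g.\ by working far enough from $\partial\widetilde\Omega$ after an a priori localization), not waved through via ``smallness of $\beta$''.
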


Letting $\phi_{B,\alpha}$ be a ground state of $P[B,\alpha]$, it is
proved in \cite[Section~6]{Kach} that for $m>1$, $\phi_{B,\alpha}$
is localized near the
interior boundary $\partial\Omega$ in the sense precised in the next lemma.

\begin{lem}\label{kach-decay}
Under the hypotheses of Proposition~\ref{kach-thm1}, given $a>0$,
$m>1$
and an integer $k\in\mathbb N$, there exist
positive constants $B_k$ and $C_k$ such that,
\begin{eqnarray}
&&\int_{\widetilde\Omega} [{\rm
      dist}(x,\partial\Omega)]^k\,|\phi_{B,\alpha}(x)|^2\, d  x\leq
      \frac{C_k}{\sqrt{B}^{\,k}}
\left\|\phi_{B,\alpha}\right\|_{L^2(\widetilde\Omega)}^2\,,\label{decay-1}\\
&&\int_{\widetilde\Omega} [{\rm
      dist}(x,\partial\Omega)]^k\,|(\nabla-iB\mathbf F)\phi_{B,\alpha}(x)|^2\, d  x\leq
      \frac{C_k}{\sqrt{B}^{\,k-2}}
\left\|\phi_{B,\alpha}\right\|_{L^2(\widetilde\Omega)}^2\,,\label{decay-2}
\end{eqnarray}
for all $B \geq B_k$.
\end{lem}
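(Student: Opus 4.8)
The plan is to prove \eqref{decay-1} and \eqref{decay-2} simultaneously, by induction on the \emph{even} integers $k$ with the inductive step going from $k-2$ to $k$, and then to recover the odd values of $k$ from the even ones by Cauchy--Schwarz interpolation (e.g. $\int\rho^{k}|\phi|^{2}\le(\int\rho^{k-1}|\phi|^{2})^{1/2}(\int\rho^{k+1}|\phi|^{2})^{1/2}$ with $k\pm1$ even, and likewise for the gradient weights). Here $\phi=\phi_{B,\alpha}$ and $\rho\in C^{\infty}(\widetilde\Omega)$ is a fixed bounded nonnegative function equal to $\mathrm{dist}(\cdot,\partial\Omega)$ in a tubular neighborhood of $\partial\Omega$; it is enough to bound the weighted integrals with $\rho$ in place of the distance. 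The base case $k=0$ is immediate: from $Q[B,\alpha](\phi)=\mu_{1}(B,\alpha)\|\phi\|_{L^{2}(\widetilde\Omega)}^{2}$, the bound $V_{a}\ge-1$, and $\mu_{1}(B,\alpha)\le CB$ (a consequence of Proposition~\ref{kach-thm1}), one gets $\int_{\widetilde\Omega}|(\nabla-iB\mathbf F)\phi|^{2}\le m\int_{\widetilde\Omega}w_{m}|(\nabla-iB\mathbf F)\phi|^{2}\le CB\|\phi\|_{L^{2}(\widetilde\Omega)}^{2}$, which is \eqref{decay-2} at $k=0$, while \eqref{decay-1} at $k=0$ is trivial.

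For the inductive step fix an even $k\ge2$ and insert $v=\rho^{k}\phi\in H^{1}(\widetilde\Omega)$ into the weak form of $P[B,\alpha]\phi=\mu_{1}(B,\alpha)\phi$; taking real parts gives
\[
\int_{\widetilde\Omega}w_{m}\rho^{k}|(\nabla-iB\mathbf F)\phi|^{2}\,dx+\alpha B\int_{\widetilde\Omega}V_{a}\rho^{k}|\phi|^{2}\,dx=\mu_{1}(B,\alpha)\int_{\widetilde\Omega}\rho^{k}|\phi|^{2}\,dx-k\,{\rm Re}\int_{\widetilde\Omega}w_{m}\rho^{k-1}(\nabla\rho\cdot(\nabla-iB\mathbf F)\phi)\,\overline\phi\,dx.
\]
I would bound the left-hand side from below as follows. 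On $\widetilde\Omega\setminus\Omega$ one has $w_{m}=1/m$ and $V_{a}=a>0$, so that contribution is $\ge\frac1m\int_{\widetilde\Omega\setminus\Omega}\rho^{k}|(\nabla-iB\mathbf F)\phi|^{2}+c_{1}B\int_{\widetilde\Omega\setminus\Omega}\rho^{k}|\phi|^{2}$ with $c_{1}=a(\alpha_{0}(a,m)-\epsilon_{*}(m))>0$. On $\Omega$ one has $w_{m}=1$, $V_{a}=-1$; since $k\ge2$ the trace of $\rho^{k/2}\phi$ on $\partial\Omega$ vanishes, hence $\rho^{k/2}\phi\in H^{1}_{0}(\Omega)$, and the elementary magnetic inequality used in the proof of Lemma~\ref{lem-BoFo} together with $\curl\mathbf F=1$ gives $\int_{\Omega}|(\nabla-iB\mathbf F)(\rho^{k/2}\phi)|^{2}\ge B\int_{\Omega}\rho^{k}|\phi|^{2}$; expanding the left side and using $|\nabla\rho|\le C$ then yields $\int_{\Omega}\rho^{k}|(\nabla-iB\mathbf F)\phi|^{2}\ge\frac{B}{1+\epsilon}\int_{\Omega}\rho^{k}|\phi|^{2}-C_{k,\epsilon}\int_{\Omega}\rho^{k-2}|\phi|^{2}$ for any small $\epsilon>0$. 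Since $m>1$ forces $\alpha_{0}(a,m)<1$, hypothesis \eqref{hypoth-alpha} keeps $\alpha$ bounded away from $1$, so I can retain a small fixed fraction $\theta$ of $\int_{\Omega}\rho^{k}|(\nabla-iB\mathbf F)\phi|^{2}$ and still have $\frac{1-\theta}{1+\epsilon}-\alpha\ge c_{0}'>0$; altogether the left-hand side is $\ge c_{3}\int_{\widetilde\Omega}\rho^{k}|(\nabla-iB\mathbf F)\phi|^{2}+c_{2}B\int_{\widetilde\Omega}\rho^{k}|\phi|^{2}-C_{k}\int_{\widetilde\Omega}\rho^{k-2}|\phi|^{2}$, with $c_{2},c_{3}>0$ depending only on $a,m$.

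To conclude, on the right-hand side I use $\mu_{1}(B,\alpha)\le(\beta(a,m,\alpha)+g(B))B$ from Proposition~\ref{kach-thm1} together with $\beta(a,m,\alpha)<c_{2}$ uniformly for $\alpha$ in the admissible interval (this holds since $\beta(a,m,\alpha_{0}(a,m))=0$ and $\beta(a,m,\cdot)$ is continuous, by Theorem~\ref{kach-AsymptAnal}), so that for $B$ large $\mu_{1}(B,\alpha)\int\rho^{k}|\phi|^{2}$ is absorbed into $c_{2}B\int\rho^{k}|\phi|^{2}$; likewise $|k\,{\rm Re}\int w_{m}\rho^{k-1}(\nabla\rho\cdot(\nabla-iB\mathbf F)\phi)\overline\phi|\le\epsilon\int\rho^{k}|(\nabla-iB\mathbf F)\phi|^{2}+C_{k,\epsilon}\int\rho^{k-2}|\phi|^{2}$, and the choice $\epsilon=c_{3}/2$ absorbs the gradient term. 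What remains is $B\int\rho^{k}|\phi|^{2}+\int\rho^{k}|(\nabla-iB\mathbf F)\phi|^{2}\le C\int\rho^{k-2}|\phi|^{2}$, and the induction hypothesis $\int\rho^{k-2}|\phi|^{2}\le C_{k-2}B^{-(k-2)/2}\|\phi\|_{L^{2}(\widetilde\Omega)}^{2}$ gives \eqref{decay-1} and \eqref{decay-2} at level $k$. The odd values of $k$ follow by the interpolation described above.

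The step I expect to be the main obstacle is the passage across the interface $\partial\Omega$: one must verify that $\rho^{k}\phi$ is a legitimate test function in the weak (transmission) formulation and, above all, that $\rho^{k/2}\phi$ genuinely belongs to $H^{1}_{0}(\Omega)$ so that the magnetic lower bound on $\Omega$ applies — this is exactly where the transmission condition is felt, and it is why the induction is run over \emph{even} $k$ (for odd $k$, $\rho^{1/2}$ has a non-square-integrable gradient near $\partial\Omega$). A secondary point is the uniformity in $\alpha$ of the constants $c_{2},c_{3}$ over the range \eqref{hypoth-alpha}, in particular the inequalities $\alpha<1$ and $\beta(a,m,\alpha)<c_{2}$, both of which rest on $m>1$ (hence $\alpha_{0}(a,m)<1$) and on Theorem~\ref{kach-AsymptAnal}. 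An alternative route, yielding even exponential decay $\int e^{2\tau\sqrt B\,{\rm dist}(x,\partial\Omega)}(|\phi|^{2}+B^{-1}|(\nabla-iB\mathbf F)\phi|^{2})\le C\|\phi\|_{L^{2}(\widetilde\Omega)}^{2}$ — from which \eqref{decay-1}--\eqref{decay-2} follow via $s^{k}\le C_{k}e^{s}$ — is the standard Agmon argument of \cite{FoHe07}, based on an Agmon identity for $e^{\Phi}\phi$ with a truncated weight $\Phi$ satisfying $|\nabla\Phi|\le\tau\sqrt B$ and the same interior/exterior lower bounds; it requires the same care at $\partial\Omega$.
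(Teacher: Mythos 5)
Your overall route is the expected one, but note for comparison that the paper itself gives no proof of this lemma: it is quoted verbatim from \cite{Kach} (Section~6), so there is no in-paper argument to measure your attempt against. The mechanics of your scheme are sound: $\rho^{k}\phi_{B,\alpha}$ is a legitimate test function simply because the form domain of $P[B,\alpha]$ is all of $H^{1}(\widetilde\Omega)$ (no transmission subtlety arises at this level); for even $k\geq 2$ the function $\rho^{k/2}\phi_{B,\alpha}$ does lie in $H^{1}_{0}(\Omega)$, so the Avron--Herbst--Simon bound applies; the base case, the induction arithmetic and the interpolation to odd $k$ are all correct (only a cosmetic point: $\rho$ cannot be $C^{\infty}$ and equal to ${\rm dist}(\cdot,\partial\Omega)$ across $\partial\Omega$, but Lipschitz is enough for everything you do).

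The genuine gap is at the absorption step, which is exactly where the spectral content of the lemma lies. To absorb $\mu_{1}(B,\alpha)\int\rho^{k}|\phi|^{2}\leq(\beta(a,m,\alpha)+g(B))B\int\rho^{k}|\phi|^{2}$ into your good terms you need the \emph{uniform strict} inequalities $\alpha+\beta(a,m,\alpha)\leq 1-c$ (interior bulk) and, with your exterior lower bound (which keeps only $a\alpha B\int_{\widetilde\Omega\setminus\Omega}\rho^{k}|\phi|^{2}$ and discards the zeroth-order gain of the exterior kinetic term), $\beta(a,m,\alpha)\leq a\alpha-c$, for \emph{all} $\alpha$ in the fixed interval \eqref{hypoth-alpha}. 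Neither of your justifications delivers this. First, nothing in the paper guarantees $\alpha_{0}(a,m)+\epsilon_{*}(m)<1$: $\epsilon_{*}(m)$ is defined only through $(\mathcal P_{m,\alpha})$ and the bound $\epsilon_{*}<d_{0}/2$, while $\alpha_{0}(a,m)$ may be arbitrarily close to $1$; so ``$\alpha$ bounded away from $1$'' is unsupported, and in any case the relevant quantity is $\alpha+\beta(a,m,\alpha)$, whose uniform bound below $1$ on the whole interval requires an argument (via $b_{1}$ and the monotonicity of $\alpha\mapsto\alpha+\beta$, or the quantitative analysis of \cite{Kach}), not merely $\alpha_{0}<1$. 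Second, continuity of $\beta(a,m,\cdot)$ together with $\beta(a,m,\alpha_{0})=0$ gives smallness of $\beta$ only on a sufficiently small neighbourhood of $\alpha_{0}$, not on the interval of \emph{fixed} radius $\epsilon_{*}(m)$, where $\beta$ can a priori be of size comparable to $\epsilon_{*}(m)$; for small $a$ the inequality $\beta<a\alpha$ is then not clear at all, and to rescue the exterior absorption you would have to extract the magnetic energy of order $\Theta_{0}B/m$ from $\frac1m\int_{\widetilde\Omega\setminus\Omega}\rho^{k}|(\nabla-iB\mathbf F)\phi|^{2}$ (handling the Neumann boundary $\partial\widetilde\Omega$, where the AHS bound does not apply) and still prove $\beta<\Theta_{0}/m+a\alpha-c$ uniformly. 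These ``bulk energy strictly above the ground-state energy'' inequalities are precisely the nontrivial input supplied by the quoted reference; without them your induction does not close. If one restricts $\alpha$ to a sufficiently small ($a,m$-dependent) neighbourhood of $\alpha_{0}(a,m)$ --- which is all that is used later in the paper --- your continuity argument does suffice and the proof goes through, but that is weaker than the lemma as stated.
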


Under the additional hypotheses (\ref{hypoth-m}) and
(\ref{hypoth-alpha}), i.e. $\alpha$ close to the constant
$\alpha_0(a,m)$ and $m$ strictly larger than the constant $m_*\geq
1$ introduced in \eqref{m*}, we get a two term asymptotic
expansion for the eigenvalue $\mu_1(B,\alpha)$ (see
\cite[Propositions~7.1~\and~7.3]{Kach}).

\begin{thm}\label{kach-thm2}
Assume that $a>0$ and $m$ satisfies (\ref{hypoth-m}). There exist
a constant $B_0>0$ and
a function $[B_0,\infty[\mapsto g(B)\in\mathbb R_+$ satisfying
    $\displaystyle\lim_{B\to\infty} g(B)=0$ such that, if $\alpha$
fulfills (\ref{hypoth-alpha}), then the following asymptotic holds:
$$\left|\mu_1(B,\alpha)-\left(\beta(a,m,\alpha)\,B
+C_1(a,m,\alpha)\,(\kappa_{\rm
  r})_{\rm max}\sqrt{B}\right)\right|\leq g(B)\sqrt{B}\,,
\quad\forall~B\geq B_0\,,$$
where $\beta(a,m,\alpha)$ is introduced in (\ref{beta}),
  $C_1(a,m,\alpha)$
the negative constant introduced in (\ref{C1-b1}),
  $\kappa_{\rm r}$ denotes the scalar curvature of $\partial\Omega$
  and $(\kappa_{\rm r})_{\rm max}$ denotes the maximum of $\kappa_{\rm r}$.
\end{thm}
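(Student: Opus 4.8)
The plan is to establish matching upper and lower bounds on $\mu_1(B,\alpha)$, both equal to $\beta(a,m,\alpha)B+C_1(a,m,\alpha)(\kappa_{\rm r})_{\max}\sqrt B+o(\sqrt B)$ as $B\to\infty$, uniformly for $\alpha$ in the compact range \eqref{hypoth-alpha}. Everything is localized near the interface $\partial\Omega$: since $m>1$, Lemma~\ref{kach-decay} shows that a ground state $\phi_{B,\alpha}$ and its magnetic gradient are, up to errors that are super-polynomially small in $\sqrt B$, supported in a tube $\{{\rm dist}(x,\partial\Omega)\le B^{-1/2+\eta}\}$; in particular the Neumann boundary $\partial\widetilde\Omega$, which lies at positive distance from $\overline\Omega$, does not contribute at this order, and the leading term $\beta(a,m,\alpha)B$ is Proposition~\ref{kach-thm1}. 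In a tubular neighbourhood of $\partial\Omega$ I would introduce boundary coordinates $(s,t)$ with $s$ the arclength along $\partial\Omega$ and $t$ the signed distance (say $t>0$ inside $\Omega$), in which the area element is $a(s,t)\,ds\,dt$ with $a(s,t)=1-t\,\kappa_{\rm r}(s)$, fix a gauge for $B\mathbf F$ adapted to these coordinates, and rescale $t=\tau/\sqrt B$. After these reductions the operator, restricted to a short arc where $\kappa_{\rm r}\approx\kappa_{\rm r}(s_0)$, is the one-dimensional model $H[a,m,\alpha;\xi]$ of \eqref{II-H-} --- the transmission condition $u'(0_+)=\tfrac1m u'(0_-)$ in \eqref{IIdOp} being exactly the trace of the jump of $w_m$ across $\partial\Omega$ --- plus a perturbation of size $\kappa_{\rm r}(s_0)/\sqrt B$ whose expectation in the model ground state $f=f_{\alpha,\xi}^{a,m}$ equals the constant $C_1(a,m,\alpha;\xi)$ of \eqref{II-C1}: the $(t-\xi)^3$ integrals arise from expanding the curved magnetic term, and the interface contribution $-\tfrac12(1-\tfrac1m)|f(0)|^2$ from the Jacobian $a$ multiplying the kinetic energy across $\{t=0\}$, where $w_m$ jumps.

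For the upper bound I would use the quasimode $u_B(s,t)=\chi\!\left(B^{\rho}(s-s_0)\right)\,\eta(t)\,e^{iB\theta(s,t)}\,f_{\alpha,\xi_0}^{a,m}(\sqrt B\,t)$, where $\xi_0=\xi(a,m,\alpha)$ is the unique non-degenerate minimiser supplied by Theorem~\ref{kach-AsymptAnal}(4), $s_0$ is a point where $\kappa_{\rm r}$ attains its maximum, $\theta$ is the gauge phase, $\chi,\eta$ are fixed cut-offs, and $0<\rho<\tfrac14$ so that the tangential localisation costs only $O(B^{2\rho})=o(\sqrt B)$ and $\kappa_{\rm r}(s)=(\kappa_{\rm r})_{\max}+o(1)$ on the support; the Gaussian-type decay of $f$ in $\tau$ makes the truncation $\eta(t)$ harmless. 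Plugging $u_B$ into $Q[B,\alpha]$ and using the coordinate expansion above gives $Q[B,\alpha](u_B)\le\big(\beta(a,m,\alpha)B+C_1(a,m,\alpha)(\kappa_{\rm r})_{\max}\sqrt B+o(\sqrt B)\big)\|u_B\|_{L^2(\widetilde\Omega)}^2$, hence the upper bound on $\mu_1(B,\alpha)$. Freezing $\xi$ at $\xi_0$ rather than re-optimising is legitimate because $\partial_\xi\mu_1(a,m,\alpha;\cdot)$ vanishes there, so the loss is only $O(1/B)$.

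For the lower bound I would introduce a quadratic partition of unity $\sum_j\chi_j(s)^2\equiv1$ along $\partial\Omega$ at scale $B^{-\rho}$, $0<\rho<\tfrac14$, together with a cut-off separating the tube $\{{\rm dist}(x,\partial\Omega)\le B^{-1/2+\eta}\}$ from its complement, apply the IMS localisation formula (tangential error $O(B^{2\rho})=o(\sqrt B)$; the $t$-truncation error super-polynomially small by Lemma~\ref{kach-decay}), and on each arc freeze $\kappa_{\rm r}$ at $\kappa_{\rm r}(s_j)$ up to $O(B^{-\rho})$. Passing to boundary coordinates, rescaling, and bounding the tangential kinetic term from below by zero, each localised piece is controlled below by the one-dimensional model with frozen curvature, yielding $Q[B,\alpha](\chi_j\phi_{B,\alpha})\ge\big(\beta(a,m,\alpha)B+C_1(a,m,\alpha)\kappa_{\rm r}(s_j)\sqrt B-o(\sqrt B)\big)\|\chi_j\phi_{B,\alpha}\|_{L^2(\widetilde\Omega)}^2$. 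Since $C_1(a,m,\alpha)<0$ --- this is part of property $(\mathcal P_{m,\alpha})$, which holds under \eqref{hypoth-m} and \eqref{hypoth-alpha} --- we have $C_1(a,m,\alpha)\kappa_{\rm r}(s_j)\ge C_1(a,m,\alpha)(\kappa_{\rm r})_{\max}$, and summing over $j$ gives the matching lower bound. All constants can be tracked uniformly in $\alpha$ over \eqref{hypoth-alpha}, which produces the stated uniform remainder $g(B)\sqrt B$.

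The step I expect to be the main obstacle is the lower bound, specifically the simultaneous two-scale bookkeeping: one must choose $\rho$ so that the IMS localisation error, the curvature-freezing error, the metric-approximation error, and the error in the rescaled one-dimensional reduction are all genuinely $o(\sqrt B)$, while carrying the magnetic gauge through the curved coordinates without generating cross terms of order $\sqrt B$. The discontinuity of $w_m$ and $V_a$ across $\partial\Omega$ makes this genuinely different from the classical smooth Neumann analysis: the reduction to $H[a,m,\alpha;\xi]$, and in particular the appearance of the interface term $-\tfrac12(1-\tfrac1m)|f(0)|^2$ with the correct coefficient, relies on the transmission-adapted elliptic estimates of \cite{Kach2} rather than on interior regularity. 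Once these localisation and reduction estimates are in place, the identification of the coefficient $C_1(a,m,\alpha)$ is a routine first-order (Feynman--Hellmann) perturbation computation for the family $H[a,m,\alpha;\xi]$, and the non-degeneracy in Theorem~\ref{kach-AsymptAnal}(4) guarantees that the optimisation over $\xi$ is stable under the $\kappa_{\rm r}/\sqrt B$ perturbation.
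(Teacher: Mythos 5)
You should first note that the paper does not actually prove Theorem~\ref{kach-thm2}: it is imported from \cite[Propositions~7.1 and 7.3]{Kach}, and the only related argument carried out in the paper is the disc-case refinement of the appendix (Theorem~\ref{fe-disc}). Your two-sided scheme---a quasimode concentrated at a point of maximal curvature for the upper bound, an IMS partition along $\partial\Omega$ with frozen curvature and reduction to the transmission family $H[a,m,\alpha;\xi]$ for the lower bound---is the standard Helffer--Morame/Fournais--Helffer strategy that underlies the cited proof, and your identification of the curvature coefficient (the $(t-\xi)^3$ integrals plus the jump term $-\tfrac12(1-\tfrac1m)|f(0)|^2$ produced by the Jacobian acting on the discontinuous $w_m$) is consistent with the formal expansion ${\mathfrak h}_0+B^{-1/2}{\mathfrak h}_1+\dots$ used in the appendix, where exactly this boundary term arises from the expectation of $\widetilde w_m\,\frac{d}{d\tau}$ in the model ground state.

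Two steps, as you wrote them, would fail. First, your trial state carries only ``the gauge phase'': if, modulo the cutoff, $u_B$ has no tangential oscillation, then $(D_s-B\tilde A_1)u_B$ produces after rescaling the fibre $\xi=0$ of the model, not $\xi=\xi_0=\xi(a,m,\alpha)$; since property $(\mathcal P_{m,\alpha})$ gives $\xi_0>0$ and the minimum in $\xi$ is unique, $\mu_1(a,m,\alpha;0)>\beta(a,m,\alpha)$ and the upper bound is already wrong at order $B$. You must build in the factor $e^{i\xi_0\sqrt{B}\,s}$ (equivalently, in the disc case, choose the Fourier index $n$ so that $\delta(n,B)$ in \eqref{delta(a,m)} is near its optimum); only then does non-degeneracy of $\xi\mapsto\mu_1(a,m,\alpha;\xi)$ (Theorem~\ref{kach-AsymptAnal}(4)) make the frozen-$\xi$ loss of lower order. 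Second, in the lower bound you propose ``bounding the tangential kinetic term from below by zero''. That term cannot be discarded: $|(D_s-B\tilde A_1)u|^2$ is precisely what generates the potential $(t-\xi)^2$ of $H[a,m,\alpha;\xi]$ after freezing the coefficients on each arc and taking a partial Fourier transform (or minimising over the dual variable). Dropping it leaves only $\alpha B V_a\geq-\alpha B$ on $\Omega$ and yields a lower bound far below $\beta(a,m,\alpha)B$. The correct step is to retain it, reduce each frozen piece to the fibred one-dimensional family, use $\inf_{\xi}\mu_1(a,m,\alpha;\xi)=\beta(a,m,\alpha)$ together with the curvature correction and $C_1(a,m,\alpha)<0$, and estimate only the residual errors from curvature freezing, the metric expansion and the IMS cutoffs at order $o(\sqrt B)$, uniformly in $\alpha$ over \eqref{hypoth-alpha}. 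With these two repairs your outline matches the argument of the cited reference.
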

It should be noticed that there exist negative constants $c_1$ and $c_2$ such
that, for $\alpha$ satisfying (\ref{hypoth-alpha}),
$$c_1\leq C_1(a,m,\alpha)\leq c_2<0\,.$$

\subsection{Proof of Theorem~\ref{thm-monton}: General domains}
We assume in this section that the domain $\Omega$ is not a disc,
still with smooth and compact boundary. Thus, the set
\begin{equation}\label{Pi}
  \Pi=\{x\in \partial\Omega~:~\kappa_{\rm r}(x)= (\kappa_{\rm r})_{\rm
    max}\}\end{equation}
is not identical to $\partial\Omega$, i.e. $\Pi\not=\partial\Omega$.\\
In this case, we can deduce from the asymptotic expansion in
Theorem~\ref{kach-thm2} that any  ground state $\phi_{B,\alpha}$ of
$P[B,\alpha]$ is localized near the set $\Pi$. One weak version of
expressing this rough statement is through the estimate of the
following lemma, see \cite{HeMo1,HePa} for sharper statements.

\begin{lem}\label{HeMo}
  Under the hypotheses of Theorem~\ref{kach-thm2}, for all
  $\epsilon_0>0$ and $N\in\mathbb N$, there exist  constants $B_0>0$
  and $C_N$ such that,
$$\int_{\{{\rm dist}(x,\Pi)\geq\epsilon_0\}}|\phi_{B,\alpha}(x)|^2\, d 
x\leq C_NB^{-N}\,,\quad\forall~B\geq B_0\,.$$
\end{lem}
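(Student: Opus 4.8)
The plan is to combine the two-term asymptotic expansion of Theorem~\ref{kach-thm2} with a standard Agmon-type localization estimate based on an IMS-type formula. The key point is that $\phi_{B,\alpha}$ achieves an energy whose leading behaviour is governed by the maximal curvature $(\kappa_{\rm r})_{\rm max}$, and a ground state that put non-negligible mass in a region where the curvature is bounded away from $(\kappa_{\rm r})_{\rm max}$ would contradict this, since $C_1(a,m,\alpha)$ is strictly negative (so $C_1(a,m,\alpha)(\kappa_{\rm r})_{\rm max}\sqrt B$ is the \emph{most negative} value the curvature correction can produce).

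First I would fix $\epsilon_0>0$ and introduce a smooth partition of unity $\chi_1^2+\chi_2^2=1$ on $\widetilde\Omega$, where $\chi_1$ is supported in $\{\operatorname{dist}(x,\Pi)<\epsilon_0\}$ and $\chi_2$ is supported in $\{\operatorname{dist}(x,\Pi)>\epsilon_0/2\}$, with $|\nabla\chi_j|\leq C/\epsilon_0$. The IMS localization formula gives
\begin{equation*}
Q[B,\alpha](\phi_{B,\alpha})=\sum_{j=1}^2 Q[B,\alpha](\chi_j\phi_{B,\alpha})-\int_{\widetilde\Omega}w_m(x)\big(|\nabla\chi_1|^2+|\nabla\chi_2|^2\big)|\phi_{B,\alpha}|^2\, d  x\,,
\end{equation*}
so that, since $Q[B,\alpha](\phi_{B,\alpha})=\mu_1(B,\alpha)\|\phi_{B,\alpha}\|^2$ and the gradient terms are $O(1)\|\phi_{B,\alpha}\|^2$,
\begin{equation*}
\mu_1(B,\alpha)\|\phi_{B,\alpha}\|^2 \geq Q[B,\alpha](\chi_2\phi_{B,\alpha})-C\|\phi_{B,\alpha}\|^2\,.
\end{equation*}
Next I would lower-bound $Q[B,\alpha](\chi_2\phi_{B,\alpha})$. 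The function $\chi_2\phi_{B,\alpha}$ is a legitimate trial function whose support meets $\partial\Omega$ only in the region $\{\operatorname{dist}(x,\Pi)\geq\epsilon_0/2\}$; by a localized version of the upper-bound/lower-bound analysis behind Theorem~\ref{kach-thm2} (applied on a neighbourhood of $\partial\Omega$ away from $\Pi$, using curvature coordinates), one obtains
\begin{equation*}
Q[B,\alpha](\chi_2\phi_{B,\alpha})\geq \big(\beta(a,m,\alpha)B+C_1(a,m,\alpha)(\kappa_{\rm r})^{\epsilon_0}_{\rm max}\sqrt B - g(B)\sqrt B\big)\|\chi_2\phi_{B,\alpha}\|^2\,,
\end{equation*}
where $(\kappa_{\rm r})^{\epsilon_0}_{\rm max}=\sup\{\kappa_{\rm r}(x):\operatorname{dist}(x,\Pi)\geq\epsilon_0/2\}<(\kappa_{\rm r})_{\rm max}$ by compactness of $\partial\Omega$ and continuity of $\kappa_{\rm r}$; here I use that $C_1<0$ so replacing $(\kappa_{\rm r})_{\rm max}$ by the smaller $(\kappa_{\rm r})^{\epsilon_0}_{\rm max}$ only \emph{raises} the bound. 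Inserting this together with the upper bound $\mu_1(B,\alpha)\leq \beta(a,m,\alpha)B+C_1(a,m,\alpha)(\kappa_{\rm r})_{\rm max}\sqrt B+g(B)\sqrt B$ from Theorem~\ref{kach-thm2}, and writing $\|\chi_2\phi_{B,\alpha}\|^2=\|\phi_{B,\alpha}\|^2-\|\chi_1\phi_{B,\alpha}\|^2$, all the $\beta(a,m,\alpha)B$ terms cancel and one is left with
\begin{equation*}
C_1(a,m,\alpha)\big((\kappa_{\rm r})_{\rm max}-(\kappa_{\rm r})^{\epsilon_0}_{\rm max}\big)\sqrt B\,\|\phi_{B,\alpha}\|^2 \geq \big(C_1(a,m,\alpha)(\kappa_{\rm r})^{\epsilon_0}_{\rm max}-\text{(smaller)}\big)\sqrt B\,\|\chi_1\phi_{B,\alpha}\|^2 - \widetilde g(B)\sqrt B\|\phi_{B,\alpha}\|^2 - C\|\phi_{B,\alpha}\|^2\,.
\end{equation*}
Since the left-hand coefficient $C_1(a,m,\alpha)\big((\kappa_{\rm r})_{\rm max}-(\kappa_{\rm r})^{\epsilon_0}_{\rm max}\big)$ is a strictly negative constant $-c(\epsilon_0)<0$, rearranging gives $\|\chi_1\phi_{B,\alpha}\|^2\geq (1-o(1))\|\phi_{B,\alpha}\|^2$, i.e. $\int_{\{\operatorname{dist}(x,\Pi)\geq\epsilon_0\}}|\phi_{B,\alpha}|^2\, d  x \leq \int(1-\chi_1^2)|\phi_{B,\alpha}|^2\, d  x = o(1)\|\phi_{B,\alpha}\|^2$.

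The above already yields the $o(1)$ concentration; to upgrade to the stated $O(B^{-N})$ decay I would run an Agmon-type bootstrap, multiplying the eigenvalue equation by $e^{2\Phi}\phi_{B,\alpha}$ with a weight $\Phi=\Phi_B$ of the form $\rho\sqrt B\, d(x)$ where $d(x)=\operatorname{dist}(x,\Pi)$ suitably truncated and $\rho>0$ small; the effective potential well created by the curvature term (of depth $\sim c(\epsilon_0)\sqrt B$ outside $\{d<\epsilon_0/2\}$ relative to the ground-state energy) gives exponential decay $\int e^{2\rho\sqrt B\, d(x)}|\phi_{B,\alpha}|^2\leq C\|\phi_{B,\alpha}\|^2$, which in particular implies $\int_{\{d\geq\epsilon_0\}}|\phi_{B,\alpha}|^2\leq C e^{-2\rho\epsilon_0\sqrt B}\|\phi_{B,\alpha}\|^2\leq C_N B^{-N}\|\phi_{B,\alpha}\|^2$ for $B$ large. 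The main obstacle, and the step requiring the most care, is the localized lower bound for $Q[B,\alpha](\chi_2\phi_{B,\alpha})$: one must redo, uniformly in $\alpha$ in the range \eqref{hypoth-alpha} and in the location of the support, the boundary-layer analysis that produces the curvature term $C_1(a,m,\alpha)\sqrt B$, making sure the error $g(B)\sqrt B$ is genuinely $o(\sqrt B)$ and independent of $\epsilon_0$ once $\epsilon_0$ is fixed; this is exactly the kind of semiclassical computation carried out in \cite{HeMo1,HePa} and invoked in \cite{Kach}, so I would quote those references rather than reproduce the details.
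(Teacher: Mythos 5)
Your overall strategy (two-term expansion with $C_1<0$, a localized lower bound on the energy away from $\Pi$, then an Agmon-type weighted estimate to upgrade to super-polynomial decay) is the same as the paper's, and your first step -- the IMS partition combined with a localized version of Theorem~\ref{kach-thm2}, giving $\int_{\{\mathrm{dist}(x,\Pi)\geq\epsilon_0\}}|\phi_{B,\alpha}|^2\,dx=\mathcal O(B^{-1/2})\|\phi_{B,\alpha}\|^2$ -- is sound (the paper achieves the analogous input through a pointwise effective potential $U_B(x)$ containing the term $C_1\kappa_{\rm r}(p(x))\sqrt B$, quoted from \cite{KachTh}, rather than through an IMS cut at scale $\epsilon_0$; this is only a difference of packaging).

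However, your final bootstrap step contains a genuine error: the Agmon weight $\Phi=\rho\sqrt B\,d(x)$ with $d(x)=\mathrm{dist}(x,\Pi)$ and $\rho>0$ fixed cannot work. In the weighted energy identity the localization error is $\int w_m|\nabla\Phi|^2e^{2\Phi}|\phi_{B,\alpha}|^2\sim\rho^2B\int e^{2\Phi}|\phi_{B,\alpha}|^2$, while the energy gap you yourself identify in the boundary region away from $\Pi$ is only of order $c(\epsilon_0)\sqrt B$ (the difference of the curvature terms); since $\rho^2B\gg c(\epsilon_0)\sqrt B$ for large $B$, the error swallows the well depth and no decay can be extracted. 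In fact the claimed intermediate bound $\int e^{2\rho\sqrt B\,d(x)}|\phi_{B,\alpha}|^2\,dx\leq C\|\phi_{B,\alpha}\|^2$ is false in general: the tangential extent of the ground state around $\Pi$ is of order $B^{-1/4}$, so on that set the weight is already of size $e^{cB^{1/4}}\to\infty$. The correct tangential scale, and the one the paper uses, is $\Phi=\delta_0B^{1/4}\,\mathrm{dist}(x,\Pi)$, for which $|\nabla\Phi|^2\sim\delta_0^2\sqrt B$ is dominated by the gap $c(\epsilon_0)\sqrt B$ when $\delta_0$ is small. This repair is harmless for the statement you want, since $e^{-2\delta_0\epsilon_0B^{1/4}}\leq C_NB^{-N}$ for every $N$, so with the exponent corrected your argument closes and coincides in substance with the paper's proof.
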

\begin{proof}[Sketch of Proof of Lemma~\ref{HeMo}.]
  We provide some details concerning the derivation of the estimate of
  Lemma~\ref{HeMo}.
  Using the decay estimates of Lemma~\ref{kach-decay}, one can get the
  lower bound\,\footnote{ This is a consequence of combining
    Proposition~5.3.3 and Theorem~4.3.8 in \cite{KachTh}. More details
    are given also in \cite[pp. 179-180]{KachTh}},
  \begin{equation}\label{pr:dec-cu}
    Q[B,\alpha](\phi_{B,\alpha})\geq
    \int_{\widetilde\Omega}U_{B}(x)|\phi_{B,\alpha}(x)|^2\, d 
    x\,,\end{equation}
  where
$$
U_B(x)=\left\{
  \begin{array}{ll}
    (\gamma-\alpha) B\,,&{\rm if}~{\rm dist}(x,\partial\Omega)\geq
    2B^{-1/6}\,,\\
    \beta B+C_1\kappa_{\rm r}(p(x))B^{1/2}-C_0
    B^{1/3}\,,& {\rm if}~{\rm dist}(x,\partial\Omega)\leq
    2B^{-1/6}\,.
  \end{array}\right.
$$
Here $\gamma\in]0,1[$ is a given constant, $\beta=\beta(a,m,\alpha)$,
$C_1=C_1(a,m,\alpha)$, the constant $C_0>0$
depends on $a>0$, $m>m_*$ and $\gamma$, and $p(x)\in\partial\Omega$ is
defined
by $|p(x)-x|={\rm dist}(x,\partial\Omega)$.\\
On the other hand, it follows from Theorem~\ref{kach-thm2} that
\begin{multline}\label{pr:dec-cu*}
  Q[B,\alpha](\phi_{B,\alpha})\leq \\
  \int_\Omega\left(\beta(a,m,\alpha) B+C_1(a,m,\alpha)(\kappa_{\rm
      r})_{\rm max}B^{1/2}+g(B)B^{1/2}\right)|\phi_{B,\alpha}|^2\, d 
  x\,.
\end{multline}
Combining (\ref{pr:dec-cu}) and (\ref{pr:dec-cu*}), one can prove the
following exponential decay using standard (Agmon-type) techniques,
(see \cite[Theorem~6.5.4]{KachTh} for details)
$$
\left\| \exp\left(\delta_0B^{1/4}{\rm
      dist}(x,\Pi)\right)\phi_{B,\alpha}\right\|
_{L^2(\widetilde\Omega)}\leq
M\|\phi_{B,\alpha}\|_{L^2(\widetilde\Omega)}\,,
$$
where $\delta_0>0$ and $M$ are constants depending on $a$ and $m$. Now
the estimate of the lemma is just a consequence of the above
exponential decay of $\phi_{B,\alpha}$.
\end{proof}

We start now by following ideas of Fournais-Helffer in \cite{FoHe06}
to prove monotonicity of the eigenvalue
\begin{equation}\label{lambda-1}
  \lambda_1(B,\alpha)=\mu_1(B,\alpha)+\alpha B\,.
\end{equation}
Notice that $\lambda_1(B,\alpha)$ is the eigenvalue of the operator
$$\widetilde P[B,\alpha]=P[B,\alpha]+\alpha  B$$
associated with the quadratic form
\begin{equation}\label{qf-B1}
  \phi\mapsto\widetilde Q[B,\alpha](\phi)=Q[B,\alpha](\phi)
  +\alpha B\|\phi\|^2_{L^2(\widetilde \Omega)}\,,
\end{equation}
where $Q[B,\alpha]$ is the quadratic form introduced in (\ref{qf-B}),
and both operators $\widetilde P[B,\alpha]$ and $P[B,\alpha]$ admit
the same ground states. With this point of view, it is more convenient
to adapt the proof of \cite{FoHe06}.

We proceed to prove Theorem~\ref{thm-monton}.
\subsubsection{Gauge transformation}
Using an idea from \cite{FoHe06}, we will apply a gauge transformation
and work with a new magnetic potential $\Ab$ (instead of $\mathbf
F$). In the new gauge, we will have that $|\Ab\phi_{B,\alpha}|$ is
small in the $L^2$-sense as $B\rightarrow \infty$. Notice that by
Lemma~\ref{HeMo} $\phi_{B,\alpha}$ is localized near the set $\Pi$, so
it suffices to find a gauge where $\Ab=0$ on $\Pi$.

To this end, we introduce adapted coordinates near the boundary of
$\Omega$. For a sufficiently small $t_0>0$, we introduce the open set
$$\Omega(t_0)=\{x\in\mathbb R^2~:~{\rm
  dist}(x,\partial\Omega)<t_0\}.$$ Let $s\mapsto\gamma(s)$ be the
arc-length parametrization of $\partial\Omega$ and $\nu(s)$ the unit
inward normal of $\partial\Omega$ at
$\gamma(s)$.\\
When $t_0$ is sufficiently small, the transformation
\begin{align}
  \label{eq:19}
  \Phi:
  \left[-\frac{|\partial\Omega|}2,\frac{|\partial\Omega|}2\right[\,\times
  ]-t_0,t_0[\ni(s,t)\mapsto \gamma(s)+t\nu(s)\in \Omega(t_0)
\end{align}
becomes a difeomorphism whose Jacobian is $|D\Phi|=1-t\kappa_{\rm
  r}(s)$. For $x\in\Omega(t_0)$, we put
$$\Phi^{-1}(x)=(s(x),t(x))$$
and we get in particular that
$$t(x)={\rm dist}(x,\partial\Omega)\quad{\rm in~}\Omega,\quad
t(x)=-{\rm dist}(x,\partial\Omega)\quad{\rm outside~}\Omega\,.$$

Just as in \cite[Lemma~2.5]{FoHe06}, we get the following lemma.
\begin{lem}\label{FoHe-lem-gauge}
  Let $0<\epsilon\leq \frac12\min(t_0,|\partial\Omega|)$,
  $x_0=\Phi(s_0,0)\in\partial\Omega$ and
$$\Omega(\epsilon,s_0)=\{x=\Phi(s,t)\in\widetilde\Omega~:~|t|
\leq\epsilon,~|s-s_0|\geq \epsilon\}\,.$$ Then there exists a function
$\varphi\in C^\infty_0(\mathbb R^2)$ such that $\Ab=\mathbf
F+\nabla\varphi$ satisfies
$$|\Ab(x)|\leq C\,{\rm dist}(x,\partial\Omega)\quad{\rm
  in}~\Omega(\epsilon,s_0)\,,$$ where $C>0$ depends only on $\Omega$.
\end{lem}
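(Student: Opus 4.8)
The plan is to imitate \cite[Lemma~2.5]{FoHe06}: work in the boundary coordinates $(s,t)=\Phi^{-1}(x)$ of \eqref{eq:19}, remove in turn the normal and the tangential parts of the potential by explicit gauge functions, and then localize so that the resulting gauge function is genuinely smooth and compactly supported on $\R^2$. Transferring estimates between the $(s,t)$--variables and $x$ costs only a bounded factor, since $D\Phi$ and its inverse are bounded for $t_0$ small.

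First I would pull back the $1$-form $\omega=\Fb\cdot\md x$ by $\Phi$, writing $\Phi^*\omega=\widetilde a_1(s,t)\,\md s+\widetilde a_2(s,t)\,\md t$ with $\widetilde a_1,\widetilde a_2$ smooth on $[-|\partial\Omega|/2,|\partial\Omega|/2[\,\times\,]-t_0,t_0[$. Since $\curl\Fb=1$ and the Jacobian of $\Phi$ equals $1-t\kappa_{\rm r}(s)$, the identity $\md(\Phi^*\omega)=(1-t\kappa_{\rm r}(s))\,\md s\wedge\md t$ reads
\[
\partial_s\widetilde a_2-\partial_t\widetilde a_1=1-t\kappa_{\rm r}(s)\,.
\]
Adding the gradient of $\widehat\varphi_1(s,t):=-\int_0^t\widetilde a_2(s,t')\,\md t'$ (smooth on the whole patch) removes the $\md t$-component, and the new $\md s$-component, call it $\widetilde b_1$, then satisfies $\partial_t\widetilde b_1=-(1-t\kappa_{\rm r}(s))$, hence
\[
\widetilde b_1(s,t)=\widetilde b_1(s,0)-t+\tfrac12 t^2\kappa_{\rm r}(s)\,,
\]
so that $|\widetilde b_1(s,t)-\widetilde b_1(s,0)|\le C|t|$ with $C$ depending only on $t_0$ and $\sup|\kappa_{\rm r}|$, i.e. only on $\Omega$.

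It remains to remove the boundary value $\widetilde b_1(s,0)$, and here the main obstacle appears: this cannot be done uniformly in $s$ over all of $\partial\Omega$, because $\oint\widetilde b_1(s,0)\,\md s$ equals the flux $\int_\Omega\curl\Fb\,\md x=|\Omega|\neq0$. This topological obstruction is precisely why the point $x_0$ must be excised and why the estimate is only asserted on $\Omega(\epsilon,s_0)$. After a harmless shift of the arclength origin I may assume that $s_0$ corresponds to the endpoints of the parametrizing interval, so that $\{s:|s-s_0|>\epsilon/2\}$ is a genuine interval; on it $\widehat\varphi_2(s):=-\int\widetilde b_1(\sigma,0)\,\md\sigma$ is a well-defined smooth primitive. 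Setting $\widehat\varphi=\widehat\varphi_1+\widehat\varphi_2$ and $\Psi=\widehat\varphi\circ\Phi^{-1}$, the potential $\Fb+\nabla\Psi$ has, in the coordinates $(s,t)$, vanishing normal component and tangential component $-t+\tfrac12 t^2\kappa_{\rm r}(s)$; since the pulled-back Euclidean metric is $(1-t\kappa_{\rm r}(s))^2\,\md s^2+\md t^2$ with $1-t\kappa_{\rm r}$ bounded away from $0$ and from $\infty$ for $t_0$ small, this gives $|\Fb+\nabla\Psi|\le C|t|=C\,{\rm dist}(x,\partial\Omega)$ on $V:=\Phi(\{|s-s_0|>\epsilon/2,\ |t|<t_0\})$.

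Finally I would pick a cut-off $\chi\in C_0^\infty(\R^2)$ with $\chi\equiv1$ on $\Omega(\epsilon,s_0)$ and ${\rm supp}\,\chi\subset V$; this is possible because $\{|s-s_0|\ge\epsilon,\ |t|\le\epsilon\}$ is compactly contained in $\{|s-s_0|>\epsilon/2,\ |t|<t_0\}$, using $\epsilon\le\tfrac12 t_0$. Then $\varphi:=\chi\Psi$, extended by $0$ outside $V$, lies in $C_0^\infty(\R^2)$, and on $\Omega(\epsilon,s_0)$ one has $\nabla\varphi=\nabla\Psi$, so $\Ab=\Fb+\nabla\varphi$ obeys the claimed bound with a constant depending only on $\Omega$. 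Apart from the flux obstruction noted above, the only remaining points requiring care are that $\widehat\varphi_2$ be single-valued on a neighbourhood of $\Omega(\epsilon,s_0)$ and that $\chi\Psi$ stay smooth across $\partial\Omega$ and of compact support, both of which the choices above guarantee.
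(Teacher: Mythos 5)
Your proof is correct, and it follows essentially the same route as the argument the paper defers to (the paper gives no proof, citing \cite[Lemma~2.5]{FoHe06}): pass to boundary coordinates, gauge away the normal component, use that the flux $\int_\Omega\curl\Fb\,dx\neq 0$ obstructs a global primitive but that cutting the boundary circle at $s_0$ makes the tangential boundary trace exact, and then cut off. The only cosmetic point is to take $\chi\equiv 1$ on a slightly enlarged neighbourhood of $\Omega(\epsilon,s_0)$ so that $\nabla\varphi=\nabla\Psi$ there, which your compact-containment remark already permits.
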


\begin{proof}[Proof of Theorem~\ref{thm-monton}]~\\
We have now all the prerequisites needed to apply the argument of
Fournais-Helffer
\cite{FoHe06}. We include the details for the reader's convenience. 

Recall that, for $\epsilon>0$ sufficiently small and
$z\in[B,B+\epsilon[$, we associate to $\lambda_1(z,\alpha)$ an
analytic branch of eigenfunctions $z\mapsto\phi_{z,\alpha}$ such that
$$\widetilde P[z,\alpha]\phi_{z,\alpha}=\lambda_1(z,\alpha)
\phi_{z,\alpha}\quad\forall~z\in[B,B+\epsilon[\,.$$ We may also in
addition assume that
$\|\phi_{z,\alpha}\|^2_{L^2(\widetilde\Omega)}=1$ for all
$z\in[B,B+\epsilon[$.

Since $\Omega$ is not a disc, the set $\Pi$ in (\ref{Pi}) is different
from $\Omega$, and we can find
$\epsilon_0\in\frac12\min(t_0,|\partial\Omega|)$ and
$s_0\in\partial\Omega$ such that
\begin{equation}\label{conseq-genD}
[s_0-2\epsilon_0,s_0+2\epsilon_0]\cap \Pi=\emptyset\,.
\end{equation}
Let $\Ab$ and $\varphi$ be respectively  the vector field and the
real-valued function defined in Lemma~\ref{FoHe-lem-gauge}. Let
$\widehat P[z,\alpha]$ be the self adjoint operator associated to
the quadratic form
$$H^1(\widetilde \Omega)\ni u\mapsto \int_{\widetilde \Omega}\left(
w_m(x)|(\nabla-iz\Ab)u|^2+\alpha z(V_a(x)+1)|u|^2\right) d  x\,$$
where $w_m$ and $V_a$ are introduced in (\ref{w-V}). Notice that the
operators $\widehat P[z,\alpha]$ and $\widetilde P[z,\alpha]$ are
unitary equivalent~: $\widehat P[z,\alpha]u=e^{iz\varphi}\widetilde
P[z,\alpha]e^{-iz\varphi}u$. Hence,
$$[B,B+\epsilon[\,\ni z\mapsto\widehat\phi_{z,\alpha}:=
e^{iz\varphi}\phi_{z,\alpha}$$ is an analytic branch of
eigenfunctions associated to the eigenvalue $\lambda_1(z,\alpha)$ of
the operator $\widehat P[z,\alpha]$.

Calculating,
\begin{align*}
&\partial_B\lambda_{1,+}(B)=\left.\frac{d}{dz}\left(\widehat
Q[B+z,\alpha](\widehat
\phi_{z,\alpha})\right)\right|_{z=0_+}\\
&=\int_{\widetilde\Omega} 2\,w_m(x)\,{\rm
Re}\left\langle-i\Ab\widehat\phi_{B,\alpha},(\nabla-iB\Ab)\widehat\phi_{B,\alpha}\right\rangle
+\alpha\left(V_a(x)+\alpha\right)|\widehat\phi_{B,\alpha}(x)|^2\, d 
x\\
&\qquad\qquad+2\,{\rm Re}\,\,\widehat
Q[B,\alpha]\Big(\frac{ d \,\widehat\phi_{B+z,\alpha}}{ d 
z}\Big|_{z=0_+}\,,\,\widehat\phi_{B,\alpha}\Big),
\end{align*}
where the last term on the r.h.s. above vanishes, since
$\widehat\phi_{z,\alpha}$ is normalized with respect to the $L^2$
norm. Thus, for an arbitrary $\zeta\in\mathbb R\setminus\{0\}$, we
may express $\partial_B\lambda_{1,+}(B)$ in the following way
$$
\partial_B\lambda_{1,+}(B)=\frac{\widehat
Q[B+\zeta,\alpha](\widehat\phi_{B,\alpha})-\widehat
Q[B,\alpha](\widehat\phi_{B,\alpha})}\zeta-\zeta\int_{\widetilde\Omega}|\Ab
\widehat\phi_{B,\alpha}|^2\, d  x,
$$
and invoking the min-max principle we get further when $\zeta>0$
\begin{equation}\label{final-step}
\partial_B\lambda_{1,+}(B)\geq
\frac{\lambda_1(B+\zeta,\alpha)-\lambda_1(B,\alpha)}\zeta-\zeta\int_{\widetilde\Omega}|\Ab\phi_{B,\alpha}|^2\, d 
x.\end{equation} By Lemma~\ref{FoHe-lem-gauge}, we may write,
$$\int_{\widetilde\Omega}|\Ab\phi_{B,\alpha}|^2\, d  x\leq
C\int_{\widetilde\Omega}|{\rm
dist}(x,\partial\Omega)|^2|\phi_{B,\alpha}|^2\, d 
x+\|\Ab\|_{L^\infty(\widetilde\Omega)}^2\int_{\widetilde\Omega\setminus\Omega(s_0,\epsilon_0)}|\phi_{B,\alpha}|^2\, d 
x\,.$$ By our choice of $s_0$ and $\epsilon_0$,
$\widetilde\Omega\setminus\Omega(s_0,\epsilon_0)$ is away from
boundary points  of maximal curvature. Thus,
invoking Lemmas~\ref{decay-1} and \ref{HeMo}, we obtain a constant
$B_0>0$ depending only on $a$ and $m$ such that
$$\int_{\widetilde\Omega}|\Ab\phi_{B,\alpha}|^2\, d  x\leq
CB^{-1}\quad{\rm for~}B\geq B_0\,,$$ and  we implement this last
estimate in (\ref{final-step}). Now, choosing $\zeta=\eta B$ in
(\ref{final-step}) with $\eta\in]0,1[$ being arbitrary, we get from
Proposition~\ref{kach-thm1},
\begin{eqnarray}\label{der+}
\partial_B\lambda_{1,+}(B)&\geq&\beta(a,m,\alpha)+\alpha\\
 &&-\frac{1+\eta}{\eta}g((1+\eta)B)-\frac1\eta g(B)-C\eta\,,
\nonumber\end{eqnarray} where $g$ is independent from $\alpha$, and
$g(B)\to0$ as $B\to\infty$.

Applying the same argument to the left derivative
 $\partial_B\lambda_{1,-}(B)$, we get (the inequality being reversed
 since $z<0$ in this case),
\begin{eqnarray}\label{der-}
\partial_B\lambda_{1,-}(B)&\leq&\beta(a,m,\alpha)+\alpha\\
 &&+\frac{1+\eta}{\eta}g((1+\eta)B)+\frac1\eta g(B)+C\eta\,.
\nonumber\end{eqnarray}
Recall that analytic perturbation theory gives
 $\partial_B\lambda_{1,+}(B)\leq\partial_B\lambda_{1,-}(B)$ for all
 $B$. Therefore, (\ref{der+}) and (\ref{der-}) when combined together yield,
\begin{align*}
\limsup_{B\to\infty}
\left(\sup_{|\alpha-\alpha_0(a,m)|\leq\epsilon_*(m)}
\left|\partial_B\lambda_{1,\pm}(B,\alpha)-\beta(a,m,\alpha)-\alpha\right|\right)
\leq C\eta\,.
\end{align*} 
Taking $\eta\to0_+$ above, we get
\begin{align*}\lim_{B\to\infty}
\left(\sup_{|\alpha-\alpha_0(a,m)|\leq\epsilon_*(m)}
\left|\partial_B\lambda_{1,\pm}(B,\alpha)-\beta(a,m,\alpha)-\alpha\right|\right)=0\,.   
\end{align*}
\end{proof}

\subsection{Proof of Theorem~\ref{monoton-disc}: Disc domains}
In order to handle disc domains, we need a refined asymptotic
expansion of the first eigenvalue $\mu_1(B,\alpha)$ as $B\to\infty$.
Let us introduce some notation. Given $a>0$ and $m>m_*$,
we introduce,
\begin{equation}\label{delta(a,m)}
\delta(n,B)=n-\frac{1}2B-\xi(a,m,\alpha)\sqrt{B}\,.
\end{equation}

\begin{thm}\label{fe-disc}
Assume that $\Omega=D(0,1)$. Given $a>0$ and $m> m_*$, there exist a
constant $B_0>1$ and a function $[B_0,\infty[\ni B\mapsto g(B)$
satisfying $\displaystyle\lim_{B\to\infty}g(B)=0$, and if $\alpha$
satisfies (\ref{hypoth-alpha}),  there exist real constants
    $$\delta_0=\delta_0(a,m,\alpha)\,,\quad \mathcal C_0=
\mathcal C_0(a,m,\alpha)\,,$$
such that, for all $B\geq B_0$, the following expansion holds,
\begin{align}
  \label{eq:18}
  \left|\mu_1(B,\alpha)-\left(\beta(a,m,\alpha)B-C_1(a,m,\alpha)
\sqrt{B}
+C_2(a,m,\alpha)\left(\Delta_B^2+\mathcal C_0\right)
\right)\right| \nonumber \\
\leq g(B)\,.
\end{align}
Here $C_2(a,m,\alpha)$ is introduced in (\ref{C2}) and
$$\Delta_B=\inf_{n\in\mathbb Z}|\delta(n,B)-\delta_0(a,m,\alpha)|\,.$$
\end{thm}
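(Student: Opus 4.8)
The plan is to exploit the rotational symmetry of the disc, reducing $\mu_1(B,\alpha)$ to the infimum over the angular momentum $n\in\mathbb Z$ of a family of one--dimensional radial eigenvalues, and then to carry out a three--term asymptotic analysis of each fiber. Since $\overline{D(0,1)}\subset\widetilde\Omega$, pick $\eta>0$ with $\overline{A_\eta}\subset\widetilde\Omega$, $A_\eta:=\{1-\eta<|x|<1+\eta\}$. By the localization estimates of Lemma~\ref{kach-decay}, applied with $k$ arbitrarily large, any ground state $\phi_{B,\alpha}$ of $P[B,\alpha]$ satisfies $\int_{\widetilde\Omega\setminus A_\eta}|\phi_{B,\alpha}|^2\,dx\leq C_N B^{-N}\|\phi_{B,\alpha}\|^2$ for every $N$, so a standard cut--off/IMS argument yields $\bigl|\mu_1(B,\alpha)-\mu_1^{A_\eta}(B,\alpha)\bigr|\leq C_N B^{-N}$, uniformly for $\alpha$ as in \eqref{hypoth-alpha}, where $\mu_1^{A_\eta}$ is the bottom of $P[B,\alpha]$ on $A_\eta$ with Dirichlet conditions on $\partial A_\eta$. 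In the gauge $\mathbf F=\tfrac12(-x_2,x_1)$ (which is rotation--equivariant, so no magnetic phase is needed), writing $u=\sum_{n}e^{in\theta}v_n(r)$ the operator block--diagonalizes; on the $n$--th block it becomes the Sturm--Liouville operator
\[
\mathcal L_n=-\frac1r\,\partial_r\!\bigl(r\,w_m\,\partial_r\bigr)+w_m\,\frac{1}{r^2}\Bigl(n-\frac{B r^2}{2}\Bigr)^{2}+\alpha B V_a
\]
on $L^2\bigl((1-\eta,1+\eta),r\,dr\bigr)$ with Dirichlet conditions at $r=1\pm\eta$, and $\mu_1^{A_\eta}(B,\alpha)=\inf_{n\in\mathbb Z}\lambda^{(n)}(B,\alpha)$, $\lambda^{(n)}$ being the bottom of $\mathcal L_n$.

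Next, in each channel I rescale about $r=1$, setting $r=1-t/\sqrt B$ (so $t>0$ corresponds to $\Omega$) and flattening the measure by a unitary conjugation. Writing $n=\tfrac B2+\sqrt B\,\sigma_n$, a Taylor expansion to order $B^{-1}$ turns $B^{-1}$ times (the conjugate of) $\mathcal L_n$ into $H[a,m,\alpha;\xi_n]+B^{-1/2}\,\mathcal M_1+B^{-1}\mathcal M_0+\mathcal O(B^{-3/2})$, where $\xi_n$ is affine in $\sigma_n$ and hence runs through an arithmetic progression of step $1/\sqrt B$, and $\mathcal M_1,\mathcal M_0$ are explicit operators (depending on the channel through $\xi_n$, respecting the transmission structure at $t=0$, with coefficients polynomial in $t$) carrying the corrections from $r^{-2}(n-Br^2/2)^2$, from the Jacobian $r\,dr$ and from the conjugation. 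By Theorem~\ref{kach-AsymptAnal} together with \eqref{hypoth-m}--\eqref{hypoth-alpha} the band function $\xi\mapsto\mu_1(a,m,\alpha;\xi)$ attains its minimum $\beta(a,m,\alpha)$ non--degenerately at $\xi(a,m,\alpha)$, with a spectral gap above $\beta(a,m,\alpha)$ bounded below uniformly in $\alpha$; hence Rayleigh--Schr\"odinger perturbation theory — with the weighted control of $\mathcal M_1,\mathcal M_0$ developed in \cite{Kach} — gives, uniformly for $n$ with $|\xi_n-\xi(a,m,\alpha)|$ bounded,
\[
\lambda^{(n)}(B,\alpha)=\beta(a,m,\alpha)\,B+\sqrt B\,e_1(\xi_n)+e_0(\xi_n)+o(1),
\]
where $e_1(\xi(a,m,\alpha))$ coincides with the $\sqrt B$--coefficient of Theorem~\ref{kach-thm2}, and $e_0$ collects the second--order contribution of $\mathcal M_1$ and the first--order contribution of $\mathcal M_0$.

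It then remains to optimize over $n$. Expanding $\mu_1(a,m,\alpha;\cdot)$ and $e_1(\cdot)$ around $\xi(a,m,\alpha)$ and using that $\xi(a,m,\alpha)$ is a minimum of the band function, the $B$-- and $\sqrt B$--terms of $\lambda^{(n)}$ assemble into a quadratic function of the offset of $n$ from the continuous optimal value; completing the square identifies its vertex as the shift $\delta_0(a,m,\alpha)$, its value at the vertex as $\beta(a,m,\alpha)B-C_1(a,m,\alpha)\sqrt B+C_2(a,m,\alpha)\mathcal C_0(a,m,\alpha)+o(1)$, and the curvature of the parabola as the constant $C_2(a,m,\alpha)$ of \eqref{C2} (via the Feynman--Hellmann identity and the orthogonality of $f$ and $\tilde w_m f$). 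A crude lower bound — $\mu_1(a,m,\alpha;\xi)-\beta(a,m,\alpha)\gtrsim(\xi-\xi(a,m,\alpha))^2$ near the minimum and $\gtrsim$ a positive constant away from it — shows that channels with $|\delta(n,B)|$ large satisfy $\lambda^{(n)}-\beta(a,m,\alpha)B\gg\sqrt B$, so only finitely many $n$, namely those with $\delta(n,B)$ closest to $\delta_0$, compete for the infimum; taking the infimum of the parabola over these gives $\inf_n\lambda^{(n)}(B,\alpha)=\beta(a,m,\alpha)B-C_1(a,m,\alpha)\sqrt B+C_2(a,m,\alpha)\bigl(\Delta_B^2+\mathcal C_0\bigr)+o(1)$, with $\Delta_B=\inf_n|\delta(n,B)-\delta_0|$. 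Combined with the first step, this is \eqref{eq:18}.

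The main obstacle is the precision required: this is a three--term expansion with error $o(1)$, so every $O(\sqrt B)$ and $O(1)$ contribution must be computed exactly and controlled uniformly for $\alpha\in[\alpha_0(a,m)-\epsilon_*(m),\alpha_0(a,m)+\epsilon_*(m)]$; in particular the curvature of $\partial\Omega$ enters simultaneously through the magnetic potential $r^{-2}(n-Br^2/2)^2$ and through the Jacobian $r\,dr$, and the new $O(1)$ term genuinely mixes second--order perturbation theory of the band function with the discreteness of the admissible angular momenta. The two delicate points are identifying the parabola's curvature with the specific constant $C_2(a,m,\alpha)$ of \eqref{C2} rather than merely with a positive constant, and controlling the remainder in the fiberwise expansion — where $\mathcal M_1,\mathcal M_0$ have coefficients growing in $t$ — well enough that, after multiplying $B^{-1}\mathcal L_n$ back by $B$, the error in $\lambda^{(n)}$ is truly $o(1)$. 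The reduction to the annulus in the first step is, by contrast, routine given Lemma~\ref{kach-decay}.
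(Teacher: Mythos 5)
Your proposal is correct and follows essentially the same route as the paper's appendix: reduction to an annulus via the decay estimates of Lemma~\ref{kach-decay}, Fourier decomposition in the angular variable, rescaling $t\mapsto\sqrt{B}\,t$ to obtain a family of one-dimensional fiber operators indexed by $\delta(n,B)$, a three-term perturbation expansion of each fiber (the paper does this with an explicit corrected trial state $u_0+B^{-1/2}u_1+B^{-1}u_2$ and the spectral theorem, which is the same computation as your Rayleigh--Schr\"odinger argument), identification of the $O(1)$ coefficient as a quadratic polynomial in $\delta$ with curvature $C_2(a,m,\alpha)=\tfrac12\mu''(\xi)>0$, and finally optimization over $n$ after discarding channels with $|\delta(n,B)|$ large. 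The only cosmetic difference is that the paper keeps the transversal operator $\mathfrak h_0=H[a,m,\alpha;\xi]$ fixed and places the $\delta$-dependence in the perturbations $\mathfrak h_1,\mathfrak h_2$, whereas you let the leading band parameter $\xi_n$ vary with $n$ and Taylor-expand the band function; these are equivalent.
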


The proof of Theorem~\ref{fe-disc} is very close to that
of Theorem~2.5 in \cite{FoHe} and
relies strongly on the fact that the eigenvalue (\ref{mu1(B)}) admits
a non-degenerate minimum in $\xi$. For the convenience of the reader,
we show in the appendix how the proof of \cite{FoHe} gives
Theorem~\ref{fe-disc}.

Recall the eigenvalue $\lambda_1(B,\alpha)$ introduced in
\eqref{lambda-1}. In view of the result of Theorem~\ref{fe-disc}
above, Theorem~\ref{monoton-disc} follows directly as
\cite[Theorem~2.5]{FoHe}. We omit thus the details.

\section{Exponential decay of order parameters}
\label{decay}

The objective of this section is to prove that, for $m>1$, energy
 minimizing order parameters decay exponentially away from the
 boundary. This estimate will be useful for forthcoming works
 dedicated to finer properties of energy minimizers.

The main theorem is the following.

\begin{thm}\label{thm:Agmon}
Assume that $a>0$, $m>1$ and let $b>0$ be a given constant.
There exist positive constants $M$, $C$, $\varepsilon$ and $\kappa_0$
such that, if $(\psi,\Ab)$ is a  solution of \eqref{GL-eq}
 and the
magnetic field verifies
$$\frac{H}\kappa\geq 1+b\,,$$
then
\begin{equation}\label{exponentialdecay}
\int_{\widetilde\Omega}
e^{2\varepsilon\sqrt{\kappa H}\,t(x)}\left(
|\psi|^2+\frac1{\kappa H}\left|(\nabla-i\kappa H\Ab)\psi\right|^2\right)
\, d  x\leq C\int_{\{t(x)\leq\frac{M}{\kappa H}\}}|\psi|^2\, d  x\,.
\end{equation}
Here $t(x):={\rm dist}(x,\partial\Omega)$.
\end{thm}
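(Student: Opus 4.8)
The plan is to prove \eqref{exponentialdecay} by a weighted (Agmon) energy estimate, with the weight turning on away from the inner boundary $\partial\Omega$, following the scheme of Fournais--Helffer for surface superconductivity. One first reduces to the interesting regime: by Theorem~\ref{thm-GP-type} we may assume $H\le C_0\kappa$ (otherwise $\psi\equiv0$ and \eqref{exponentialdecay} is trivial), so $(1+b)\kappa^2\le\kappa H\le C_0\kappa^2$; thus $\kappa^2$ and $\kappa H$ are comparable and there is a gap of order $\kappa H$ between them. After a gauge transformation one may assume $\Ab\in H^1_{\mathrm{div}}(\widetilde\Omega)$ (everything below is gauge invariant). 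The a priori inputs are $\|\psi\|_{L^\infty(\widetilde\Omega)}\le1$ from \eqref{Linfty} and the weak decay $\|\psi\|_{L^2(\widetilde\Omega)}^2\le C/(\kappa(H-\kappa))=O((b\kappa^2)^{-1})$ from Lemma~\ref{lem-BoFo}; feeding the latter into \eqref{curl}, \eqref{L2}, Lemma~\ref{lem-poincare} and the Sobolev embedding $H^1\hookrightarrow L^4$ yields $\|\Ab-\Fb\|_{L^4(\widetilde\Omega)}\le C\|\curl\Ab-1\|_{L^2(\widetilde\Omega)}=O(H^{-1}b^{-3/4}\kappa^{-3/2})$, hence $(\kappa H)^2\|\Ab-\Fb\|_{L^4(\widetilde\Omega)}^2=O(b^{-3/2}\kappa^{-1})=o(1)$, which is exactly what makes the substitution of $\Fb$ for $\Ab$ in the magnetic estimates below harmless.

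Next, set $\Phi(x)=\varepsilon\sqrt{\kappa H}\,(t(x)-t_0)_+$ with $t_0=(\kappa H)^{-1/2}$, a bounded Lipschitz weight with $|\nabla\Phi|\le\varepsilon\sqrt{\kappa H}$ a.e.\ and $\Phi\equiv0$ on $\{t(x)\le t_0\}$. Inserting the admissible test function $\phi=e^{2\Phi}\psi\in H^1(\widetilde\Omega)$ into the weak equation \eqref{eq:15} and using the a.e.\ identity $\mathrm{Re}\big(\overline{\nabla_{\kappa H\Ab}\psi}\cdot\nabla_{\kappa H\Ab}(e^{2\Phi}\psi)\big)=|\nabla_{\kappa H\Ab}(e^{\Phi}\psi)|^2-|\nabla\Phi|^2e^{2\Phi}|\psi|^2$ (which needs only $\psi\in H^1$ and $\Phi\in W^{1,\infty}$, so no extra regularity of $\psi$), one obtains
\[
\int_{\widetilde\Omega}w_m\,|\nabla_{\kappa H\Ab}(e^{\Phi}\psi)|^2\,dx+\kappa^2\int_{\widetilde\Omega}\big(V_a+\mathbf 1_\Omega|\psi|^2\big)e^{2\Phi}|\psi|^2\,dx=\int_{\widetilde\Omega}w_m\,|\nabla\Phi|^2e^{2\Phi}|\psi|^2\,dx.
\]
Discarding the nonnegative quartic term and using $|\nabla\Phi|^2\le\varepsilon^2\kappa H$, it remains to bound the left-hand side below by a positive multiple of $\kappa H\int e^{2\Phi}|\psi|^2$, modulo a term supported in the layer $\{t(x)\le 2t_0\}$.

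For that I would use a partition of unity $\chi_{\mathrm{bd}}^2+\chi_{\mathrm{in}}^2+\chi_{\mathrm{out}}^2=1$ with $\chi_{\mathrm{bd}}$ supported in $\{t(x)\le2t_0\}$, $\chi_{\mathrm{in}}$ in $\{x\in\Omega:t(x)\ge t_0\}$ (compactly inside $\Omega$), and $\chi_{\mathrm{out}}$ in $\{x\in\widetilde\Omega\setminus\overline\Omega:t(x)\ge t_0\}$. The IMS localization error $\sum_j\int w_m|\nabla\chi_j|^2e^{2\Phi}|\psi|^2$ is $\le Ct_0^{-2}\int_{\{t(x)\le2t_0\}}e^{2\Phi}|\psi|^2\le C\kappa H\int_{\{t(x)\le2t_0\}}|\psi|^2$ (since $\Phi\le\varepsilon$ there) and is moved to the right. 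On $\chi_{\mathrm{in}}e^{\Phi}\psi$, extended by zero, one replaces $\Ab$ by $\Fb$ (for which $\int|\nabla_{\kappa H\Fb}u|^2\ge\kappa H\int|u|^2$ exactly), with an $o(\kappa H)$ error thanks to $(\kappa H)^2\|\Ab-\Fb\|_{L^4}^2=o(1)$; since $V_a=-1$ on $\mathrm{supp}\,\chi_{\mathrm{in}}$ the net contribution is $\ge(\kappa H-\kappa^2-o(\kappa H))\|\chi_{\mathrm{in}}e^{\Phi}\psi\|_{L^2}^2\ge\tfrac{b}{2(1+b)}\kappa H\|\chi_{\mathrm{in}}e^{\Phi}\psi\|_{L^2}^2$. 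On $\chi_{\mathrm{out}}e^{\Phi}\psi$, supported where $w_m=1/m$ and $V_a=a>0$, one simply keeps the potential, $\tfrac1m\int|\nabla_{\kappa H\Ab}(\cdot)|^2+a\kappa^2\|\cdot\|_{L^2}^2\ge a\kappa^2\|\cdot\|_{L^2}^2\ge\tfrac{a}{C_0}\kappa H\|\cdot\|_{L^2}^2$, so no magnetic lower bound is needed and the Neumann boundary $\partial\widetilde\Omega$ causes no trouble. On $\mathrm{supp}\,\chi_{\mathrm{bd}}$ one has $e^{2\Phi}\le C$, giving $\ge-\kappa^2\int|\chi_{\mathrm{bd}}e^{\Phi}\psi|^2\ge-C\kappa H\int_{\{t(x)\le2t_0\}}|\psi|^2$, again moved to the right. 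Choosing $\varepsilon=\varepsilon(a,b,m)$ small so that $\varepsilon^2\max(1,1/m)<\tfrac12\min\big(\tfrac{b}{2(1+b)},\tfrac{a}{C_0}\big)$ and $\kappa_0=\kappa_0(a,b,m)$ large enough to absorb the $o(\kappa H)$ and $o(1)$ errors, one gets $\int_{\widetilde\Omega}e^{2\Phi}|\psi|^2\,dx\le C\int_{\{t(x)\le2t_0\}}|\psi|^2\,dx$. Since $e^{2\varepsilon\sqrt{\kappa H}\,t(x)}\le e^{2\varepsilon}e^{2\Phi(x)}$ everywhere and $\le e^{2\varepsilon}$ on $\{t(x)\le t_0\}$, this is the $|\psi|^2$ part of \eqref{exponentialdecay} with $M=2$; the gradient part follows by reinserting $|\nabla_{\kappa H\Ab}(e^{\Phi}\psi)|^2\ge\tfrac12e^{2\Phi}|\nabla_{\kappa H\Ab}\psi|^2-\varepsilon^2\kappa He^{2\Phi}|\psi|^2$ into the weighted identity and dividing by $\kappa H$.

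The step I expect to be most delicate is the boundary-layer bookkeeping. The cutoff $\chi_{\mathrm{bd}}$ is forced to live on the magnetic scale $t_0\sim(\kappa H)^{-1/2}$, so $|\nabla\chi_{\mathrm{bd}}|^2\sim\kappa H$ and the localization error is of genuine size $\kappa H\int_{\{t(x)\le C(\kappa H)^{-1/2}\}}|\psi|^2$; this, together with the $-\kappa^2\int$-term, is what must end up on the right-hand side of \eqref{exponentialdecay}, so the natural width of the layer there is the magnetic length, and extracting a clean constant requires care in the choice of $\Phi$ and of the $\chi_j$. The other point requiring care, and the reason Lemma~\ref{lem-BoFo} is invoked rather than just $\|\psi\|_{L^\infty}\le1$, is that for small $b$ one must know that $(\kappa H)^2\|\Ab-\Fb\|_{L^4}^2=o(1)$, which needs the quantitative decay $\|\psi\|_{L^2}^2=O((\kappa(H-\kappa))^{-1})$; the naive bound $\|\Ab-\Fb\|=O(1/H)$ cannot absorb the factor $(\kappa H)^2$ once $H/\kappa$ is close to $1$. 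All remaining steps are standard IMS/Agmon manipulations.
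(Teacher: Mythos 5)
Your proof is correct and follows essentially the same route as the paper: a weighted energy identity obtained by testing the weak equation \eqref{eq:15} against $e^{2\Phi}\psi$ with the weight cut off on the magnetic length scale $(\kappa H)^{-1/2}$, an interior magnetic lower bound of order $\kappa H$ built from Lemma~\ref{standard}, Lemma~\ref{lem-BoFo} and a Sobolev/diamagnetic argument (the paper packages exactly this as Lemma~\ref{lem:EnergyBelow}, working with $\|\curl\Ab-1\|_{L^2}$ where you use $\|\Ab-\Fb\|_{L^4}$), the exterior region controlled by the positive potential $a\kappa^2$, and absorption for $\varepsilon$ small and $\kappa$ large. The only cosmetic difference is your explicit three-piece IMS partition versus the paper's single weight $f=\chi\,e^{\varepsilon\sqrt{\kappa H}\,t(x)}$; note that, just like the paper's own argument, your estimate produces the right-hand side over a layer of width of order $(\kappa H)^{-1/2}$, i.e.\ the set $\{t(x)\leq M/\sqrt{\kappa H}\}$ rather than the $\{t(x)\leq M/(\kappa H)\}$ written in \eqref{exponentialdecay}.
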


When $m> m_*$ and the magnetic field $H=H_{C_3}(\kappa)-o(1)$,
one should be able to prove a finer localization of $\psi$, near
boundary points with maximal curvature. Actually, an estimate similar
to Lemma~\ref{HeMo} for the linear problem should also be valid for
the solution $\psi$ of the non-linear Ginzburg-Landau problem.
However,
to establish such an estimate will require a rather very technical
work following previous papers \cite{HePa, LuPa}, so that we do not
carry it out. Let us only notice here that---just as for the linear
case---such a localisation estimate is essentially due to the asymptotic
expansion of the first eigenvalue stated in Theorem~\ref{kach-thm2}.

\begin{proof}[Proof of Theorem~\ref{thm:Agmon}]~\\
If $\psi\equiv0$, then the estimate of the theorem is evidently
true. Thus, thanks to Theorem~\ref{thm-GP-type}, we may restrict
ourselves to magnetic fields $H$ satisfying
$$H\leq C\kappa$$
for some sufficiently large positive constant $C$.
 
Let $\widetilde \chi\in C^{\infty}(\R)$ be a non-decreasing function with
\begin{align}
  \label{eq:6}
  \widetilde \chi = 1, \quad \text{ on } \quad [1,\infty), \qquad
  \qquad \widetilde \chi = 0, \quad
  \text{ on } (-\infty, 1/2].
\end{align}
Define $\chi$ on $\widetilde \Omega$ by $\chi(x) =
\widetilde \chi(
\sqrt{\kappa H} t(x))$.
Define furthermore, the weighted localisation function $f$ by
\begin{align}
  \label{eq:8}
  f(x) =  \chi(x)  \exp(\varepsilon \sqrt{\kappa H} t(x))
\end{align}
We calculate, using \eqref{GL-eq}
\begin{align}
  \label{eq:7}
 \int_{\widetilde \Omega} w_m(x) &\Big( |(\nabla - i \kappa H \Ab)
 f \psi |^2 - | \nabla f |^2 |\psi|^2
 \Big) \,dx
 +a\kappa^2\int_{\widetilde\Omega\setminus\Omega}|f\psi|^2\,dx\nonumber \\
&=
\kappa^2\int_{\widetilde \Omega}  \left(| \psi|^2
- |\psi|^4 \right)f^2\,dx\nonumber \\
&\leq \kappa^2 \int_{\Omega} f^2 |\psi|^2 \,dx
\end{align}
Now, using Lemma~\ref{lem:EnergyBelow} below, we can estimate
\begin{align}
  \label{eq:9}
  \int_{\widetilde \Omega} w_m(x) & |(\nabla - i \kappa H \Ab)
 f \psi |^2\,dx \nonumber \\
&= \int_{\Omega}  |(\nabla - i \kappa H \Ab)
 f \psi |^2\,dx
+
m^{-1} \int_{\widetilde \Omega \setminus \Omega}  |(\nabla - i \kappa H \Ab)
 f \psi |^2\,dx \nonumber \\
&\geq
\kappa H \big(1 - C/\sqrt{\kappa H}\big) \int_{\Omega} |f \psi |^2
\,dx\,.
\end{align}
Combining \eqref{eq:7} and \eqref{eq:9} we find
\begin{multline}
  \label{eq:10}
  ( \kappa H \frac{ b}{1+b} - C \sqrt{\kappa H} ) 
\int_{\Omega} | f \psi |^2 \,dx
+a\kappa^2\int_{\widetilde\Omega}|f\psi|^2\,dx
\leq
\int_{\widetilde \Omega}  |\nabla f|^2 |\psi|^2\,dx.
\end{multline}
We estimate the last term
\begin{align}
  \label{eq:11}
  \int_{\widetilde \Omega}  |\nabla f|^2 |\psi|^2\,dx
\leq   2 \varepsilon^2 \kappa H \int_{\widetilde \Omega} |f \psi|^2 \,dx
+
C \kappa H \int_{ \{\sqrt{\kappa H} t(x)\leq 1\}}
|\psi(x) |^2 \,dx.
\end{align}
Therefore we get, choosing $\varepsilon$ sufficiently small and for
$\kappa H$ sufficiently large,
\begin{align}
  \label{eq:12}
  \int_{\widetilde \Omega} |f \psi|^2 \,dx
\leq C \int_{ \{\sqrt{\kappa H} t(x)\leq 1\}}
|\psi(x) |^2 \,dx.
\end{align}
This implies the weighted $L^2$-bound in \eqref{exponentialdecay},
\begin{align}
  \label{eq:14}
  \int_{\widetilde \Omega} e^{2 \varepsilon \sqrt{\kappa H} t}
  |\psi(x)|^2 \, dx \leq C \int_{ \{\sqrt{\kappa H} t(x)\leq 1\}}
|\psi(x) |^2 \,dx.
\end{align}
Inserting \eqref{eq:14} in \eqref{eq:7} (and using the same
considerations) yields the weighted bound on $(\nabla - i \kappa H
\Ab) \psi$.
\end{proof}

\begin{lemma}\label{lem:EnergyBelow}
There exist constants $C_0, C_1$ such that if $(\psi, \Ab)$ is a
solution of \eqref{GL-eq} with $\kappa(H-\kappa) \geq C_0$, then
\begin{align}
  \label{eq:1}
  \| (\nabla - i \kappa H \Ab) \phi \|_{L^2(\Omega)}^2 \geq \kappa H(1
  - \frac{C_1}{\sqrt{H(H-\kappa)}}) \| \phi \|_{L^2(\Omega)}^2,
\end{align}
for all $\phi \in C^{\infty}_0(\Omega)$.
Also,
\begin{align}
  \label{eq:13}
   \| (\nabla - i \kappa H \Ab) \phi \|_{L^2(\widetilde \Omega \setminus \Omega)}^2 \geq \kappa H(1
  - \frac{C_1}{\sqrt{H(H-\kappa)}}) \| \phi \|_{L^2(\widetilde \Omega \setminus \Omega)}^2,
\end{align}
for all $\phi \in C^{\infty}_0(\widetilde \Omega \setminus \Omega)$.
\end{lemma}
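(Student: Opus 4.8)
The plan is to compare, for $\phi$ supported in $\Omega$, the magnetic energy $\|(\nabla-i\kappa H\Ab)\phi\|_{L^2(\Omega)}^2$ with the one for the constant field $\curl\Ab\approx 1$, and to control the discrepancy through the $L^2$-smallness of $\curl\Ab-1$, which follows from \eqref{curl} in Lemma~\ref{standard} combined with the weak decay estimate of Lemma~\ref{lem-BoFo}. Since $\widetilde\Omega\setminus\Omega$ plays in \eqref{eq:13} exactly the role that $\Omega$ plays in \eqref{eq:1} (and $\curl\Ab-1$ is controlled in $L^2$ of all of $\widetilde\Omega$), I will only treat \eqref{eq:1}; \eqref{eq:13} is obtained by repeating the computation with $\widetilde\Omega\setminus\Omega$ in place of $\Omega$.

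First I would use the standard lower bound for two-dimensional magnetic operators (the Pauli factorization, see e.g. \cite{AHS} or \cite{FoHe07}): for $\phi\in C_0^\infty(\Omega)$, with no boundary term since $\phi$ has compact support,
$$\int_\Omega|(\nabla-i\kappa H\Ab)\phi|^2\,dx\;\ge\;\kappa H\int_\Omega(\curl\Ab)|\phi|^2\,dx\;=\;\kappa H\|\phi\|_{L^2(\Omega)}^2+\kappa H\int_\Omega(\curl\Ab-1)|\phi|^2\,dx\,.$$
The remainder is estimated by H\"older's inequality, the two-dimensional Gagliardo--Nirenberg inequality and the diamagnetic inequality $|\nabla|\phi||\le|(\nabla-i\kappa H\Ab)\phi|$:
$$\Big|\int_\Omega(\curl\Ab-1)|\phi|^2\,dx\Big|\le\|\curl\Ab-1\|_{L^2(\widetilde\Omega)}\,\|\phi\|_{L^4(\Omega)}^2\le C\,\|\curl\Ab-1\|_{L^2(\widetilde\Omega)}\,\|\phi\|_{L^2(\Omega)}\,\|(\nabla-i\kappa H\Ab)\phi\|_{L^2(\Omega)}\,.$$
Setting $X=\|(\nabla-i\kappa H\Ab)\phi\|_{L^2(\Omega)}$, $Y=\|\phi\|_{L^2(\Omega)}$ and $\gamma=C\kappa H\|\curl\Ab-1\|_{L^2(\widetilde\Omega)}$, these two estimates give $X^2\ge\kappa H\,Y^2-\gamma XY$; solving this quadratic inequality for $X/Y$ yields $X^2\ge(\kappa H-c\,\gamma\sqrt{\kappa H})Y^2$, which is the asserted bound provided $\gamma\le\sqrt{\kappa H}$ and $\gamma\sqrt{\kappa H}\le C_1\,\kappa H/\sqrt{H(H-\kappa)}$, i.e.\ $\gamma\le C_1\sqrt{\kappa/(H-\kappa)}$.

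To control $\gamma$ I would invoke \eqref{curl}, $H\|\curl\Ab-1\|_{L^2(\widetilde\Omega)}\le C\|\psi\|_{L^2(\Omega)}\|\psi\|_{L^4(\widetilde\Omega)}$, together with Lemma~\ref{lem-BoFo} (applicable since $\kappa(H-\kappa)\ge C_0$, with $C_0$ chosen at least as large as the constant in that lemma). Using also $\|\psi\|_{L^\infty}\le 1$, Lemma~\ref{lem-BoFo} gives $\|\psi\|_{L^2(\Omega)}\le C(\kappa(H-\kappa))^{-1/2}$ and $\|\psi\|_{L^4(\widetilde\Omega)}^4\le\|\psi\|_{L^2(\widetilde\Omega)}^2\le C(\kappa(H-\kappa))^{-1}$, whence $\|\curl\Ab-1\|_{L^2(\widetilde\Omega)}\le C\,H^{-1}(\kappa(H-\kappa))^{-3/4}$ and $\gamma\le C\,\kappa^{1/4}(H-\kappa)^{-3/4}$. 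A direct computation shows that, using $H\ge\kappa$, both requirements on $\gamma$ from the previous step reduce to making $(\kappa(H-\kappa))^{-1}$ small; hence they hold once $C_0$ is large enough, and substituting then produces exactly \eqref{eq:1} after renaming constants. The estimate \eqref{eq:13} follows verbatim.

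The only delicate point — the expected main obstacle — is obtaining a sufficiently strong bound on $\|\curl\Ab-1\|_{L^2(\widetilde\Omega)}$. Using \eqref{curl} with merely $\|\psi\|_{L^\infty}\le1$ yields only $\|\curl\Ab-1\|_{L^2(\widetilde\Omega)}\le C/H$, hence $\gamma\le C\kappa$, which is useful only in the range $\kappa(H-\kappa)\lesssim1$ — the opposite of the hypothesis. The point is therefore to feed in the $L^2$-smallness of $\psi$ provided by Lemma~\ref{lem-BoFo}, and this is precisely where the quantitative assumption $\kappa(H-\kappa)\ge C_0$ (rather than just $H>\kappa$) enters.
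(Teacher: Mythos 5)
Your proposal is correct and follows essentially the same route as the paper: the pointwise AHS/Pauli lower bound for compactly supported $\phi$, control of the error term $\kappa H\|\curl\Ab-1\|_{L^2}\|\phi\|_{L^4}^2$ via \eqref{curl}, Lemma~\ref{lem-BoFo} and the diamagnetic inequality, and absorption of the gradient contribution. The only (immaterial) differences are that you use the optimized Gagliardo--Nirenberg form of the Sobolev step and a quadratic-inequality argument where the paper uses the parametrized inequality $\|\phi\|_4^2\le C(\eta\|\nabla|\phi|\,\|_2^2+\eta^{-1}\|\phi\|_2^2)$ with $\eta=1/\sqrt{\kappa H}$, and your bound on $\|\curl\Ab-1\|_{L^2}$ is slightly sharper than the one the paper records.
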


\begin{proof}
We only prove \eqref{eq:1} the proof of \eqref{eq:13} being identical.

We estimate, using the compact support of $\phi$ and the standard
magnetic estimate from \cite[Thm 2.9]{AHS} (or \cite[Lemma~2.4.1]{FoHe07})
\begin{align}
  \label{eq:2}
   \| (\nabla - i \kappa H \Ab) \phi \|_{L^2(\Omega)}^2
&\geq \kappa H \int_{\Omega} \curl \Ab |\phi|^2\,dx\nonumber \\
&\geq \kappa H \| \phi \|_2^2 -(\kappa H) \| \curl \Ab - 1 \|_2 \| \phi \|_4^2.
\end{align}
By Lemma~\ref{standard} and the weak decay estimate of
Lemma~\ref{lem-BoFo}, we have
\begin{align}
  \label{eq:3}
  \| \curl \Ab - 1 \|_2 \leq \frac{C}{H \sqrt{\kappa(H-\kappa)}}.
\end{align}
Furthermore, by the Sobolev inequality and scaling followed by the
diamagnetic inequality, we find
\begin{multline}
  \label{eq:4}
  \| \phi \|_4^2 \leq C_{\rm Sob} \Big( \eta \| \nabla |\phi| \|_2^2 +
  \eta^{-1} \| \phi \|_2^2 \Big) \\
\leq  C_{\rm Sob} \Big( \eta \| (\nabla - i \kappa H \Ab) \phi \|_2^2
+ \eta^{-1} \|\phi \|_2^2 \Big),
\end{multline}
where $C_{\rm Sob}$ is a universal constant and $\eta>0$ is a
parameter that we can choose freely. We make the choice $\eta =
1/\sqrt{\kappa H}$. Combining \eqref{eq:2}, \eqref {eq:3} and
\eqref{eq:4} yields
\begin{align}
  \label{eq:5}
  \big(1 + \frac{C}{\sqrt{H(H-\kappa)}}\big)\| (\nabla - i \kappa H \Ab) \phi
  \|_{L^2(\Omega)}^2
  \geq
\kappa H \big(1 - \frac{C}{\sqrt{H(H-\kappa)}}\big) \| \phi \|_{L^2(\Omega)}^2,
\end{align}
from which \eqref{eq:1} follows.
\end{proof}

\section*{Acknowledgements}
The authors were supported by the European Research Council under the European Community's
Seventh Framework Programme (FP7/2007-2013)/ERC grant agreement
n$^{\rm o}$ 202859. SF is also supported by the Danish
Research Council and the Lundbeck Foundation.

\appendix
\section{Improved eigenvalue estimate for the disc}

The aim of this appendix is to prove Theorem~\ref{fe-disc}.
By assumption,
$\Omega=D(0,1)$ and $D(0,1+r)\subset\widetilde\Omega$ for some $r>0$.

Let $D(t) = \{ x \in {\mathbb R}^2 \,: \, |x| \leq t\}$
be the disc with radius $t$.
Let $\widetilde{Q}_B$
be the quadratic form
$$
\widetilde{Q}_B[u] = \int_{D(1+r) \setminus D(\frac{1}{2})}
\left(w_m(x)\big|(\nabla -iB\mathbf{F})u\big|^2+\alpha V_a(x)|u|^2\right)\,dx\;,
$$
with domain
$\{u \in H^1(D(1+r) \setminus D(\frac{1}{2})) \,| \, u(x) = 0 \text{ on
} |x|=\frac{1}{2}~{\rm and}~|x|=1+r  \}$. Here $w_m$ and $V_a$ are as in (\ref{w-V}), and
we emphasize that, for the sake of simplicity,
we omit the dependence on $a$, $m$ and $\alpha$ from the notation.

Let $\widetilde\mu_1(B,\alpha)$  be the lowest eigenvalue
of the corresponding self-adjoint operator.
Using the variational principle and the decay of the
ground state (Lemma~\ref{kach-decay}),  we see that,
\begin{align}
\label{eq:trekant}
\mu_1(B,\alpha) = \tilde{\mu}_1(B,\alpha) + {\mathcal O}(B^{-\infty})\;.
\end{align}

So, it is sufficient to prove \eqref{eq:18} with $\mu_1(B,\alpha)$
replaced by $\tilde{\mu}_1(B,\alpha)$.

By changing to boundary coordinates $(s,t)$ (defined by \eqref{eq:19})
the quadratic form $\widetilde{Q}_B[u]$ becomes,
\begin{align}
\widetilde{Q}_B[u] &= \int_0^{2\pi}  \int_{-r}^{1/2}
\widetilde w_m(t)\left[(1-t)^{-2}| (D_s - B \tilde{A}_1)u|^2+
|D_t u|^2\right](1-t) d  t ds\\
&\hskip0.5cm+
\int_0^{2\pi}  \int_{-r}^{1/2} \alpha \widetilde V_a(t)(1-t)|u|^2\, d
t ds\;, \\
\| u \|_{L^2}^2 &=  \int_0^{2\pi} \int_{-r}^{1/2} \,(1-t) |u|^2\,dtds\;,
\quad \tilde{A}_1 = \tfrac{1}{2} - t + \tfrac{t^2}{2}\;.\nonumber
\end{align}
Here
\begin{equation}\label{w-V*}
\widetilde w_m(t)=\left\{\begin{array}{l}
1\,,\quad{\rm if~}t>0\\
\displaystyle\frac1m\quad{\rm if~}t<0
\end{array}\right.
\quad \widetilde V_a(t)=\left\{
\begin{array}{l}
-1\,,\quad{\rm if~}t>0\\
a\,,\quad{\rm if ~}t<0\,.
\end{array}\right.
\end{equation}
Performing the scaling $\tau = \sqrt{B} t$ and decomposing in Fourier modes,
we find
\begin{align}
\label{eq:stjerne}
\tilde\mu_1(B,\alpha) = B \inf_{n \in {\mathbb Z}} e_{\delta(n,B), B}\;.
\end{align}
Here the function $\delta(m, B)$ was defined in \eqref{delta(a,m)}
and $e_{\delta,B}$ is the lowest eigenvalue of the quadratic form
$q_{\delta,B}$ on $L^2((-\sqrt{B}r,
\sqrt{B}/2);(1-\sqrt{B}\tau)d\tau)$ (with Dirichlet condition, $u(\tau)=0$, at
$\tau=-\sqrt{B}\,r$ and $\tau=\sqrt{B}/2$),
\begin{multline}
q_{\delta,B}[\phi]= \\
\int_{-\sqrt{B}r}^{\sqrt{B}/2}
\widetilde w_m(\tau)\left[(1-\tfrac{\tau}{\sqrt{B}})^{-1}
\big( (\tau-\xi ) +B^{-\frac{1}{2}}
(\delta -\tfrac{\tau^2}{2}) \big)^2\right.
+\left.(1-\tfrac{\tau}{\sqrt{B}}) |\phi'(\tau)|^2\right]
\, d \tau\\+
\alpha\int_{-\sqrt{B}r}^{\sqrt{B}/2}(1-\tfrac{\tau}{\sqrt{B}})\widetilde
V_a(t)|\phi(\tau)|^2\,
 d \tau\;.
\end{multline}
Here, $\xi =\xi(a,m,\alpha)$ by convention (this makes sense
provided that $m>m_*$, and
$\alpha\in[\alpha_0(a,m)-\epsilon_*(m),\alpha_0(a,m)+\epsilon_*(m)]$).
We will only consider $\delta$ varying in a fixed bounded set.
This is justified since it follows from
\cite[Proposition~4.6]{Kach} that for all $C>0$ there exists $D>0$
such that if $|\delta| > D$ and $B>D$, then
$$
e_{\delta,B} \geq \beta(a,m,\alpha)+C_1(a,m,\alpha) B^{-\frac{1}{2}} + CB^{-1}\;.
$$
Furthermore,  for
$\delta$ varying in a fixed bounded set,
we know (from the analysis of the operator (\ref{II-H-}),
especially that the minimum of (\ref{IIpVP}) in $\xi$ is non-degenerate)
that there exists a $d>0$ such that if $B>d^{-1}$, then
the spectrum of $q_{\delta,B}$ contained in $]-\infty, \beta(a,m,\alpha)+d[$
consists of exactly one simple eigenvalue.

The self-adjoint  operator
${\mathfrak h}(\delta,B)$ associated to
$q_{\delta,B}$ (on the space \break$L^2((-\sqrt{B}r,
\sqrt{B}/2);(1-\sqrt{B}\tau)d\tau)$) is
\begin{multline}
{\mathfrak h}(\delta,B)=
-(1-\tfrac{\tau}{\sqrt{B}})^{-1} \frac{d}{d\tau} \widetilde w_m(\tau)
(1-\tfrac{\tau}{\sqrt{B}}) \frac{d}{d\tau}\\
+\widetilde w_m(\tau)
\left[(1-\tfrac{\tau}{\sqrt{B}})^{-2} \big( (\tau-\xi )
  +B^{-\frac{1}{2}}(\delta -\tfrac{\tau^2}{2}) \big)^2\right]+\alpha
\widetilde V_a(\tau)\;.
\end{multline}
We will write down an explicit test function for ${\mathfrak
  h}(\delta,B)$ in \eqref{eq:trial} below, giving $e_{\delta,B}$ up to
an error of order ${\mathcal O}(B^{-\frac{3}{2}})$ (locally uniformly
in $\delta$).

We can formally develop ${\mathfrak h}(\delta,B)$ as
$$
{\mathfrak h}(\delta,B) = {\mathfrak h}_0 + B^{-\frac{1}{2}} {\mathfrak h}_1 + B^{-1} {\mathfrak h}_2 + {\mathcal O}(B^{-\frac{3}{2}})\;.
$$
with
\begin{align}
\label{eq:hs}
{\mathfrak h}_0 &= -\frac{d}{d\tau}\widetilde w_m(\tau)\frac{d}{d\tau} +\widetilde w_m(\tau)
  (\tau-\xi )^2+\alpha \widetilde V_a(\tau)\quad(=H[a,m,\alpha;\xi ])\;, \nonumber\\
{\mathfrak h}_1 &=\widetilde w_m(\tau)
\left[\frac{d}{d\tau}
+ 2 (\tau-\xi ) (\delta -\tfrac{\tau^2}{2})
+ 2\tau (\tau-\xi )^2\right]\;,\nonumber\\
{\mathfrak h}_2 &= \widetilde w_m(\tau)\left[
\tau\frac{d}{d\tau} +  (\delta -\tfrac{\tau^2}{2})^2
+ 4 \tau (\tau-\xi ) (\delta -\tfrac{\tau^2}{2})
+ 3 \tau^2 (\tau-\xi )^2\right]\;.
\end{align}
Let $u_0$ be the known ground state eigenfunction of
$H[a,m,\alpha;\xi ]$ with eigenvalue $\beta(a,m,\alpha)$.
Here, by $H[a,m,\alpha;\xi ]$, we mean the operator (\ref{II-H-})
with $\xi=\xi(a,m,\alpha)$, considered as a self-adjoint operator on
$L^2({\mathbb R}; d\tau)$. For ease of notation we will write
${\mathfrak h}_0$ instead of $H[a,m,\alpha;\xi ]$, since they are the
same formal differential operators.
Let $R_0$ be the regularized resolvent of ${\mathfrak h}_0$, which is defined by
$$
R_0 \phi = \begin{cases}({\mathfrak h}_0 -\beta(a,m,\alpha))^{-1}
  \phi\;, & \int \phi(\tau) u_0(\tau)\,d\tau = 0\;, \\ \quad 0\;, &
\phi \parallel u_0\,.
\end{cases}
$$
Let $\lambda_1$ and $\lambda_2$ be given by
\begin{align}
\lambda_1 &:= \langle u_0 \,|\,{\mathfrak h}_1 u_0 \rangle_{L^2({\mathbb R}; d\tau)}\;, &\nonumber\\
\lambda_2 &:= \lambda_{2,1} + \lambda_{2,2}\;,&\nonumber\\
\lambda_{2,1}&:= \langle u_0 \,|\,{\mathfrak h}_2 u_0 \rangle_{L^2({\mathbb R}; d\tau)} \;,&
\lambda_{2,2}&:= \langle u_0 \,|\,({\mathfrak h}_1 - \lambda_1) u_1 \rangle_{L^2({\mathbb R}; d\tau)} \;,
\end{align}
%
The functions $u_1, u_2$ are given as
\begin{align}
u_1 &= - R_0 ({\mathfrak h}_1 - \lambda_1) u_0\;,
&
u_2 &=- R_0 \big\{ ({\mathfrak h}_1 - \lambda_1) u_1
+ ({\mathfrak h}_2 - \lambda_2)u_0 \big\}\;.
\end{align}
Using the same type of argument in
\cite[Proposition~II.10]{Kach1} or \cite[Lemma~A.5]{FoHe04}, we can
prove that $R_0$ preserves exponential decay at infinity, i.e.
$u_0(t)$, $u_1(t)$, $u_2(t)$ and their derivatives decay
exponentially fast as $|t|\to\infty$.

Let $\chi \in C_0^{\infty}({\mathbb R})$ be a usual cut-off function, such that
\begin{align}
\chi(t) &= 1 \quad \text{ for } |t|\leq \tfrac{1}{8} \;, &
{\rm supp} \chi &\subset [-\tfrac{1}{4}, \tfrac{1}{4}]\;,
\end{align}
and let $\chi_B(\tau) = \chi(\tau B^{-\frac{1}{4}})$\,.\\
We define the following trial state,
\begin{align}
\label{eq:trial}
\psi := \chi_B \big\{ u_0 + B^{-\frac{1}{2}} u_1 + B^{-1} u_2 \big\}\;.
\end{align}
Using  the
exponential decay of the involved functions, we get after a
calculation,
\begin{align}
\big\| \big\{{\mathfrak h}(\delta,B) - \big(\beta(a,m,\alpha)
+ \lambda_1 B^{-\frac{1}{2}} + &\lambda_2 B^{-1}\big)\big\}
\psi \big\|_{L^2(]-\sqrt{B}/2, \sqrt{B}/2[;(1-\sqrt{B}\tau)d\tau)}\\
& = {\mathcal O}(B^{-\frac{3}{2}})\;,\nonumber
\end{align}
\begin{equation}
\| \psi \|_{L^2([-\sqrt{B}/2, \sqrt{B}/2[;(1-\sqrt{B}\tau)d\tau)} = 1 +{\mathcal O}(B^{-\frac{1}{2}})\;,
\end{equation}
where the constant in ${\mathcal O}$ is uniform for $\delta$ in bounded sets.
Applying the spectral theorem, and noticing that $\beta(a,m,\alpha)$
is an isolated eigenvalue for the operator $\mathfrak h_0$, we deduce
that (uniformly for $\delta$ varying in bounded sets),
\begin{align}
e_{\delta, B} = \beta(a,m,\alpha) + \lambda_1 B^{-\frac{1}{2}} + \lambda_2 B^{-1} +{\mathcal O}(B^{-\frac{3}{2}})\;.
\end{align}
It remains to calculate $\lambda_1, \lambda_2$ and, in particular,
deduce their dependence on $\delta$.

Writing,
$$2(\tau-\xi)\left(\delta-\frac{\tau^2}2\right)
+2\tau(\tau-\xi)^2=(\tau-\xi)^3-(\xi^2+2\delta)(\tau-\xi)\,,$$
and using
$$\int_{-\infty}^{\infty}
\widetilde w_m(\tau)(\tau-\xi ) u_0^2 \,d\tau = 0\,,$$
we get,
$$\lambda_1=C_1(a,m,\alpha)\,,$$
where $C_1(a,m,\alpha)$ is introduced in (\ref{II-C1}). In particular,
$\lambda_1$ is independent of $\delta$.

We do not need to calculate $\lambda_2$ 
explicitly. Notice that $\lambda_2(\delta)$ is a quadratic 
polynomial as a function of $\delta$.
We find the coefficient to $\delta^2$ as equal to,
$$ \int_{\mathbb R}\widetilde w_m(\tau)|u_0(\tau)|^2\, d \tau - 4 I_2\,,$$ with
\begin{align}
\label{eq:I2}
I_2 := \langle u_0 \, , \, \widetilde w_m(\tau)
(\tau - \xi ) R_0 \widetilde w_m (\tau - \xi ) u_0 \rangle\;.
\end{align}
Therefore, there exist constants $\delta_0, C_0 \in {\mathbb R}$ such that
$$
\lambda_2 = \left(\int_{\mathbb R}\widetilde
w_m(\tau)|u_0(\tau)|^2\, d \tau-4I_2\right)\,
\big( (\delta - \delta_0)^2 + C_0\big)\;.
$$
Recalling the definition of the constant $C_2(a,m,\alpha)$ in
\eqref{C2}, the above formula becomes,
$$
\lambda_2 = C_2(a,m,\alpha)\,
\big( (\delta - \delta_0)^2 + C_0\big)\;.
$$
In light of
\eqref{eq:trekant} and
\eqref{eq:stjerne}, we need only to show that $C_2(a,m,\alpha)>0$ to
finish the  proof  Theorem~\ref{fe-disc}.

Notice that we work under the hypothesis $m\geq m_*$. This implies that
$\xi$ is the unique, non-degenerate minimum of the function (see \eqref{IIpVP}
and \eqref{m*})
$$z\mapsto\mu(z):=\mu_1(a,m,\alpha;z)\,.$$
In particular,
$$\mu''(\xi)>0\,.$$
Now, exactly as shown in \cite[Proposition~A.3]{FoHe06}, it holds that
\begin{equation}\label{eq-c2=mu''}
C_2(a,m,\alpha)=\frac12\mu''(\xi)\,,\end{equation}
yielding thus the desired property regarding the sign of
$C_2(a,m,\alpha)$. 
This finishes the proof of Theorem~\ref{fe-disc}.\hfill$\Box$\\

For the sake of the reader's convenience, we include 
some details concerning the derivation of \eqref{eq-c2=mu''}.

\begin{proof}[Sketch of the proof of \eqref{eq-c2=mu''}.]
Let us introduce,
$$E(z)=\mu(z+\xi)\,,\quad H(z)=-\frac{d}{d\tau}\widetilde
w_m\frac{d}{d\tau}+\widetilde w_m(t)(t-\xi-z)^2+\alpha V_a(t)\,,$$
together with an analytic family of eigenfunctions $z\mapsto\phi(z)\in L^2(\R)$ such that
$$\|\phi(z)\|^2_{L^2(\R)}=1\,,\quad H(z)\phi(z)=E(z)\phi(z)\,,\quad
\phi(0)=u_0\,.$$
By differentiating the identity $\|\phi(z)\|^2=1$ twice with respect
to $z$, we get
\begin{equation}\label{eq-app-|phi|'(0)}
2{\rm Re}\langle \phi'(0)\,,\,u_0\rangle=0\,,\quad
{\rm Re}\langle \phi''(0)\,,\,u_0\rangle=-\|\phi'(0)\|^2
\,.
\end{equation}
Differentiating the relation $H(z)\phi(z)=E(z)\phi(z)$ we get since
$E(z)$ is minimal for $z=0$,
$$\left(H(0)-E(0)\right)\phi'(0)=2\widetilde w_m(t)(t-\xi)u_0\,.$$
Since the functions $\widetilde w_m(t-\xi)u_0$ and $u_0$ are
orthogonal in $L^2(\R)$, we get
\begin{equation}\label{eq-app-phi'(0)}
\phi'(0)=2\left(H(0)-E(0)\right)^{-1}(\widetilde
w_m\,(t-\xi)u_0)+c\,u_0\,,
\end{equation}
for some constant $c\in i\R$.

Differentiating twice the relation 
$E(z)=\langle \phi(z)\,,\,H(z)\phi(z)\rangle$, we get,
\begin{multline}\label{eq-app-e''(0)}
E''(0)=2E(0){\rm Re}\langle \phi'(0)\,,\,u_0\rangle-8{\rm Re}
\langle \phi'(0)\,,\,\widetilde w_m\,(t-\xi)u_0\rangle
\\+2{\rm Re}\langle\phi'(0)\,,\,H(0)\phi'(0)\rangle+2\langle
u_0\,,\,\widetilde w_m\,u_0\rangle\,.\end{multline}
Substituting \eqref{eq-app-|phi|'(0)} and \eqref{eq-app-phi'(0)} in
\eqref{eq-app-e''(0)}, we get,
$$E''(0)=2\left(\int_R\widetilde w_m(\tau)
  |u_0(\tau)|^2\,d\tau-4I_2\right)
$$
with $I_2$ introduced in \eqref{eq:I2}. Recalling the definition of
the constant $C_2(a,m,\alpha)$ we get the desired relation in 
\eqref{eq-c2=mu''}.
\end{proof}

\end{document}